\documentclass[a4paper,11pt,english,reqno]{amsart}
\usepackage{babel}
\usepackage{tikz}
\usepackage{hyperref}
\usepackage{here}
\usepackage{amssymb}
\usepackage{stmaryrd}
\usepackage{pdflscape}
\usepackage[graphicx]{realboxes}
\usepackage[figuresright]{rotating}
\textwidth=365pt

\usetikzlibrary{arrows,decorations.markings,positioning}
\tikzset{inner sep=0pt, node distance=5mm,
  root/.style={circle,draw,minimum size=5pt,thick},
  broot/.style={circle,draw,minimum size=5pt,thick,fill},
  xroot/.style={circle,draw,minimum size=5pt,thick,fill=gray!70!white},
  crossroot/.style={circle,draw,minimum size=5pt,thick,label=below:$\times$},
  doublearrow/.style={postaction={decorate},   decoration={markings,mark=at position .6 with {\arrow[line width=1.2pt]{>}}},double distance=1.6pt,thick},
  doublenoarrow/.style={double distance=1.6pt,thick},
  rdoublearrow/.style={postaction={decorate},   decoration={markings,mark=at position .4 with {\arrowreversed[line width=1.2pt]{>}}},double distance=1.6pt,thick},
  rtriplearrow/.style={postaction={decorate},   decoration={markings,mark=at position .4 with {\arrowreversed[line width=1.2pt]{>}}},double distance=2.5pt,thick},
  curvedline/.style={bend=right}
}

\newtheorem{theorem}{Theorem}[section]
\newtheorem{lemma}[theorem]{Lemma}
\newtheorem{proposition}[theorem]{Proposition}

\theoremstyle{definition}
\newtheorem{definition}[theorem]{Definition}
\newtheorem{example}[theorem]{Example}

\theoremstyle{remark}
\newtheorem{remark}[theorem]{Remark}

\numberwithin{equation}{section}

\newcommand{\C}{\mathbb{C}}

\newcommand{\Z}{\mathbb{Z}}
\newcommand{\Cl}{C\!\ell}
\newcommand{\Id}{\mathrm{Id}}
\newcommand{\so}{\mathfrak{so}}
\newcommand{\g}{\mathfrak{g}}

\newcommand{\h}{\mathfrak{h}}
\renewcommand{\L}{\mathcal{L}}
\newcommand{\gL}{\mathfrak{L}}

\renewcommand{\t}{\mathfrak{t}}
\renewcommand{\l}{\mathfrak{l}}
\renewcommand{\k}{\mathfrak{k}}
\renewcommand{\r}{\mathfrak{r}}
\newcommand{\mcB}{\mathcal{B}}
\newcommand{\p}{\mathfrak{p}}
\newcommand{\s}{\mathfrak{s}}
\newcommand{\m}{\mathfrak{m}}

\newcommand{\ad}{\mathrm{ad}}

\newcommand{\vol}{\mathrm{vol}}

\DeclareMathOperator{\der}{der}
\DeclareMathOperator{\gr}{gr}

\begin{document}
\title[{Prolongations of super-Poincar\'e algebras}]
{Classification of maximal transitive prolongations of super-Poincar\'e algebras}
\author[A.~Altomani]{Andrea Altomani}
\address[A.~Altomani]{Research Unit in Mathematics\\
Universit\'e du Luxembourg\\
6 rue Richard Coudenhove-Kalergi\\
L-1359 Luxembourg.
}
\email{andrea.altomani@uni.lu}
\author[A.~Santi]{Andrea Santi}
\address[A.~Santi]{Research Unit in Mathematics\\
Universit\'e du Luxembourg\\
6 rue Richard Coudenhove-Kalergi\\
L-1359 Luxembourg.
}\email{andrea.santi@uni.lu, asanti.math@gmail.com}
\thanks{\\ \phantom{ccc}The second author was supported by the project F1R-MTH-PUL-08HALO-HALOS08 of the University of Luxembourg.}

\keywords{Tanaka prolongations, super-Poincar\'e algebras, Clifford algebras and spinors, supergravity}

\begin{abstract} 
Let $V$ be a complex vector space with a non-degenerate symmetric bilinear form and $\mathbb S$ an
irreducible module over the Clifford algebra $\Cl(V)$ determined by this form. A supertranslation algebra is a $\Z$-graded
Lie superalgebra $\m=\m_{-2}\oplus\m_{-1}$, where $\m_{-2}=V$ and $\m_{-1}=\mathbb S\oplus\cdots\oplus\mathbb{S}$ is the direct sum of an arbitrary number $N\geq 1$ of copies of $\mathbb S$, whose bracket $[\cdot,\cdot]|_{\m_{-1}\otimes \m_{-1}}:\m_{-1}\otimes\m_{-1}\rightarrow\m_{-2}$ is symmetric, $\so(V)$-equivariant and non-degenerate (that is the condition ``$s\in\m_{-1}, [s,\m_{-1}]=0$'' implies $s=0$). 
We consider the maximal transitive prolongations in the sense of Tanaka of supertranslation algebras. We prove that they are finite-dimensional for $\dim V\geq3$ and classify them
in terms of super-Poincar\'e algebras and appropriate $\Z$-gradings of simple Lie superalgebras.
\end{abstract}
\maketitle
\section{Introduction}
\label{Introduction}
The theory of Lie superalgebras became a mainstream topic of research during the '70s, the interest being mainly motivated by the problem of constructing supersymmetric field theories, in particular, supergravity \cite{FP, Nahm}. 
One of the first Lie superalgebras to be considered was the $D=4$ super-Poincar\'e algebra, obtained from the Poincar\'e algebra $\so(4)\!\niplus\C^{4}$ in dimension four by adding ``odd spinorial generators''. 

Similar Lie superalgebras were then defined for all possible dimensions as follows.
\vskip0.2cm\par\noindent
\subsubsection*{Super-Poincar\'e algebras and super-Poincar\'e structures on supermanifolds}\hfill
\vskip0.2cm\par
Let $V$ be a complex vector space of dimension $\dim V=D$ endowed with a non-degenerate symmetric bilinear form $(\cdot,\cdot)$ and $$\Cl(V)=\Cl(V)_{\overline{0}}\oplus\Cl(V)_{\overline{1}}$$ the Clifford algebra determined by this form, with its natural $\mathbb{Z}_2$-grading.
We adopt the conventions used in \cite{LM}, in particular, the product in $\Cl(V)$ satisfies $vu+uv=-2(v,u)\mathbf 1$ for any $v,u\in V$ and the natural inclusion of the orthogonal Lie algebra in the Clifford algebra is given by the map
$$
\gamma:\so(V)\rightarrow\Cl(V)\;,\;\;\ v\wedge u\mapsto \frac{1}{4}(vu-uv)\;,\;\;
$$
where $v\wedge u$ is the anti-symmetric endomorphism $(v,\cdot)u-(u,\cdot)v$. Note that some authors prefer to use different conventions, cf. Deligne's lectures \cite{DeligneS}.

Let $W$ be a module over the Clifford algebra $\Cl(V)$. The action of an element $c\in\Cl(V)$ on $s\in W$ will be denoted by $c\cdot s$. We will denote by $N\geq 1$ the number of irreducible $\Cl(V)$-components of $W$, counted with their multiplicities (we note that this convention is not universally adopted, and some authors use ``$N$'' to denote the number of irreducible $\so(V)$-components). 

If $\dim V$ is even, there exists only one irreducible $\Cl(V)$-module, up to equivalence; if $\dim V$ is odd, there exist two inequivalent irreducible $\Cl(V)$-modules, they are equivalent and irreducible under the action of $\so(V)$ \cite{LM}. 

For our purposes, it will be actually sufficient to fix one of these irreducible $\Cl(V)$-modules and denote it  with the symbol $\mathbb S$.  We call $\mathbb S$ the {\it spinor} representation of $\so(V)$ and adopt the convention that it is a {\it purely odd} supervector space, that is $\mathbb S=\mathbb S_{\bar 0}\oplus\mathbb S_{\bar 1}$ with $\mathbb S_{\bar 0}=(0)$ and $\mathbb S_{\bar 1}=\mathbb S$. 

In particular, any $\Cl(V)$-module $W$, when seen as an $\so(V)$-module, is the direct sum
$$
W=\underbrace{\mathbb S\oplus\cdots\oplus\mathbb{S}}_{N-\text{summands}}
$$
of spinor representations and therefore a purely odd supervector space.
\vskip0.2cm\par

In the following definition, and throughout the paper, anti-symmetric and symmetric tensors
associated to a supervector space $U=U_{\bar 0}\oplus U_{\bar 1}$ are understood in the non-super sense, that is they are defined as
$$
\Lambda (U)=\bigotimes (U)/\langle x\otimes y+y\otimes x\,|\,x,y\in U\rangle
$$
and
$$
\,\,\,S(U)=\bigotimes (U)/\langle x\otimes y-y\otimes x\,|\,x,y\in U\rangle\,,
$$
even when $U$ has a non-trivial odd-part $U_{\bar 1}$.

\begin{definition}
\label{lelele}
A complex Lie superalgebra $\p=\p_{\bar0}\oplus\p_{\bar1}$ is called a {\it super-Poincar\'e algebra} if
\begin{itemize}
\item[--] $\p_{\bar0}=\so(V)\niplus V$ is the usual Poincar\'e algebra associated to $V$;
\item[--] $\p_{\bar1}=W$ is the direct sum of an arbitrary number $N\geq 1$ of copies of the spinor representation;
\item[--] the natural action of $\so(V)$ on $W$ is extended to all $\p_{\bar0}$ by $[V,W]=0$;
\item[--] the bracket between odd elements takes values in $V$ and it is given by a symmetric $\so(V)$-equivariant bilinear map
\begin{equation}
\label{eqgamma}
\Gamma:S^2(W)\rightarrow V
\end{equation}
satisfying the following non-degeneracy condition: if $s\in W$ is an element such that $\Gamma(s,W)=0$, then $s=0$. 
\end{itemize}
\end{definition}
A complete classification of super-Poincar\'e algebras was achieved in the '90s by Alekseevsky and Cort\'es: the main result of \cite{AC} is indeed an explicit description of a basis of the space of $\so(V)$-invariant elements in $S^2(W^*)\otimes V$. 

Let us recall this description. Following \cite{AC}, a non-degenerate bilinear form $\mathcal{B}\colon W\otimes W\to\C$ is called \emph{admissible} if there exist $\tau,\sigma\in\{\pm1\}$ such that 
$$\mcB(v\cdot s, t)=\tau \mcB(s, v\cdot t)=\sigma \mcB(t, v\cdot s)$$ for all $v\in V$ and $s,t\in W$. In \cite{AC}, it is proved that the space of admissible bilinear forms on $W$ is always non-trivial.

Consider now an admissible bilinear form such that $\sigma\tau=1$. It satisfies the following properties:
\begin{enumerate}
\item[(B1)]
$\mcB$ is $\so(V)$-invariant,
\item[(B2)]
$\mcB$ is symmetric or anti-symmetric 
(we let $\epsilon=1$ in the former case and $\epsilon=-1$ in the latter),
\item[(B3)]
for all $v\in V$ and $s,t\in W$, $\mcB(v\cdot s,t)=\epsilon\mcB(s,v\cdot t)$.
\end{enumerate}
One can easily deduce from (B1)-(B3) that the bilinear map $\Gamma:W\otimes W\rightarrow V$ defined by
\begin{equation}
\label{equazione_bracket}
(\Gamma(s,t),v)=\mcB(v\cdot s,t)\qquad\text{for all}\,\,v\in V\,\,\text{and}\,\,s,t\in W
\end{equation}
corresponds to a super-Poincar\'e algebra, that is it is symmetric, $\so(V)$-equivariant and non-degenerate in the sense of Def. \ref{lelele}. 

The main result of \cite{AC} is that the space of $\so(V)$-invariant elements in $S^2(W^*)\otimes V$ has a basis consisting of tensors \eqref{eqgamma} which are associated, in the above manner, to admissible bilinear forms satisfying $\sigma\tau=1$.
\vskip0.3cm\par\noindent
{\it From now on any super-Poincar\'e algebra is tacitly assumed to be determined by an admissible bilinear form such that $\sigma\tau=1$.}
\vskip0.3cm\par
Super-Poincar\'e algebras admit a natural realization as algebras of super-vector fields on supermanifolds.

Recall that a complex supermanifold of dimension $(m|n)$ is a pair 
\begin{equation}
\label{cenci}
\mathbb M=(M,\mathcal A(M))\,,
\end{equation}
formed by an $m$-dimensional complex manifold $M$ (called the \emph{body}) and a sheaf of superalgebras $\pi:\mathcal A(M)\rightarrow M$ (called the \emph{sheaf of superfunctions}) such that for any point $x\in M$ there exist an open neighbourhood $U\supset\{x\}$ and an isomorphism of sheaves of superalgebras
$$
\mathcal A(M)|_{U}\simeq \mathcal O(M)|_{U}\otimes \Lambda((\mathbb{C}^{n})^{*})\,,
$$
where $\mathcal O(M)$ denotes the sheaf of holomorphic functions of $M$, see \cite{Ma, QFS}. 
Local holomorphic coordinates $(z^i)$ on $U$ and generators $(\xi^\alpha)$ of the Grassmann algebra $\Lambda((\mathbb C^{n})^{*})$ are respectively called \emph{even coordinates} and \emph{odd coordinates} of the supermanifold \eqref{cenci}.

It is then easy to see that any super-Poincar\'e algebra 
$$
\p=\p_{\bar0}\oplus\p_{\bar1}\,,\;\;\text{where}\;\;\p_{\bar0}=\so(V)\niplus V\;\;\text{and}\;\;\p_{\bar 1}=W\,,
$$
admits a natural realization as an algebra of super-vector fields
\begin{align*}
&\frac{\partial}{\partial z^i}&&\text{that span}\; V,\\
&\frac{\partial}{\partial\xi^\alpha}-\frac{1}{2}\xi^{\beta}\Gamma_{\beta\alpha}^i\frac{\partial}{\partial z^i}&&\text{that span}\;W,\\
&z^i\frac{\partial}{\partial z^j}-z^j\frac{\partial}{\partial z^i}+\gamma_{ij\alpha}^{\;\;\,\beta}\,\xi^\alpha\frac{\partial}{\partial\xi^\beta}&&\text{that span}\; \so(V),
\end{align*}
on the $(D|N\!\dim\mathbb{S})$-dimensional linear supermanifold
\begin{equation}
\label{cenci2}
\mathbb M=(\mathbb C^D, \mathcal{O}(\mathbb C^D)\otimes \Lambda(W^*))\ .
\end{equation} 

By assigning degrees 
$$\deg z^i=2=-\deg\frac{\partial}{\partial z^i}\,\,\,\,\,\,\text{and}\,\,\,\,\,\,\deg\xi^\alpha=1=-\deg\frac{\partial}{\partial\xi^\alpha}\,,$$
the super-Poincar\'e algebra inherits a natural $\Z$-grading
$\p=\bigoplus_{p\in\mathbb Z}\p_p$, where
$$\p_{-2}=V,\,\,\,\,\p_{-1}=W,\,\,\,\,\p_{0}=\so(V)\,\,\,\,\text{and}\,\,\,\,\p_{p}=0\,\,\,\,\text{for all}\,\,\, p\neq -2, -1, 0,$$ 
which satisfies the usual property $
[\p_p,\p_q]\subset\p_{p+q}$ for all $p,q\in\mathbb Z$.
In this case an additional compatibility condition between the $\mathbb Z$-grading and the Lie superalgebra structure holds true. Indeed the even part of $\p$ is the direct sum of the homogeneous subspaces of even degree, and similarly the odd part:
\[
\p_{\bar 0}=\bigoplus_{p\in\mathbb Z}\p_{2p},\qquad
\p_{\bar 1}=\bigoplus_{p\in\mathbb Z}\p_{2p+1}.
\]
In other words, the parity of $\p=\p_{\bar 0}\oplus\p_{\bar 1}$ concides with the $\Z$-grading $(\mathrm{mod}\,2)$; gradings with this property are called \emph{consistent}.
\vskip0.2cm\par
The negatively graded parts of super-Poincar\'e algebras are usually called {\it supertranslation algebras}. 
Explicitly, one has the following.
\begin{definition}

A consistently $\mathbb{Z}$-graded Lie superalgebra $\m=\m_{\bar 0}\oplus\m_{\bar 1}$ with 
$$
\m_{\bar0}=\m_{-2}=V\,\,\,\,\,\,\text{and}\,\,\,\,\,\,\m_{\bar1}=\m_{-1}=W
$$
is called a {\it supertranslation algebra} if 
the bracket between odd elements is given by a tensor $\Gamma:S^2(W)\rightarrow V$ which is of the form \eqref{equazione_bracket} for some admissible bilinear form $\mcB$ on $W$ such that $\sigma\tau=1$.
\end{definition}
The simply connected nilpotent Lie supergroup corresponding to a supertranslation algebra $\m=\m_{-2}\oplus\m_{-1}$
is clearly identifiable with the linear supermanifold \eqref{cenci2}
and one can associate to $\m_{-1}\subset T_{e}\mathbb M$ a unique $\p$-invariant distribution $\mathcal{D}$ on $\mathbb M$ \cite{SaS2, SaHom}. 

The distribution has depth $2$ 
and its Levi form $$\mathcal{L}_x:S^2(\mathcal{D}_x)\rightarrow T_x\mathbb{M}/\mathcal{D}_x\,,\qquad (X,Y)\mapsto [X,Y]_x\,\pmod{\mathcal{D}_x}$$
is identifiable with the tensor $\Gamma$ at any point $x$ of the body of $\mathbb M$. In particular, non-degeneracy of $\Gamma$ implies that $\mathcal{D}$ is maximally non-integrable.
\vskip0.2cm\par
In \cite{SaS1, SaS2, AlSa} the following notion of curved analogs of the homogeneous models $(\mathbb M, \mathcal{D})$ was considered.
\begin{definition}
\label{defin}
Let $\m=\m_{-2}\oplus\m_{-1}$ be a fixed supertranslation algebra. A {\it super-Poincar\'e structure} (of type $\m$) on a 
supermanifold $\mathbb M$ of dimension $$\dim \mathbb M = (\dim \m_{-2}|\dim \m_{-1})$$
is the datum of a depth $2$ distribution $\mathcal{D}$ with $\operatorname{rank}\mathcal{D} = \dim \m_{-1}$ whose Levi form
$\mathcal{L}_x$ is identifiable at all points $x$ of the body of $\mathbb M$ with the tensor $\Gamma$ corresponding to $\m$.
\end{definition}

Our motivation to study super-Poincar\'e structures relies on the interesting fact that supergravity theories admit, besides the traditional ``component formalism'' formulations (see \cite{FP, Nahm}), more geometric presentations in terms of super-Poincar\'e structures $(\mathbb M,\mathcal{D})$ (see \cite{BGLS, GL, L, RKS, RS1, RS2, SaS1, SaS2}). 

The physical fields of the component formalism presentation, as well as the equations they satisfy, can be represented by appropriate tensorial objects on the supermanifold $\mathbb M$ and the supersymmetry transformations by Lie derivatives along sections of the distribution $\mathcal{D}$.
\vskip0.2cm\par\noindent
\subsubsection*{Maximal transitive prolongations of supertranslation algebras}\hfill
\vskip0.2cm\par
The {\bf main result} Theorem \ref{N1c} of this work  is the explicit description of the maximal transitive prolongation $\g$ in the sense of N. Tanaka \cite{N2} of a supertranslation algebra $\m$, for all possible dimensions $D$ and all $N\in\mathbb{N}$. 
In geometrical terms, $\g$ is the algebra of all infinitesimal symmetries of the homogeneous model $(\mathbb M,\mathcal{D})$ corresponding to $\m$.
\vskip0.2cm\par
In order to formulate our main result, we recall that the \emph{maximal transitive prolongation} of a negatively graded fundamental Lie superalgebra
$$
\m=\bigoplus_{-d\leq p\leq -1}\m_p\qquad(\m\,\text{is negatively graded of depth $d$})
$$
$$
\m_{p}=[\m_{-1},\m_{p+1}]\,\,\,\text{for all}\,\,\,p\leq -2\qquad(\m\,\,\text{is fundamental})\,\,\,\,
$$
is a (possibly infinite-dimensional) $\mathbb{Z}$-graded Lie superalgebra 
$$\g=\bigoplus_{p\in \Z} \g_p$$ 
such that:
\begin{enumerate}
\item
$\g_p$ is finite-dimensional for every $p\in\Z$;
\item
$\g_p=\m_p$ for every $-d\leq p\leq -1$ and $\g_p=0$ for every $p<-d$;
\item
for all $p\geq 0$, if $D\in\g_p$ is an element such that $[D,\g_{-1}]=0$, then $D=0$ (\emph{transitivity});
\item
$\g$ is maximal with these properties, i.e., if $\g'$ is another $\mathbb{Z}$-graded Lie superalgebra satisfying (1), (2), and (3), then there exists an injective homomorphism of $\Z$-graded Lie superalgebras $\phi\colon\g'\to\g$.
\end{enumerate}
The existence and uniqueness of $\g$ is proved in \cite{N2} (the proof is given in the Lie algebra case but it extends verbatim to the superalgebra case).
A concise and self-contained presentation of $\g$ using partial differential equations can also be found in \cite{Schepo}.

Note that, by transitivity, the maximal transitive prolongation of a consistently $\Z$-graded Lie superalgebra $\m$ is also consistently $\Z$-graded, that is 
$$\g_{\overline 0}=\bigoplus_{p\in\Z}\g_{2p}\;\;\;\;\text{and}\;\;\;\;\g_{\overline 1}=\bigoplus_{p\in\Z}\g_{2p+1}\ .$$
\vskip0.2cm\par
Consider now a supertranslation algebra $\m=\m_{-2}\oplus\m_{-1}$, where $\m_{-2}=V$ and $\m_{-1}=W$, together with its maximal transitive prolongation $\g$. The main results of this paper are now illustrated.\vskip0.2cm\par

If $\dim V=1$, the maximal transitive prolongation $\g$ is infinite-dimensional and 
isomorphic to the contact Lie superalgebra $K(1|N)$ described in \cite{Kac98}; indeed a supermanifold of dimension $(1|N)$ endowed with a super-Poincar\'e structure is just a contact supermanifold. If $\dim V=2$, then $\g$ is the direct sum of two copies of $K(1|N)$.  

On the other hand, the following result holds.
\begin{theorem}
If $\dim V\geq 3$, the maximal transitive prolongation of a supertranslation algebra is finite-dimensional.
\end{theorem}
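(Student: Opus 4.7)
\emph{Proof sketch.} The plan is to use the inductive description of the Tanaka prolongation together with a finite-type argument \emph{\`a la} Guillemin--Sternberg--Singer. By transitivity, for every $k\ge 0$ the adjoint action provides an injection
$$
\g_k\hookrightarrow\operatorname{Hom}(W,\g_{k-1}),\qquad D\mapsto\phi_D:=[D,\cdot]|_W,
$$
because the remaining piece $[D,\cdot]|_V:V\to\g_{k-2}$ is recovered from $\phi_D$ via the Jacobi identity
$$
[D,\Gamma(s,t)]=[\phi_D(s),t]+[\phi_D(t),s],\qquad s,t\in W,
$$
together with the non-degeneracy of $\Gamma$ (which forces $\Gamma(W,W)=V$). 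The problem of finite-dimensionality is thereby reduced to bounding the image of these injections in each degree.

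First I would dispose of the base case: a degree-zero derivation of $\m$ is a pair $(A,B)\in\mathfrak{gl}(V)\oplus\mathfrak{gl}(W)$ compatible with $\Gamma$, i.e.\ $A\circ\Gamma=\Gamma\circ(B\otimes\mathrm{Id}+\mathrm{Id}\otimes B)$, which exhibits $\g_0$ as a finite-dimensional reductive Lie subalgebra of $\mathfrak{gl}(V)\oplus\mathfrak{gl}(W)$ containing $\so(V)$.

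For the inductive step, the essential input is that $[V,W]=0$ in $\m$; applied to the super-Jacobi identity for $D\in\g_k$, $v\in V$ and $s\in W$ it yields
$$
[[D,v],s]+[v,\phi_D(s)]=0,
$$
which both expresses $[D,v]$ in terms of $\phi_D$ and, conversely, constrains $\phi_D$ through the $V$-action on $\g_{k-1}$. Iterating this identity together with the Jacobi constraint involving $\Gamma$ converts the description of $\g_k$ into a system of $\so(V)$-equivariant tensorial equations of increasing order in $v\in V$. The hypothesis $\dim V\ge 3$ enters precisely here: $V$ is then a faithful irreducible $\so(V)$-module, and the classical fact that the pseudo-group preserving a non-degenerate quadratic form on $V$ is of finite type for $\dim V\ge 3$ propagates to the super-graded setting, forcing $\g_k=0$ for $k$ large enough.

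The main obstacle is this last finiteness assertion. A clean way to package it is to realize $\g$ as the symmetry algebra of the super-Poincar\'e distribution $(\mathbb M,\mathcal D)$ from \eqref{cenci2}, so that $\g$ becomes the Tanaka prolongation of the pair $(\m,\g_0)$ with $\g_0$ acting faithfully on $W$. For $\dim V\in\{1,2\}$ the distribution $\mathcal D$ reduces (up to direct sum) to a contact super-distribution, the pair is of infinite type, and the $K(1|N)$-factors arise; for $\dim V\ge 3$ the non-trivial $\so(V)$-action on $V$ obstructs this contact reduction, the pair becomes of finite type, and the prolongation must truncate, yielding the desired finite-dimensionality of $\g$.
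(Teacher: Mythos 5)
Your reduction of $\g_k$ to $\operatorname{Hom}(W,\g_{k-1})$ via transitivity and the degree-zero analysis are both fine (they match Proposition \ref{finedopo} and Theorem \ref{lo0} in spirit). The genuine gap is the final finiteness assertion, which you yourself flag as ``the main obstacle'' and then resolve by appeal to a finite-type principle: that the classical finite type of the orthogonal pseudogroup on $V$ for $\dim V\ge 3$ ``propagates to the super-graded setting.'' This is precisely the naive super-analogue of the Tanaka--Serre criterion (finite-dimensionality of the maximal transitive prolongation of $\m$ is equivalent to finite-dimensionality of the Cartan prolongation of $\h_0\subset\mathfrak{gl}(\m_{-1})$), and that criterion is \emph{false} for Lie superalgebras: see Remark \ref{Serre}. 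The counterexample is $E'(5|10)$, the maximal transitive prolongation of a consistently graded $\m=\m_{-2}\oplus\m_{-1}$ of depth $2$ with $\m_{-2}=(\C^5)^*$, $\m_{-1}=\Pi(\Lambda^2\C^5)$ and $\h_0=0$; the ``internal'' prolongation is trivially finite, yet $\g$ is infinite-dimensional. The reason the classical argument does not transfer is structural: Serre's involutivity/Spencer-cohomology argument rests on acyclicity of the Koszul complex of the symmetric algebra of $\m_{-1}$, and when $\m_{-1}=W$ is purely odd the relevant algebra is an exterior algebra (in the non-super sense), so the $2$-acyclicity that forces truncation is lost. Your ``clean packaging'' via the distribution $(\mathbb M,\mathcal D)$ does not repair this, because there is no established super-pseudogroup finite-type theorem to invoke (the paper explicitly notes Tanaka's results have never been proved in the superalgebra setting).

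The paper's route is genuinely different and you would need its machinery (or a substitute) to close the gap: first one shows that if $\g_1\neq0$ then $\g$ is semisimple with a unique minimal ideal $\s$ that is a \emph{simple} transitive prolongation of $\m$ containing $\so(V)$ (Theorem \ref{simpleprol}); then, assuming $\g$ infinite-dimensional, one passes to the linearly compact completion $\bar\s$, produces an even primitive subalgebra and the associated Weisfeiler filtration, and checks against Kac's classification of $\Z$-graded even transitive irreducible infinite-dimensional linearly compact Lie superalgebras and of strongly transitive modules that no entry has depth-$2$ consistent negative part $V\oplus W$ with $\so(V)$ acting tautologically on $V$, $\dim V\ge3$. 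It is this classification, not a prolongation-theoretic finite-type bound, that rules out the infinite-dimensional case.
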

A similar theorem was proved in the Lie algebra case \cite{AlSa} by applying a deep result of N. Tanaka and J.-P. Serre \cite{N2, GS} (see Remark \ref{Serre}). The Lie superalgebra analogue of this deep result is not valid; our proof of finite-dimensionality uses different techniques and relies on the classification of the infinite-dimensional simple linearly compact Lie superalgebras \cite{Kac98}. 
\vskip0.2cm\par
In all cases except those listed in Theorem \ref{teorematotale}, Table \ref{tabmax}, the prolongation $\g$ satisfies $\g_{p}= 0$ for all $p\geq 1$ or equivalently it is the vector space direct sum
$$\g=\p\oplus\C E\oplus\h_0\,,$$ 
where $\p=\m\inplus\so(V)$ is the super-Poincar\'e algebra corresponding to $\m$,
$$E=\sum_{\alpha=1}^{N\dim \mathbb S}\xi^{\alpha}\frac{\partial}{\partial  \xi^{\alpha}}+2\sum_{i=1}^{D}z^{i}\frac{\partial}{\partial  z^{i}}$$
the Euler vector field and 
$\h_0=\{D\in \g_0| [D,{\m_{-2}}]=0\}$ 
the algebra of internal symmetries of $\m_{-1}=W$.

The cases where the positively graded part of $\g$ is not trivial are listed in Theorem \ref{teorematotale}, Table \ref{tabmax}, and reproduced in Table \ref{tabintroduction}  for reader's convenience. Therein, and throughout the paper, the symbol "$\cdots$" appearing in a Dynkin diagram of a Lie superalgebra will denote a subdiagram corresponding to a Lie algebra $\mathfrak{sl}(\ell+1)$ of appropriate (possibly zero) rank $\ell$. Finally,  finite-dimensional simple Lie superalgebras are denoted according to the conventions used in, e.g., \cite{Serga,Leites} and $\mathfrak{pg}=\g/\mathbb{C}\Id$ is the projectivization of any linear Lie superalgebra $\g$ which contains the scalar matrices.
  
{\small
\begin{table}[H] 
\begin{centering}
\begin{tabular}{|c|c|c|c|c|c|}
\hline
$\g$ & Dynkin diagram & $\dim V$ & $\dim W$ & $N$ & $\h_0$\\
\hline
\hline
&&&&&\\[-3mm]

$\mathfrak{osp}(1|4)$&
\begin{tikzpicture}
\node[root]   (3) [label=${{}^2}$]      {} ;
\node[broot]   (4) [right=of 3] {} [label=${{}^2}$]     edge [-] (3);
\end{tikzpicture}&
$3$ & $2$ & $1$ & $0$
\\
\hline
&&&&&\\[-3mm]

$\begin{gathered}\mathfrak{osp}(2m+1|4)\\ m\geq1\end{gathered}$&
\begin{tikzpicture}
\node[root]   (3) [label=${{}^2}$] {};
\node[xroot]   (4) [right=of 3] {} [label=${{}^2}$]  edge [-] (3);
\node[]   (5) [right=of 4] {$\;\cdots\,$} edge [-] (4);
\node[root]   (7) [right=of 5] {} [label=${{}^2}$]  edge [rdoublearrow] (5);
\end{tikzpicture}&
$3$ & $2N$ & $2m+1$ & $\so(2m+1)$
\\
\hline 
&&&&&\\[-3mm]

$\mathfrak{osp}(2|4)$&
\begin{tikzpicture}
\node[root]   (3) [label=${{}^2}$] {} ;
\node[xroot]   (4) [above right=of 3] [label=right:${{}^1}$] {} edge [-] (3);
\node[xroot]   (5) [below right=of 3] [label=right:${{}^1}$] {} edge [-] (3) edge [doublenoarrow] (4);
\end{tikzpicture}&
$3$ & $2N$ & $2$ & $\so(2)$
\\
\hline 
&&&&&\\[-3mm]

$\begin{gathered}\mathfrak{osp}(2m|4)\\ m\geq2\end{gathered}$&
\begin{tikzpicture}
\node[root]   (3)[label=${{}^2}$] {} ;
\node[xroot]   (4) [right=of 3][label=${{}^2}$] {} edge [-] (3);
\node[]   (6) [right=of 4] {$\;\cdots\quad$} edge [-] (4);
\node[root]   (7) [above right=of 6] [label=${{}^1}$]{} edge [-] (6);
\node[root]   (8) [below right=of 6] [label=${{}^1}$]{} edge [-] (6);
\end{tikzpicture}&
$3$ & $2N$ & ${2m}$ & $\so(2m)$
\\
\hline
&&&&&\\[-3mm]

$\begin{gathered}\mathfrak{sl}(m+1|4)\\m\neq3\end{gathered}$&
\begin{tikzpicture}
\node[root]   (1)       [label=${{}^1}$]              {};
\node[xroot] (2) [right=of 1] {}[label=${{}^1}$] edge [-] (1);
\node[]   (3) [right=of 2] {$\;\cdots\,$} edge [-] (2);
\node[xroot]   (4) [right=of 3] {}[label=${{}^1}$] edge [-] (3);
\node[root]   (5) [right=of 4] {}[label=${{}^1}$] edge [-] (4);
\end{tikzpicture}&
$4$ & $4N$ & $m+1$ & $\mathfrak{gl}(m+1)$
\\
\hline
&&&&&\\[-3mm]

$\mathfrak{pgl}(4|4)$&
\begin{tikzpicture}
\node[root]   (1)       [label=${{}^1}$]              {};
\node[xroot] (2) [right=of 1] {}[label=${{}^1}$] edge [-] (1);
\node[]   (3) [right=of 2] {$\;\cdots\,$} edge [-] (2);
\node[xroot]   (4) [right=of 3] {}[label=${{}^1}$] edge [-] (3);
\node[root]   (5) [right=of 4] {}[label=${{}^1}$] edge [-] (4);
\end{tikzpicture}&
$4$ & $4N$ & $4$ & $\mathfrak{gl}(4)$
\\
\hline
&&&&&\\[-3mm]

$\mathfrak{ab}(3)$&
\begin{tikzpicture}
\node[root]   (1)        [label=${{}^1}$]             {};
\node[xroot] (2) [right=of 1] {} [label=${{}^2}$]  edge [rtriplearrow] (1) edge [-] (1);
\node[root]   (3) [right=of 2] {} [label=${{}^3}$] edge [-] (2);
\node[root]   (4) [right=of 3] {} [label=${{}^2}$] edge [doublearrow] (3);
\end{tikzpicture}&
$5$ & $4N$ & $2$ & $\mathfrak{sl}(2)$\\
\hline 
\end{tabular}
\end{centering}
\caption[]{\label{tabintroduction}}
\vskip10pt
\end{table}  
}

The negatively graded parts $\g_{<0}=\bigoplus_{p<0}\g_{p}$ of the Lie superalgebras listed in Table \ref{tabintroduction} are supertranslation algebras and their explicit description 
is provided in Examples \ref{special}, \ref{orto}, and \ref{effe}. 
The Lie superalgebras $\mathfrak{osp}(N|4)$ and $\mathfrak{sl}(N|4)$ with the $\Z$-grading described in Table \ref{tabintroduction} already appeared in \cite{L}.

Also the $\Z$-graded Lie superalgebra
{\small
\begin{table}[H]
\begin{centering}
\begin{tabular}{|c|c|c|c|c|}
\hline
$\g$ & Dynkin diagram & $\g_{-2}$ & $\g_{-1}$ & $\g_0$\\
\hline
\hline
&&&&\\[-3mm]
$\begin{gathered}\mathfrak{osp}(8|2n)\\ n\geq1\end{gathered}$&
\begin{tikzpicture}
\node[root]   (1)    [label=${{}^1}$]                 {};
\node[root] (2) [right=of 1] [label=${{}^2}$]{} edge [-] (1);
\node[root] (3) [right=of 2] [label=${{}^2}$]{} edge [-] (2);
\node[xroot] (4) [right=of 3] [label=${{}^2}$]{} edge [-] (3);
\node[]   (5) [right=of 4] {$\;\cdots\,$} edge [-] (4);
\node[root]   (6) [right=of 5] [label=${{}^1}$]{} edge [doublearrow] (5);
\end{tikzpicture}&
$\C^6$ & $\mathbb{S}^+\otimes \C^{2n}$ & $\so(6)\oplus\C E\oplus \mathfrak{sp}(2n)$
\\
\hline 
\end{tabular}
\end{centering}
\caption[]{\label{tabintro2}}   
\vskip10pt
\end{table}}
\par\noindent
has a non-trivial positively graded part; its negatively graded part is described in Example \ref{notfit} and, although not isomorphic to a supertranslation algebra, admits a similar description in terms of semi-spinor representations $\mathbb S^+$ in dimension $\dim V=6$ (if $\dim V$ is even,  the spinor representation $\mathbb S$ is not $\so(V)$-irreducible and it decomposes $\mathbb S=\mathbb S^+\oplus\mathbb S^-$ into two irreducible components, called {\it semi-spinor} representations).
\vskip0.2cm\par

The existence of a non-trivial positively graded part of $\g$ has a geometrical significance: it provides additional local symmetries of the homogeneous model $\mathbb M$.
In this case, the inclusion of $\m$ in $\g$ induces an open dense embedding of the model into the flag supermanifold $\overline{ \mathbb M}=G/G_{\geq 0}$, where $G$ denotes the simply connected Lie supergroup with Lie superalgebra $\g$ as in Tables \ref{tabintroduction} and \ref{tabintro2} and $G_{\geq 0}$ the parabolic subsupergroup associated to $\g_{\geq 0}=\bigoplus_{p\geq 0}\g_{p}$. 

We believe that the existence of a non-trivial positively graded part is responsible for the off-shell nature of the supergravity theories in $\dim V\leq 6$ modeled on the Lie superalgebras of Examples \ref{special}, \ref{orto},  \ref{effe}, \ref{notfit} (in physics, a theory has an {\it off-shell formulation} if its symmetries are well-defined on the whole set of fields which do not
necessarily satisfy the equations of motion; only such theories are suitable for quantisation). Indeed a crucial step in a super-space formulation of supergravity theories is the choice of a class of connections compatible with the distribution $\mathcal{D}$ and satisfying appropriate torsion contraints, see \cite{L, RKS, SaS1, SaS2}; for $\dim V\leq 6$ the torsion constraints do not determine the connection uniquely. Deeper investigations of this subject will be the content of a future work.
\vskip0.2cm\par\noindent
\subsubsection*{Structure of the paper}\hfill
\vskip0.2cm\par

The paper is structured as follows. In Section \ref{sec1}  we 
adapt to the Lie superalgebra case some of the results already proved in \cite{AlSa} for Lie algebras, giving, in particular, Theorem \ref{lo0} on the structure of $\g_0$.

Section \ref{secfin} specializes to the case $\dim V\geq 3$ and contains most of the technical results of the paper. Therein we first prove that the maximal transitive prolongation $\g$ either satisfies $\g_p=0$ for all $p\geq 1$, or is semisimple and contains a unique minimal ideal $\s$, which is a simple prolongation of $\m$ (Theorem \ref{simpleprol}). Then, with the help of the classification of the $\Z$-graded even transitive irreducible infinite-dimensional Lie superalgebras and strongly transitive modules \cite{Kac98}, we show that $\s$ and $\g$ are finite-dimensional.

In Section \ref{sec2} we first classify all simple Lie superalgebras $\s$ that arise as prolongations of a supertranslation algebra (Theorem \ref{classimple}). They  are all \emph{basic} Lie superalgebras (i.e., not of Cartan type or belonging to the \emph{strange} series $\mathfrak{spe}$ and $\mathfrak{psq}$) and their $\Z$-gradings are given in terms of Dynkin diagrams. Theorem \ref{teorematotale} then describes the maximal transitive prolongations $\g$ whose minimal ideal $\s$ is one of the Lie superalgebras of Theorem \ref{classimple}.

Section \ref{sec3} contains the full classification result (Theorem \ref{N1c}) and the final Section \ref{concl} is devoted to the comparison of the results of this paper with the corresponding ones obtained in the Lie algebra case in \cite{AlSa}.

\vskip0.2cm\par\noindent
\subsubsection*{Notations}\hfill
\vskip0.2cm\par
Given any supervector space $U=U_{\bar 0}\oplus U_{\bar 1}$, we denote by 
$$\Pi U=(\Pi U)_{\bar 0}\oplus(\Pi U)_{\bar 1}$$ the supervector space with opposite parity, that is 
\begin{equation}
\label{pcf}
(\Pi U)_{\bar 0}=U_{\bar 1}\;\;,\qquad(\Pi U)_{\bar 1}=U_{\bar 0}
\end{equation}
as (non-super) vector spaces. 

The tensor product $U\otimes U'$ of two supervector spaces has a natural structure of supervector space given by
$$
(U\otimes  U')_{\overline 0}=(U_{\overline 0 }\otimes U'_{\overline 0})\oplus (U_{\overline 1}\otimes U'_{\overline 1})\,,\,(U\otimes  U')_{\overline 1}=(U_{\overline 0 }\otimes U'_{\overline 1})\oplus (U_{\overline 1}\otimes U'_{\overline 0})\ .
$$
For any positive integer $m$, we denote by $U^{\otimes m}$ the tensor product of $m$-copies of a supervector space $U$.

Finally, the reader should not confuse semi-spinor with {\it half-spin} representations. The latter are the finite-dimensional representations of $\so(V)$ which integrate to $\mathrm{Spin}(V)$ but do not integrate to $\mathrm{SO}(V)$. Half-spin representations are characterized as direct sums of $\so(V)$-submodules of $\bigoplus_{n\in\mathbb N}\mathbb S^{\otimes 2n+1}$.
\vskip0.2cm\par\noindent
{\it Acknowledgements}\/. {The authors are grateful to the anonymous referee for his helpful comments and suggestions.}
\vskip0.3cm\par\noindent

\section{Preliminary results}
\label{sec1}
In this section we adapt to the Lie superalgebra case some of the results already proved in \cite{AlSa} for Lie algebras: we explicitly describe the subalgebra $\g_0$ of the maximal transitive prolongation $\g=\bigoplus_{p\in\Z}\g_p$ of a supertranslation algebra $\m=V\oplus W$, construct an equivariant embedding of $\g_1$ into $W$, and prove that the action of the positively graded part of $\g$ on $\g_{-2}$ is faithful. The results of this section will be frequently and tacitly used throughout the paper.
\vskip0.3cm
\par
We recall that $\g_0$ is the Lie algebra of $0$-degree derivations of $\m$. We will use as synonyms the notations $[D,X]$ and $DX$ to denote the bracket in $\g$ of an element $D\in\g_0$ and an element $X\in\m$; we will also tacitly identify the spaces $\m_{-1}$, $\g_{-1}$ and $W$ (resp. $\m_{-2}$, $\g_{-2}$ and $V$).

Let now $E\in\g_0$ be the derivation acting with eigenvalues $-1$ on $\m_{-1}$ and $-2$ on $\m_{-2}$. We call $E$ the \emph{grading element} of $\g$.
Moreover, let  
\[\h_0=\{D\in\g_0\mid [D,\g_{-2}]=0\}\]
be the set of elements in $\g_0$ acting trivially on $\g_{-2}$.
\vskip0.3cm\par
We quote now some results from \cite{AlSa}, whose proofs carry over unchanged to the Lie superalgebra case.
\begin{theorem}[\protect{\cite[Theorem 2.3]{AlSa}}]
\label{lo0}
The Lie algebra $\g_0$ is a direct sum of ideals:
\[
\g_0=\so(V)\oplus\C E\oplus\h_0\,,
\]
where $\so(V)$ acts on $\m_{-2}=V$ (resp. $\m_{-1}=W$) via the tautological representation (resp. a multiple of the spinor representation).\qed 
\end{theorem}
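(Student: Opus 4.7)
The plan is to verify, in order, the containments, the pairwise transversality, the ideal structure, and the exhaustion. The first items are routine. First, $\so(V)\subset\g_0$ by the $\so(V)$-equivariance of $\Gamma$; the grading element $E$ is a derivation since for $X\in\m_p,Y\in\m_q$ one has $[EX,Y]+[X,EY]=(p+q)[X,Y]=E[X,Y]$; and $\h_0\subset\g_0$ by definition. Restriction to $V$ witnesses pairwise transversality: elements of $\so(V)$ are traceless on $V$, $E|_V=-2\,\Id_V$ is a non-zero scalar, and $\h_0|_V=0$. For the ideal property, $\C E$ is central in $\g_0$ (a degree-$0$ derivation commutes with the diagonal scalar action of $E$), and $\h_0$ is the centralizer in $\g_0$ of the ideal $\g_{-2}=V$ of $\g$, hence is automatically an ideal.

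The heart of the argument is exhaustion, i.e.\ that the restriction map $\pi_V\colon\g_0\to\mathfrak{gl}(V)$ factors through $\mathfrak{co}(V)=\so(V)\oplus\C\,\Id_V$. For $D\in\g_0$ with $A=D|_V$ and $B=D|_W$, the derivation condition $A\Gamma(s,t)=\Gamma(Bs,t)+\Gamma(s,Bt)$ is equivalent, via $(\Gamma(s,t),v)=\mcB(v\cdot s,t)$, to the Clifford-endomorphism identity
$$\mu_{A^t v}=\mu_v B+B^*\mu_v\qquad\text{for all }v\in V,$$
where $\mu_v\in\mathrm{End}(W)$ denotes Clifford multiplication by $v$, $A^t$ is the transpose of $A$ with respect to $(\cdot,\cdot)$, and $B^*$ is the transpose of $B$ with respect to $\mcB$. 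Setting $C=B+B^*$, symmetrizing the identity in $v,u$ and applying the Clifford relation $\mu_v\mu_u+\mu_u\mu_v=-2(v,u)\,\Id_W$ yields
$$\mu_v C\mu_u+\mu_u C\mu_v-2(v,u)\,C=-4(A_{\mathrm{sym}}u,v)\,\Id_W,$$
where $A_{\mathrm{sym}}=\tfrac12(A+A^t)$. Specializing $u=v$ and taking the trace on $W$ (using $\mathrm{tr}(\mu_v C\mu_v)=\mathrm{tr}(C\mu_v^2)=-(v,v)\mathrm{tr}(C)$) forces $(A_{\mathrm{sym}}v,v)$ to be proportional to $(v,v)$, so by polarization $A_{\mathrm{sym}}=\lambda\,\Id_V$ for some $\lambda\in\C$ and $A\in\mathfrak{co}(V)$. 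Writing $A=X+\lambda\,\Id_V$ with $X\in\so(V)$, the element $D-X+\tfrac{\lambda}{2}E$ annihilates $V$ and thus lies in $\h_0$, completing the decomposition as vector spaces.

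It remains to see that $\so(V)$ itself is an ideal, which reduces to $[\h_0,\so(V)]=0$. For $B=h|_W$ with $h\in\h_0$, the derivation condition becomes $\mu_v B=-B^*\mu_v$; right-multiplying by $\mu_v$ gives $\mu_v B\mu_v=(v,v)B^*$, so for non-isotropic $v$ one has $\mu_v B^*=-B\mu_v$. Combining the two, $\mu_v\mu_u B=-\mu_v B^*\mu_u=B\mu_v\mu_u$, first for non-isotropic $v$ and then for every $v\in V$ by linearity; hence $B$ commutes with $\Cl(V)_{\bar 0}\supset\gamma(\so(V))$, as required. The main obstacle is the exhaustion step: the conformal character of $A$ must be extracted from the single Clifford identity above, and this is precisely the place where the non-degeneracy and admissibility of $\mcB$ enter in an essential way.
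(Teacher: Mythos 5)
Your argument is correct, and it is worth noting that the paper itself gives no proof of this statement: it quotes \cite[Theorem 2.3]{AlSa} and asserts that the proof carries over unchanged to the superalgebra case, so your self-contained derivation is a genuine substitute rather than a paraphrase of anything in the text. The substantive steps all check out. The identity $\mu_{A^t v}=\mu_v B+B^*\mu_v$ is the correct translation of the derivation condition via non-degeneracy of $\mcB$ (and the left and right $\mcB$-transposes agree since $\mcB$ is $\epsilon$-symmetric, so $B^*$ is unambiguous); the symmetrization together with $\mu_v\mu_u+\mu_u\mu_v=-2(v,u)\Id_W$ does give the displayed identity for $C=B+B^*$; and the trace computation, using $\mu_v^2=-(v,v)\Id_W$ and cyclicity, forces $(A_{\mathrm{sym}}v,v)=\tfrac{\mathrm{tr}(C)}{\dim W}(v,v)$, whence $A\in\mathfrak{co}(V)$ by polarization and the decomposition $D=X-\tfrac{\lambda}{2}E+h$ follows. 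The final step, showing that $B=h|_W$ commutes with $\Cl(V)_{\bar 0}\supset\gamma(\so(V))$ for $h\in\h_0$, correctly yields $[\h_0,\so(V)]=0$, which is exactly what is missing after one knows $\C E$ is central and $\h_0$ is an ideal. Two small points. First, $\g_{-2}$ is \emph{not} an ideal of $\g$ when $\g_1\neq 0$ (one has $[\g_1,\g_{-2}]\subset\g_{-1}$, non-zero by Proposition \ref{finedopo}); what your argument actually needs, and what is true, is only that $\g_{-2}$ is $\ad(\g_0)$-invariant, which already makes $\h_0=\{D\in\g_0\mid[D,\g_{-2}]=0\}$ an ideal of $\g_0$. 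Second, the clause of the statement asserting that $\so(V)$ acts on $W$ by a multiple of the spinor representation deserves one explicit sentence: the copy of $\so(V)$ in your decomposition is the one embedded as $X\mapsto(X,\gamma(X))$, i.e.\ acting on $W$ through the Clifford module structure, and this is built into your containment step.
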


\begin{lemma}[\protect{\cite[Lemma 2.5]{AlSa}}]
\label{lemmag1}
There exists a unique $\so(V)$-equivariant linear map $\phi\colon\g_1\to W$ satisfying
\[
Dv=v\cdot \phi(D)
\]
for all $D\in\g_1$ and $v\in V$.\qed
\end{lemma}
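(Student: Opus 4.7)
My plan is to construct $\phi(D)$ directly from the super-Jacobi identity, and then verify the required relation by a short Clifford-algebra computation.

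First I would apply super-Jacobi to the triple $(D,v,s)\in\g_1\times V\times W$ and use $[v,s]=0$ in $\m$ to obtain
$$\Gamma([D,v],s)=[[D,s],v].$$
By Theorem~\ref{lo0} I can decompose $[D,s]=A_s+c_s E+B_s$ with $A_s\in\so(V)$, $c_s\in\C$ and $B_s\in\h_0$; since $B_s$ acts trivially on $V$ and $E$ acts on $V$ as $-2\,\Id_V$, the right hand side equals $A_s v-2c_s v$. Pairing with $v'\in V$ via $(\Gamma(w,s),v')=\mcB(v'\cdot w,s)$ and symmetrizing in $(v,v')$, the antisymmetry of $A_s\in\so(V)$ eliminates the $\so(V)$-piece and leaves
$$\mcB(v'\cdot[D,v]+v\cdot[D,v'],s)=-4c_s(v,v')\qquad\text{for all } v,v'\in V,\ s\in W.$$
Since $\mcB$ is non-degenerate and $s\mapsto c_s$ is linear, this uniquely defines $\phi(D)\in W$ via $\mcB(\phi(D),s)=2c_s$, and the assignment $D\mapsto\phi(D)$ is manifestly linear.

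Next I would verify the defining property. Setting $\psi(v):=[D,v]-v\cdot\phi(D)$ and exploiting the Clifford identity $v'\cdot v\cdot\phi+v\cdot v'\cdot\phi=-2(v,v')\phi$, the displayed equation collapses to $\mcB(v'\cdot\psi(v)+v\cdot\psi(v'),s)=0$ for all $s$, whence $v'\cdot\psi(v)+v\cdot\psi(v')=0$ by non-degeneracy of $\mcB$. Specializing to $v=v'$ gives $v\cdot\psi(v)=0$, and multiplying once more by $v$ yields $(v,v)\,\psi(v)=0$, so $\psi$ vanishes on the Zariski-dense set of non-null vectors and hence identically by linearity.

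Uniqueness of $\phi(D)$ is immediate, since $v\cdot\phi=0$ for every $v\in V$ forces $\C\phi$ to be a $V$-annihilated submodule of $W=\mathbb S\oplus\cdots\oplus\mathbb S$, which is impossible for $\dim V\geq 1$. For $\so(V)$-equivariance, I would apply super-Jacobi to $[A,D]$ with $A\in\so(V)$ and use the standard identity $A\cdot(v\cdot\phi)=A(v)\cdot\phi+v\cdot A\phi$ (a consequence of $[\gamma(A),v]_{\Cl}=A(v)$), which together with uniqueness yields $\phi([A,D])=A\cdot\phi(D)$. The main technical point will be the symmetric-pairing computation: the clean cancellation of the $\so(V)$-component of $[D,s]$ is what isolates the scalar $c_s$ defining $\phi(D)$ in the first place.
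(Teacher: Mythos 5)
Your argument is correct and is essentially the standard proof of \cite[Lemma 2.5]{AlSa}, to which the paper defers without reproving it: apply the Jacobi identity to $(D,v,s)$, decompose $[D,s]$ according to Theorem~\ref{lo0}, symmetrize in $(v,v')$ to kill the $\so(V)$-component and isolate the $E$-component $c_s$, define $\phi(D)$ by $\mcB(\phi(D),\cdot)=2c_{(\cdot)}$, and finish with the Clifford relation $vu+uv=-2(v,u)\mathbf 1$. The only cosmetic point is that uniqueness is most directly seen from the invertibility of Clifford multiplication by a non-isotropic vector ($v\cdot v=-(v,v)\Id$), rather than via the submodule argument you sketch, but your conclusion is the same.
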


\begin{proposition}[\protect{\cite[Proposition 2.6]{AlSa}}]
\label{hanome}
For every $v\in V$, there exists a unique $0$-degree Lie superalgebra homomorphism $\psi_v:\g\to\g$ which satisfies
\begin{enumerate}
\item
$\psi_v(s)=v\cdot s$ for all $s\in W$;
\vskip0.1cm
\item for all $u\in V$,
$\psi_v(u)=\begin{cases} 
\epsilon(v,v)\left(u-\frac{2(v,u)}{(v,v)}v\right)& \text{if $v$ is non-isotropic,}\\
-2\epsilon(v,u)v& \text{if $v$ is isotropic;}
\end{cases}$
\vskip0.1cm
\item
$
\psi_v(\phi(\psi_v(D)))=\epsilon\phi(D)
$
for all $D\in \g_1$.
\end{enumerate}
\vskip0.1cm
Moreover, $\psi_v$ is invertible if and only if  $v$ is non-isotropic. \qed
\end{proposition}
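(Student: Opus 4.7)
The plan is to first build $\psi_v$ on the supertranslation algebra $\m=V\oplus W$ using the explicit formulas in (1) and (2), verify it is a Lie superalgebra homomorphism, and then promote it to $\g$ by the universal property of the maximal transitive prolongation. Condition (3) will drop out as a consequence of the homomorphism property. Concretely, on $W$ set $\psi_v(s)=v\cdot s$, and on $V$ set $\psi_v(u)=\epsilon(v,v)u-2\epsilon(v,u)v$; this unifies both cases of (2) as a single expression that is polynomial in $v$. The only nontrivial bracket of $\m$ is $\Gamma\colon S^2 W\to V$, so the homomorphism check reduces to $\psi_v(\Gamma(s,t))=\Gamma(v\cdot s,v\cdot t)$. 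Using $(\Gamma(s,t),u)=\mcB(u\cdot s,t)$, the Clifford relation $uv+vu=-2(u,v)\mathbf 1$, admissibility $\mcB(u\cdot s,t)=\epsilon\mcB(s,u\cdot t)$, and $v\cdot v=-(v,v)\mathbf 1$, I would compute
\[
(\Gamma(v\cdot s,v\cdot t),u)=\epsilon(v,v)(\Gamma(s,t),u)-2\epsilon(u,v)(\Gamma(s,t),v),
\]
which by non-degeneracy of the form on $V$ is exactly $(\psi_v(\Gamma(s,t)),u)$.

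Extending $\psi_v$ to $\g$: if $v$ is non-isotropic then $\psi_v|_\m$ is an automorphism, since $v$ is invertible in $\Cl(V)$ (inverse $-v/(v,v)$) and $\psi_v|_V$ is $\epsilon(v,v)$ times the reflection across $v^\perp$. Functoriality of maximal transitive prolongations then lifts this to a unique $\Z$-graded automorphism $\psi_v\colon\g\to\g$: one inductively identifies $\g_k$ ($k\geq 0$) with a space of degree-$k$ derivations of the lower graded part, conjugates by $\psi_v|_{\g_{<k}}$, and invokes maximality to conclude the resulting derivation lies in $\g_k$. For isotropic $v$ I would use algebraic continuation: for each fixed $D\in\g_k$ the assignment $v\mapsto\psi_v(D)\in\g_k$ is polynomial in $v$ on the Zariski-dense open subset of non-isotropic vectors (this is visible on $\m$ from the explicit formulas and propagates inductively to higher $\g_k$), hence extends uniquely to a polynomial map on all of $V$. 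Uniqueness on $\m$ is immediate from (1) and (2); on $\g_{\geq 0}$ it follows from transitivity (the adjoint action of $\g_{\geq 0}$ on $\g_{-1}=W$ is faithful, and $[\psi_v(D),\psi_v(s)]=\psi_v([D,s])$ forces $\psi_v(D)$ on $\psi_v(W)=W$ when $v$ is non-isotropic), and from polynomial continuation in the isotropic case.

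For condition (3), apply the homomorphism property to the bracket $[D,v]=v\cdot\phi(D)$ with $D\in\g_1$ (Lemma \ref{lemmag1}): on the one hand $\psi_v([D,v])=v\cdot v\cdot\phi(D)=-(v,v)\phi(D)$, while on the other $[\psi_v(D),\psi_v(v)]=\psi_v(v)\cdot\phi(\psi_v(D))=-\epsilon(v,v)\,v\cdot\phi(\psi_v(D))$, where I used $\psi_v(v)=-\epsilon(v,v)v$. Dividing by $-(v,v)$ for non-isotropic $v$ gives $\phi(D)=\epsilon v\cdot\phi(\psi_v(D))=\epsilon\psi_v(\phi(\psi_v(D)))$, which rearranges to (3); the isotropic case again follows by polynomial continuation. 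The invertibility clause is transparent: $\psi_v$ is invertible on $\g$ iff it is on $\m$, iff $v$ is invertible in $\Cl(V)$, iff $(v,v)\neq 0$.

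The hard part is the isotropic case, where $\psi_v|_\m$ is no longer injective (its kernel contains $v^\perp\oplus v\cdot W$) and the standard automorphism-lifting argument for prolongations does not directly apply. The polynomial continuation argument circumvents this, but one must verify that (i) all the relevant homomorphism identities are polynomial in $v$ and therefore continue across the isotropic locus, and (ii) the resulting limit map is still unique as a graded Lie superalgebra homomorphism extending the prescribed restriction to $\m$; both points rely on the density of the non-isotropic locus in $V$.
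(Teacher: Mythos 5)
The paper does not actually prove this proposition: it is quoted from \cite[Proposition 2.6]{AlSa} with the blanket remark that the proofs ``carry over unchanged'' to the super case, so there is no in-text argument to compare yours against line by line. That said, the non-isotropic part of your proposal is correct and is surely the intended route: the verification that $\psi_v$ is an endomorphism of $\m$ (via $vuv=(v,v)u-2(u,v)v$ and property (B3) of $\mcB$), the lift to $\g$ by canonical prolongation of an automorphism of $\m$ (the same mechanism the paper invokes in the proof of Proposition \ref{nonlosoforte}), the derivation of (3) from $[D,v]=v\cdot\phi(D)$ together with $\psi_v(v)=-\epsilon(v,v)v$, and the invertibility criterion are all fine.

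The isotropic case is a genuine gap, and your polynomial-continuation strategy cannot close it. The map $v\mapsto\psi_v|_{\g_k}$ is homogeneous of degree $-k$ in $v$ (since $\psi_{\lambda v}|_{\g_p}=\lambda^{-p}\psi_v|_{\g_p}$), so for $k\geq 0$ it can only be polynomial if it is constant, which it is not. Concretely, for $D\in\so(V)\subset\g_0$ the homomorphism property forces $\psi_v(D)$ to act on $V$ as $R_vDR_v$, where $R_v$ is the reflection in $v^{\perp}$; writing $D=e_1\wedge e_2$ in $V=\C^3$ one finds $R_vDR_v=D-\tfrac{2}{(v,v)}\bigl((v_1^2+v_2^2)\,e_1\wedge e_2+v_2v_3\,e_1\wedge e_3-v_1v_3\,e_2\wedge e_3\bigr)$, whose numerator is not divisible by $(v,v)=v_1^2+v_2^2+v_3^2$, so there is a genuine pole along the isotropic cone. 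Likewise, on $\g_1$ injectivity of $\phi$ and (3) force $\phi(\psi_v(D))=-\epsilon(v,v)^{-1}\,v\cdot\phi(D)$, of homogeneity $-1$. The obstruction is in fact structural, not an artifact of your method: for isotropic $v\neq 0$ the prescribed $\psi_v|_{\g_{-2}}=-2\epsilon(v,\cdot)v$ kills the hyperplane $v^{\perp}$, which is not $\so(V)$-stable when $\dim V\geq 3$. Taking $v=e_1+ie_2$, $D=e_1\wedge e_3\in\so(V)$ and $u=e_3\in v^{\perp}$, one has $\psi_v(u)=0$ while $\psi_v([D,u])=2\epsilon(Dv,u)v=2\epsilon v\neq 0$, so \emph{no} choice of $\psi_v(D)\in\g_0$ can make $\psi_v$ multiplicative on $\g_0\times\g_{-2}$. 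Hence the isotropic case cannot be reached as a degeneration of the non-isotropic one, and the statement for isotropic $v$ has to be formulated and proved differently (note that the paper only ever uses $\psi_v$ for non-isotropic $v$, together with linearity of $v\mapsto\psi_v|_W$); you would need to consult the proof in \cite{AlSa} to see how that case is actually handled. As written, your argument establishes the proposition only for non-isotropic $v$.
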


The next result corresponds to \cite[Theorem 2.4 (2)]{AlSa}. There are however some differences between the classical and super case and it is appropriate to give 
a full proof.

\begin{proposition}
\label{finedopo}
Let $\g=\bigoplus_{p\in\mathbb{Z}}\g_p$ be the maximal transitive prolongation of a supertranslation algebra $\m=V\oplus W$ with $\dim V\geq 3$. For all $p\geq 1$, if $D\in\g_p$ and $[D,\g_{-2}]=0$, then $D=0$.
\end{proposition}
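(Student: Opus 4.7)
The proof will be by induction on $p$, with the content concentrated in the base case $p=1$. For the inductive step $p\geq 2$, fix $D\in\g_p$ with $[D,V]=0$ and pick any $v\in V$, $s\in W$. Since $[v,s]=0$ in $\m$ and $[D,v]=0$ by hypothesis, the super-Jacobi identity forces $[[D,s],v]=0$, so $[D,s]\in\g_{p-1}$ satisfies $[[D,s],V]=0$. The induction hypothesis then yields $[D,s]=0$ for every $s\in W=\g_{-1}$, and transitivity (property (3) in the definition of the maximal prolongation) concludes $D=0$.

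For the base case $p=1$, the same Jacobi computation shows that $A_s:=[D,s]\in\g_0$ commutes with $V$, and the decomposition $\g_0=\so(V)\oplus\C E\oplus\h_0$ of Theorem \ref{lo0} forces $A_s\in\h_0$, since neither $\so(V)$ nor $\C E$ contains a nonzero element centralizing $V$. Because $\h_0$ and $\so(V)$ are distinct ideals of $\g_0$, every $A_s$ acts on $W$ as an $\so(V)$-equivariant endomorphism. Applying the super-derivation property of $D\in\g_1$ to the bracket $[s,t]=\Gamma(s,t)\in V$ and using $[D,V]=0$, I obtain the antisymmetry identity $A_s\cdot t+A_t\cdot s=0$ for all $s,t\in W$.

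To deduce $A_s=0$ from this antisymmetry, I would decompose $W$ into $\so(V)$-isotypic components: a single copy of $\mathbb S\otimes\C^N$ for odd $\dim V$, or two copies $\mathbb S^\pm\otimes\C^N$ with $\mathbb S^+\not\cong\mathbb S^-$ for even $\dim V$. Schur's lemma then implies that each $A_s$ preserves these components and acts on each one as $\mathrm{id}_{\mathbb S^\epsilon}\otimes\widetilde B^\epsilon(s)$ for some $\widetilde B^\epsilon(s)\in\mathrm{End}(\C^N)$. Substituting $s=\sigma\otimes u$ and $t=\tau\otimes w$ into the antisymmetry relation and choosing $\sigma,\tau$ linearly independent in a common spinor summand forces both $\widetilde B^\epsilon(s)w$ and $\widetilde B^\epsilon(t)u$ to vanish separately, whence $A_s=0$ for all $s\in W$; transitivity then forces $D=0$, completing the base case.

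The main obstacle is precisely this last step: the antisymmetry relation by itself does not force $A_s=0$, and one must exploit both the $\so(V)$-equivariance of the $\h_0$-action on $W$ and the tensor factorization $W=\mathbb S\otimes\C^N$ to decouple the spinor variable from the multiplicity variable. This is exactly where the hypothesis $\dim V\geq 3$ enters essentially, via the lower bound $\dim\mathbb S^\epsilon\geq 2$ on each spinor summand, and it is consistent with the known failure of finite-dimensionality of $\g$ for $\dim V\leq 2$.
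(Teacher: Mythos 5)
Your argument is correct, but it takes a genuinely different route from the paper's. The paper treats all degrees $p\geq 1$ at once: it identifies $\{D\in\g_p\mid [D,\g_{-2}]=0\}$ with the $p$-th term $\h_0^{(p)}$ of the Cartan superprolongation of $\h_0\subset\mathfrak{gl}(W)$, then uses three pairwise orthogonal non-isotropic vectors $x,y,z\in V$ (this is where $\dim V\geq 3$ enters there) to build the non-degenerate anti-symmetric form $\alpha(s,t)=\mcB(x\cdot y\cdot z\cdot s,t)$ with $\h_0\subset\mathfrak{osp}(W,\alpha)$, and concludes by citing the vanishing $\mathfrak{osp}(W,\alpha)^{(p)}=0$ for $p\geq 1$. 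You instead reduce everything to $p=1$ by an easy induction (valid, since for $p\geq 2$ the bracket $[D,s]$ lands in $\g_{p-1}$ with $p-1\geq 1$) and then settle $p=1$ by hand: the identity $A_s\cdot t+A_t\cdot s=0$ is correctly derived, $A_s\in\h_0$ is forced because no non-zero element of $\so(V)\oplus\C E$ annihilates $V$, and Schur's lemma applied to the isotypic decomposition of $W$ (together with $\dim\mathbb S^{\epsilon}\geq 2$, and the observation that the two terms of the identity lie in complementary isotypic components when $s,t$ come from different summands) does kill all the $A_s$; linearity of $s\mapsto A_s$ then extends the conclusion from pure tensors to all of $W$, and transitivity finishes. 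What the paper's route buys is uniformity in $p$ and brevity, at the price of invoking an external prolongation-vanishing theorem (Galaev); your route is self-contained and more elementary, and it locates the role of $\dim V\geq 3$ differently --- through $\dim\mathbb S^{\epsilon}\geq 2$ rather than through the existence of an orthogonal non-isotropic triple --- which is consistent with the failure of the statement for $\dim V\leq 2$. The only points worth writing out in a final version are the cross-component case of the identity and the reduction to pure tensors, both of which are immediate.
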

\begin{proof}
For every $p\geq 0$, it is known that the space $\{D\in\mathfrak{g}_{p}\,|\,[D,\mathfrak{g}_{-2}]=0\}$ is identifiable with the $p$-th term
\vskip-0.1cm
$$
\mathfrak{h}_0^{(p)}=(W\otimes\Lambda^{p+1}(W^*))\cap (\mathfrak{h}_0\otimes \Lambda^p(W^*))
$$
of the Cartan superprolongation (see, \textit{e.g.}, \cite{Gal} for more details) of the purely even Lie superalgebra $\mathfrak{h}_0\subset\mathfrak{gl}(W)$ acting on the purely odd supervector space $W$. 

Let $x,y,z\in V$ be orthogonal non-isotropic vectors and consider the bilinear form $\alpha$ on $W$ defined by 
\begin{equation}
\label{laforma}
\alpha(s,t)=([y\cdot z\cdot s,t],x)=\mcB(x\cdot y \cdot z \cdot s,t)
\end{equation}
for every $s,t\in W$. Straightforward computations show that 
\begin{enumerate}
\item
$\alpha$ is anti-symmetric and non-degenerate,
\item
$\h_0\subset \mathfrak{osp}(W,\alpha)$.
\end{enumerate}
Since $\mathfrak{osp}(W,\alpha)^{(p)}=0$ for every $p\geq 1$ (see, \textit{e.g.}, \cite[Theorem 5.1]{Gal}), also $\h_0$ has a Cartan superprolongation which is trivial in positive degrees. 
\end{proof}

Specialized to the case $p=1$, Proposition \ref{finedopo} asserts that the $\so(V)$-equivariant linear map 
$\phi\colon\g_1\to W$ considered in Lemma \ref{lemmag1} is injective, that is any $D\in\g_1$ is uniquely determined by its action on $V$. Moreover, by (3) of Proposition \ref{hanome}, the image $\phi(\g_1)$ is a $\Cl(V)$-submodule of $W$. 
\smallskip

\begin{remark}
\label{Serre}
In the Lie algebra case, \cite[Theorem 2.4]{AlSa} states in addition that the maximal transitive prolongation of an extended translation algebra $\m=V\oplus W$ with $\dim V\geq 3$ is {\it finite-dimensional}.

This is a consequence of a deep theorem of Tanaka \cite[Theorem 11.1]{N2} which is based on some arguments of Serre \cite{GS} on Spencer cohomology of Lie algebras. In its more general form, this deep theorem says: 
\vskip0.2cm\par\noindent
{\it The maximal transitive prolongation $\g$ of a fundamental Lie algebra $$\m=\bigoplus_{-d\leq p\leq -1}\m_{p}$$ is finite-dimensional if and only if the Cartan prolongation of the Lie algebra $$\h_0=\{D\in\g_0\mid [D,\bigoplus_{-d\leq p\leq -2}\m_{p}]=0\}\subset \mathfrak{gl}(\m_{-1})$$ is finite-dimensional.} 
\vskip0.2cm\par

The naive generalization of Tanaka's result is not true for Lie superalgebras. As a counterexample, consider the infinite-dimensional exceptional semisimple Lie superalgebra $\g=E'(5|10)$ described in \cite[\S 4.3]{CK99}. It is the maximal transitive prolongation of the consistently $\Z$-graded Lie superalgebra $\m=\m_{-2}\oplus\m_{-1}$, where $\m_{-2}=(\C^{5})^{*}$ and $\m_{-1}=\Pi(\Lambda^2(\C^5))$, with bracket given by $[\alpha,\beta]=\imath_{\alpha\wedge\beta}\vol$, for any $\alpha, \beta\in \m_{-1}$. The subalgebra $\g_{0}$ is $\mathfrak{gl}(5)$ acting in the obvious way on $\m$; in particular, $\h_{0}=0$.
\end{remark}

We will prove in Section \ref{secfin} that the maximal transitive prolongation $\g$ of a supertranslation algebra $\m=V\oplus W$ with $\dim V\geq 3$ is finite-dimensional. Our proof does not rely on a generalization of Tanaka's result but rather on the existence (when $\g_{1}\neq 0$) of a ``large'' simple ideal $\m\subset \s\subset \g$ (see Theorem \ref{simpleprol}) and on the classification of {\it $\Z$-graded even transitive irreducible infinite-dimensional Lie superalgebras} and of {\it strongly transitive modules} given in \cite{Kac98}.

In the low dimensional cases $\dim V=1,2$, 
the Lie superalgebra $\g$ is infinite-dimensional. These cases will be discussed in detail in Section \ref{sec3}.
\vskip0.3cm\par\noindent

\centerline
{\it From now on, we will assume that $\dim V\geq 3$.}
\vskip0.3cm\par\noindent

\section{Semisimplicity and finite-dimensionality}
\label{secfin}
\subsection{Semisimplicity of the maximal prolongation}\hfill\vskip0.3cm\par

Recall that a (possibly infinite-dimensional) Lie superalgebra is usually said to be \emph{semisimple} if its radical is zero, see \cite{Cheng1995, Kac06}. Equivalently, a Lie superalgebra is semisimple if it does not contain any non-zero abelian ideal.
The main goal of this section is to prove the following theorem.
\begin{theorem}
\label{nonloso}
\label{simpleprol}
Let $\dim V\geq 3$ and $\g=\bigoplus_{p\in\Z}\g_p$ be the maximal transitive prolongation of a supertranslation algebra $\m=V\oplus W$. Then exactly one of the following two cases occurs:
\begin{enumerate}
\item $\g_{p}=0$ for all $p\geq 1$;
\item $\g$ is semisimple and contains a unique minimal non-zero ideal $\s$.
\end{enumerate}
In the latter case, $\s$ is a simple transitive prolongation of $\m$ which contains $\bigoplus_{p\geq 0}\g_{2p+1}$ and the ideal $\so(V)$ of $\g_{0}$ described in Theorem \ref{lo0}.
\end{theorem}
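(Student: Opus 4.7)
The plan is to show cases (1) and (2) are mutually exclusive and exhaustive, then verify the structural claims. If $\g_p=0$ for all $p\geq 1$, then $V=\m_{-2}$ is a non-zero abelian ideal of $\g$ obstructing semisimplicity, so the two cases cannot overlap. From now on assume $\g_{\geq 1}\neq 0$; transitivity then forces $\g_1\neq 0$. Since the grading element $E\in\g_0$ acts as multiplication by $p$ on $\g_p$, a Vandermonde argument applied to $\ad(E)$ shows every ideal of $\g$ is graded. I first claim that any non-zero graded ideal $\mathfrak{I}$ contains $V$: letting $p$ be the lowest degree with $\mathfrak{I}_p\neq 0$, transitivity gives $p\leq -1$; if $p=-2$, the $\so(V)$-irreducibility of $V$ (using $\dim V\geq 3$) forces $\mathfrak{I}_{-2}=V$; if $p=-1$, non-degeneracy of $\Gamma$ yields $[\mathfrak{I}_{-1},W]\neq 0$ inside $V$, and irreducibility again gives $[\mathfrak{I}_{-1},W]=V$. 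Let $\s$ be the graded ideal of $\g$ generated by $V$; the claim identifies $\s$ as the unique minimal non-zero ideal.

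To verify $\m\subset\s$, $\so(V)\subset\s_0$, and $\bigoplus_{p\geq 0}\g_{2p+1}\subset\s$, I argue as follows. From Lemma \ref{lemmag1}, $[\g_1,V]=V\cdot\phi(\g_1)$, and combining the Clifford-invariance of $\phi(\g_1)$ from Proposition \ref{hanome}(3) with the invertibility of Clifford multiplication by a non-isotropic vector gives $V\cdot\phi(\g_1)=\phi(\g_1)$, hence $\phi(\g_1)\subset\s_{-1}$. A Jacobi computation shows that $[D,s]\in\g_0$, for $D\in\g_1$ and $s\in W$, acts on $V$ by $v\mapsto\Gamma(s,v\cdot\phi(D))$; expressing this through $\mcB$ and identifying its $\so(V)$-skew component, one sees that as $D,s$ vary the brackets (which lie in $\s_0$ by the ideal property) span the $\so(V)$-summand of $\g_0=\so(V)\oplus\C E\oplus\h_0$, so $\so(V)\subset\s_0$. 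Once $\so(V)\subset\s_0$, the $\so(V)$-action on $\s_{-1}\supset\phi(\g_1)$, together with the absence of $\so(V)$-invariant vectors in $W$ and the decomposition of Clifford submodules of $W$ into spinor summands, yields $W\subset\s_{-1}$; similarly $[\so(V),\g_1]=\g_1$ (no $\so(V)$-invariants in $\g_1$) places $\g_1\subset\s_1$, and higher $\g_{2p+1}\subset\s$ follow by induction on $p$ using transitivity and $[\g,\s]\subset\s$.

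To show $\s$ is simple, let $\mathfrak{J}$ be a non-zero ideal of $\s$. Its graded closure is checked to be an $\s$-ideal by a degree-by-degree decomposition of brackets, so I may assume $\mathfrak{J}$ is graded. The previous ``$V$ inside every non-zero ideal'' argument, now carried out inside $\s$ (valid because $\so(V)\subset\s_0$, $\s_{-2}=V$ is $\so(V)$-irreducible, and $\s$ is transitive with negative part $\m$), gives $V\subset\mathfrak{J}$. But $V$ generates $\s$ already as an $\s$-ideal: all brackets used to produce $\s$ as a $\g$-ideal involve only elements now known to lie in $\s$. Hence $\mathfrak{J}=\s$. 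Semisimplicity of $\g$ follows: any non-zero abelian ideal of $\g$ would be graded, would contain the simple $\s$ by minimality, and would thereby contradict the non-abelianness of $\s$.

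I expect the main obstacle to be the extraction of $\so(V)\subset\s_0$ and the subsequent promotion from $\phi(\g_1)\subset\s_{-1}$ to $W\subset\s_{-1}$. The a priori data $V\subset\s$ and $\phi(\g_1)\subset\s_{-1}$ are not manifestly sufficient, and one must combine $\Gamma$, $\mcB$, the Clifford action, the automorphisms $\psi_v$ of Proposition \ref{hanome}, and transitivity in a carefully coordinated way, particularly because $\phi(\g_1)$ can in principle be a proper Clifford submodule of $W$.
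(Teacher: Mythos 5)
Your reduction of the theorem to the single claim $\so(V)\subset\s_0$ is sound in outline (graded ideals via $\ad E$, every non-zero ideal contains $V$, and the promotion from $\so(V)\subset\s_0$ to $W\subset\s_{-1}$ and $\g_{2p+1}\subset\s$ via the absence of trivial $\so(V)$-summands all match the paper's Claims I--II). But the step you rely on to get $\so(V)\subset\s_0$ has a genuine gap, and it is exactly the one you flag at the end. You assert that the brackets $[D,s]$, for $D\in\g_1$ and $s\in W$, ``lie in $\s_0$ by the ideal property''. At that stage you only know $V\subset\s$ and $\phi(\g_1)\subset\s_{-1}$; to place $[D,s]$ in $\s$ you need $s\in\s_{-1}$ or $D\in\s_1$, neither of which is available for general $s\in W$. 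If you restrict to $s\in\phi(\g_1)$, the $\so(V)$-skew component you compute equals $\tfrac{\epsilon}{2}\mcB\bigl((vu-uv)\cdot s,\phi(D)\bigr)$ with \emph{both} arguments in the Clifford submodule $\phi(\g_1)$, and this vanishes identically precisely when $\phi(\g_1)$ is a $\mcB$-isotropic $\Cl(V)$-submodule of $W$ --- which is a priori possible (it is equivalent to the minimal ideal being abelian). So your argument proves nothing in exactly the problematic case, and since simplicity of $\s$, $W\subset\s_{-1}$, and semisimplicity of $\g$ are all downstream of $\so(V)\subset\s_0$ in your scheme, the whole chain collapses there.

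The missing idea is the content of the paper's Proposition \ref{nonlosoforte}: one assumes the minimal ideal $\s$ is abelian, so that $\s_{-1}$ is a non-zero $\mcB$-isotropic $\Cl(V)$-submodule, decomposes $W=\mathfrak{a}\oplus\mathfrak{b}\oplus\s_{-1}$ with $\mathfrak{a}$ a complementary isotropic $\Cl(V)$-submodule (after correcting $\mathfrak a$ by the map $(\eta^*)^{-1}\circ\tilde\eta^*$), and then constructs an explicit $0$-degree automorphism $\chi$ of $\m$ --- hence of $\g$ --- with $\chi(\s_{-1})=\mathfrak{a}$. Since the unique minimal ideal is preserved by every $0$-degree automorphism, this is a contradiction; only then does the paper's \eqref{tralallala}-type computation (applied to $D\in\s_1$, $s\in\g_{-1}$, where the ideal property genuinely applies) yield $\so(V)\subset\s_0$. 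Two smaller points: for $\dim V=4$ the algebra $\so(4)$ is not simple, so a non-trivial $\so(V)$-component of $\s_0$ gives all of $\so(V)$ only after invoking invariance under the $\psi_v$'s; and in your simplicity argument, replacing an ideal $\mathfrak{J}$ of $\s$ by its graded closure proves that the \emph{closure} is all of $\s$, not that $\mathfrak{J}$ is --- the paper instead brackets the top homogeneous component of an arbitrary non-zero element of $\mathfrak{J}$ with $j+2$ elements of $\g_{-1}$ to land non-trivially in $\mathfrak{J}\cap\g_{-2}$.
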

\begin{remark}
Theorem \ref{nonloso} is also valid in the Lie algebra case, with essentially the same proof. It is then easy to show that $\g=\s$ is a (finite-dimensional) simple Lie algebra in case (2). This improves \cite[Theorem 2.7]{AlSa}, extending the classification results obtained in \cite{AlSa} to arbitrary $N\geq 1$. 
\end{remark}
The proof of Theorem \ref{nonloso} requires an intermediate result.
\begin{proposition}
\label{nonlosoforte}
Let $\g$ be the maximal transitive prolongation of a supertranslation algebra $\m=V\oplus W$ with $\dim V\geq 3$. If $\g_1\neq 0$, then any transitive prolongation $\mathfrak{q}$ of $\m$ such that
\begin{enumerate}
\item $\mathfrak{q}_1\neq 0$,
\item $\mathfrak{q}_0$ contains the ideal $\so(V)\oplus\mathbb{C}E$ of $\g_{0}$ described in Theorem \ref{lo0},
\item $\mathfrak{q}$ is preserved by any $0$-degree Lie superalgebra automorphism of $\g$,
\end{enumerate} 
is a semisimple Lie superalgebra.
\end{proposition}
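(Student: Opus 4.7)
The plan is to argue by contradiction: suppose $\mathfrak{q}$ contains a non-zero abelian ideal $\mathfrak{i}$.

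First, since the grading element $E$ lies in $\mathfrak{q}_0$ by hypothesis (2) and $\ad E$ is diagonalizable on $\mathfrak{q}$ with integer eigenvalues, every ideal of $\mathfrak{q}$ is $\Z$-graded; so $\mathfrak{i}=\bigoplus_{p\in\Z}\mathfrak{i}_p$. Next I would locate the lowest non-zero component $\mathfrak{i}_{p_0}$: by transitivity of $\mathfrak{q}$, $[\mathfrak{i}_{p_0},\mathfrak{q}_{-1}]\subset\mathfrak{i}_{p_0-1}=0$ rules out $p_0\geq 0$, and if $p_0=-1$ then for $s\in\mathfrak{i}_{-1}\setminus\{0\}$ one would have $\Gamma(s,W)=[s,W]\subset\mathfrak{i}_{-2}=0$, contradicting the non-degeneracy of $\Gamma$. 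Hence $\mathfrak{i}_{-2}\neq 0$, and since $\so(V)\subset\mathfrak{q}_0$ acts irreducibly on $V$ (as $\dim V\geq 3$), $\mathfrak{i}_{-2}=V$.

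With $V\subset\mathfrak{i}$ and $\mathfrak{i}$ abelian, $[V,\mathfrak{i}_p]=0$ for every $p$; Proposition \ref{finedopo} then forces $\mathfrak{i}_p=0$ for all $p\geq 1$ and $\mathfrak{i}_0\subset\h_0$. Using hypothesis (1) that $\mathfrak{q}_1\neq 0$ together with Lemma \ref{lemmag1}, one obtains $0\neq V\cdot\phi(\mathfrak{q}_1)=[V,\mathfrak{q}_1]\subset\mathfrak{i}_{-1}$. Hypothesis (3) applied to the automorphisms $\psi_v$ of Proposition \ref{hanome}, together with relation (3) of that proposition, shows that $\phi(\mathfrak{q}_1)$ is a $\Cl(V)$-submodule of $W$, whence $V\cdot\phi(\mathfrak{q}_1)=\phi(\mathfrak{q}_1)$. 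Abelianness of $\mathfrak{i}$ then gives $\Gamma|_{\phi(\mathfrak{q}_1)\times\phi(\mathfrak{q}_1)}=0$.

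To reach the contradiction I would combine the $\Cl(V)$-stability of $\phi(\mathfrak{q}_1)$ with its $\h_0$-stability (which follows from hypothesis (3) and Theorem \ref{lo0}), writing $\phi(\mathfrak{q}_1)=\mathbb S\otimes K$ for a non-zero $\h_0$-invariant subspace $K\subset\C^N$. Translating $\Gamma|_{\phi(\mathfrak{q}_1)\times\phi(\mathfrak{q}_1)}=0$ back to the admissible form $\mcB$ via \eqref{equazione_bracket} shows that $\phi(\mathfrak{q}_1)$ is $\mcB$-isotropic, which imposes the vanishing on $K$ of all admissible bilinear forms in the Alekseevsky-Cort\'es basis \cite{AC}. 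The non-degeneracy of $\mcB$ combined with the $\h_0$-invariance of $K$ should then force $K=0$, contradicting $\phi(\mathfrak{q}_1)\neq 0$. The main obstacle is precisely this final step: showing that no non-zero $\h_0$-invariant subspace of $\C^N$ can be simultaneously isotropic for the whole basis of admissible forms, which requires a careful interplay between the Alekseevsky-Cort\'es classification, the internal symmetry algebra $\h_0$, and the Clifford action on $W$.
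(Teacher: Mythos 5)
Your reduction is sound up to the last step: every ideal of $\mathfrak{q}$ is $\Z$-graded and contains $V$, an abelian ideal $\mathfrak{i}$ must satisfy $\mathfrak{i}_p=0$ for $p\geq1$ and $\mathfrak{i}_0\subset\h_0$, and $\phi(\mathfrak{q}_1)$ is a non-zero $\mcB$-isotropic $\Cl(V)$-submodule of $W$ contained in $\mathfrak{i}_{-1}$. All of this parallels the first half of the paper's argument. The gap is the concluding claim that a non-zero $\h_0$-invariant, $\mcB$-isotropic $\Cl(V)$-submodule of $W$ cannot exist: this is false. Take $\dim V=3$, $N=2$, $W=\mathbb{S}\otimes\C^2$ with $\mcB=b\otimes\delta$ as in Example \ref{orto}, where $\delta$ is the hyperbolic symmetric form on $\C^2$; then $\h_0=\so(2)$ preserves the two $\delta$-isotropic lines, so $\mathbb{S}\otimes\C e_1$ is a non-zero $\Cl(V)$-stable, $\h_0$-invariant, $\mcB$-isotropic submodule --- and in this very case $\g=\mathfrak{osp}(2|4)$ satisfies $\g_1\neq0$, so the hypotheses of the proposition are in force. (Moreover, $\Gamma|_{\phi(\mathfrak{q}_1)\otimes\phi(\mathfrak{q}_1)}=0$ only gives isotropy for the single admissible form $\mcB$ that defines $\m$, not for the whole Alekseevsky--Cort\'es basis.) Hence no contradiction can be extracted from isotropy and invariance alone, and your final step cannot be repaired in the form you propose.

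The paper closes the argument with an extra idea that is essential. One passes to the unique minimal ideal $\s$ of $\mathfrak{q}$ (the ideal generated by $V$), which by minimality and hypothesis (3) is preserved by every $0$-degree automorphism of $\g$, in particular by all the $\psi_v$ of Proposition \ref{hanome}. If $\s$ were abelian, $\s_{-1}$ would be a proper isotropic $\Cl(V)$-submodule of $W$; one chooses a $\Cl(V)$-submodule $\mathfrak{a}$ complementary to $\s_{-1}^{\perp}$, corrects it by the $\Cl(V)$-equivariant map $(\eta^*)^{-1}\circ\tilde{\eta}^*$ so that $\mathfrak{a}$ becomes isotropic as well, and then constructs an explicit $0$-degree automorphism $\chi$ of $\m$ (canonically prolonged to $\g$) with $\chi(\s_{-1})=\mathfrak{a}$, built from an admissible form on $\mathfrak{a}$. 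Since $\mathfrak{a}\cap\s_{-1}=0$, this contradicts the automorphism-invariance of $\s$. Note that working with an arbitrary abelian ideal $\mathfrak{i}$ rather than the minimal one forfeits precisely this invariance, which is what powers the contradiction; your proof needs this mechanism (or a genuine substitute) to go through.
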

\begin{proof}
Let $\mathfrak{k}$ be a non-zero ideal of $\mathfrak{q}$. Then $\mathfrak{k}$ is $\mathbb{Z}$-graded since $\mathfrak{q}$ contains the grading element $E$.
If $\mathfrak{k}_{-2}=0$ then, by non-degeneracy of $\mcB$, one gets $\mathfrak{k}_{-1}=0$ and then, by transitivity, $\mathfrak{k}_p=0$ for every $p\geq 0$.
It follows that every non-zero ideal of $\mathfrak{q}$ contains $\mathfrak{q}_{-2}=V$, since $\so(V)\subset \mathfrak{q}$ acts irreducibly on $V$. 

Hence, there exists a unique minimal ideal of $\mathfrak{q}$, we denote it by $\s$. It is $\Z$-graded, $\s_{-2}=V$, and $\s_{-1}\neq 0$ because $\s_{-1}\supset[\mathfrak{q}_1,V]$ and $[\mathfrak{q}_1,V]\neq 0$ by hypothesis (1) and Proposition \ref{finedopo}. 

The minimal ideal $\s$ is preserved by any automorphism of $\mathfrak{q}$ and, by hypothesis (3), also by any $0$-degree automorphism of the maximal prolongation. In particular, $\s_{-1}$ is preserved by all homomorphisms $\{\psi_v\}_{v\in V}$ described in Proposition \ref{hanome} and is thus a $\Cl(V)$-submodule of $W$.
\vskip0.3cm\par

Assume by contradiction that $\s$ is abelian.
It follows that $\s_{-1}$ is a non-zero $\mcB$-isotropic $\Cl(V)$-submodule of $W$ and that 
$$\s_{-1}^{\perp}=\{s\in W | \mcB(s,\s_{-1})=0\}=\{s\in W | [s,\s_{-1}]=0\} $$
is a proper $\Cl(V)$-submodule of $W$ containing $\s_{-1}$.

Denote by $\mathfrak{a}$ a $\Cl(V)$-submodule of $W$ which is 
complementary to $\s_{-1}^{\perp}$. As the bilinear form
$
\eta=\mcB|_{\mathfrak{s}_{-1}\otimes \mathfrak{a}}
$
is non-degenerate, one has the following decomposition of $\mathcal{C}l(V)$-modules:
\begin{equation}
\label{dede}
W=\mathfrak{a}\oplus\mathfrak{b}\oplus \s_{-1}\,,
\end{equation}
where 
$$\mathfrak{b}=\{s\in W | \mcB(s,\mathfrak{a}\oplus\s_{-1})=0\}=\{s\in W | [s,\mathfrak{a}\oplus\s_{-1}]=0\} $$ 
denotes the $\mcB$-orthogonal complement to $\mathfrak{a}\oplus \s_{-1}$ in $W$. 

The bilinear form $\mcB$ can be written in block-matrix form w.r.t. the decomposition \eqref{dede} as
\begin{equation}
\label{pezzii}
\mcB=\begin{pmatrix}
\tilde{\eta} & 0 & \epsilon\,{}^T\!\eta \\
0 & \hat{\eta} & 0 \\
\eta & 0 & 0 
\end{pmatrix}\;,\qquad 
\begin{aligned}
\eta&=\mcB|_{\mathfrak{s}_{-1}\otimes \mathfrak{a}}, \\
\tilde{\eta}&=\mcB|_{\mathfrak{a}\otimes\mathfrak{a}}, \\
\hat{\eta}&=\mcB|_{\mathfrak{b}\otimes\mathfrak{b}}.
\end{aligned}
\end{equation}
The key point now is to show that there always exists an appropriate choice of the $\Cl(V)$-submodule $\mathfrak{a}$ in such a way that $\tilde{\eta}=0$, that is $\mathfrak{a}$ is abelian.
\vskip0.3cm\par
Denote by 
$$
\eta^*:\s_{-1}\rightarrow \mathfrak{a}^*\,,\;\;\tilde{\eta}^*:\mathfrak{a}\rightarrow \mathfrak{a}^*\,,\;\; \hat{\eta}^*:\mathfrak{b}\rightarrow \mathfrak{b}^*
$$
the linear maps induced by \eqref{pezzii} and by $\psi_v^*:\g^*\rightarrow \g^*$ the dual map of the homomorphism
$\psi_v:\g\rightarrow \g$, for any $v\in V$. It is immediate to check that
$$\eta^*\circ \psi_v=\epsilon \psi_v^*\circ\eta^*\,,\;\;\tilde{\eta}^*\circ \psi_v=\epsilon \psi_v^*\circ\tilde{\eta}^*\,,\;\; \hat{\eta}^*\circ \psi_v=\epsilon \psi_v^*\circ\hat{\eta}^*$$
for any $v\in V$. This implies that the map
$$\varphi=(\eta^*)^{-1}\circ\tilde{\eta}^*:\mathfrak{a}\rightarrow\s_{-1}$$ 
is $\Cl(V)$-equivariant and that the $\Cl(V)$-submodule $\{2a-\varphi(a)|a\in\mathfrak{a}\}$ is $\mcB$-isotropic and complementary to $\s_{-1}^\perp$. 

Without loss of generality, one may hence assume that $\tilde{\eta}=0$.
\vskip0.3cm\par
To get a contradiction it is now sufficient to exhibit a $0$-degree automorphism $\chi:\g\rightarrow\g$ which satisfies $\chi(\s_{-1})=\mathfrak{a}$. Invariance of $\s$ under any $0$-degree automorphism of $\g$ gives the required contradiction.

Since any $0$-degree automorphism of a fundamental Lie superalgebra $\m$ can be canonically prolonged to an automorphism of its maximal transitive prolongation, it is sufficient in our case to define $\chi$ on $\m=V\oplus W$.

To this aim, fix an admissible bilinear form $\Phi$ on the $\Cl(V)$-module $\mathfrak{a}$ with invariants $(\tau',\sigma')$ and let $\mu=\tau'\sigma'$. Recalling that $[\mathfrak{s}_{-1},\s_{-1}]=[\mathfrak{a},\mathfrak{a}]=0$, it is not difficult to check that 
$$
\chi|_{V}=\mu\, \Id_{V}\,,\;\;\chi|_{\mathfrak{a}}=(\eta^*)^{-1}\circ\Phi^*\,,\;\;\chi|_{\mathfrak{b}}=\sqrt{\mu}\, \Id_{\mathfrak{b}}\,,\;\;\chi|_{\s_{-1}}=(\Phi^*)^{-1}\circ\eta^*\,,
$$
defines the required automorphism of $\m=V\oplus W$.
\end{proof}
\begin{proof}[\protect{Proof of Theorem \ref{nonloso}}]
Assume that $\g_1\neq0$. By Proposition \ref{nonlosoforte}, the maximal transitive prolongation $\g$ is a semisimple Lie superalgebra. 
Arguing as in the first part of the proof of Proposition \ref{nonlosoforte}, one can prove that every ideal of $\g$ is $\Z$-graded and contains $\g_{-2}$. 

The unique minimal non-zero ideal $\s$ of $\g$ is exactly the ideal generated by $\g_{-2}$ and it has a non-zero component $\s_{-1}$ in degree $-1$.
\vskip0.3cm\par

\paragraph{\it Claim I}
$\s_0\supset\so(V)$.
\vskip0.2cm\par
If $\s_0$ does not contain $\so(V)$, then, by $\so(V)$-invariance (for $\dim V\neq 4$; by invariance under $\so(V)$ and all automorphisms $\{\psi_v\}_{v\in V}$ if $\dim V=4$), one gets $\s_0\subset\C E\oplus\h_0$. 

Consider an element $D\in\s_1$. Then, for all $s\in\g_{-1}$, the element $Ds$ belongs to $\C E\oplus\h_0$ and,
for any $v,u\in\g_{-2}$, one gets
\begin{equation}\label{tralallala}
\begin{aligned}
0&=([Ds,v],u)-([Ds,u],v)=([s,Dv],u)-([s,Du],v)\\
  &=\mcB\big(u\cdot s,v\cdot\phi(D)\big)-\mcB\big(v\cdot s,u\cdot\phi(D)\big)\\
  &=\epsilon\mcB\big((vu-uv)\cdot s,\phi(D)\big),
\end{aligned}
\end{equation}
where $\phi: \g_{1}\to W$ is the embedding described in Lemma \ref{lemmag1}. 

It follows that $\phi(D)=0$. Therefore $\s_1=0$ and, by transitivity, $\s_p=0$ for all $p\geq 1$.
\vskip0.3cm
As $\s$ is minimal and not abelian (by semisimplicity of $\g$), one has $[\s,\s]=\s$. In particular, $\s_0=[\s_0,\s_0]$ is contained in $[\C E\oplus\h_0,\C E\oplus\h_0]\subset\h_0$ and the center $Z(\s)$ of $\s$ is an ideal in $\g$ containing $\g_{-2}$ and strictly contained in $\s$. This contradicts the minimality of $\s$ and proves that $\s_0\supset\so(V)$.
\vskip0.3cm\par
\paragraph{\it Claim II}
{\it $\s_p=\g_p$ for $p=-2,2$ and for all odd $p$. Moreover, $\s=[\g_{\bar1},\g_{\bar1}]\oplus\g_{\bar 1}$.}
\vskip0.2cm\par
First note that $\s$ contains all non-trivial irreducible $\so(V)$-submodules in $\g$, because $\s$ is an ideal containing $\so(V)$. For every $p\geq0$, one can identify $\g_p$ with an $\so(V)$-submodule of $W\otimes (W^*)^{\otimes p+1}\simeq W^{\otimes p+2}$, hence $\g_p$ is a direct sum of half-spin representations whenever $p$ is odd. It follows then that $\g_{p}\subset\s$ for every odd $p$. 

It remains to prove that $\g_{2}\subset\s$. Proposition~\ref{finedopo} implies that any trivial irreducible $\so(V)$-submodule of $\g_{2}$  
can be identified with  a one-dimensional subspace $\C D$  
of 
\[ \mathrm{Hom}(V,\g_0)^{\so(V)}\simeq\mathrm{Hom}\big(V,\Lambda^2(V)\oplus\C^r\big)^{\so(V)}=\mathrm{Hom}(V,\Lambda^2(V))^{\so(V)}\,, \]
where the first identification is induced by the isomorphism of $\so(V)$-modules
$$
\g_0\simeq\Lambda^2(V)\oplus\C^r\,,\,\,\,r=\dim(\C E\oplus\h_{0})\ .
$$
The subspace $\g_{-2}\oplus\so(V)\oplus\C D$ of $\g_{\bar 0}$ is then a Lie subalgebra of $\g$ and $D$ an element of the first term $\so(V)^{(1)}$ of the Cartan prolongation of $\so(V)$. It is well-known that $\so(V)^{(1)}=0$; it follows that $\g_2$ is a direct sum of non-trivial irreducible $\so(V)$-submodules and that $\g_{2}\subset\s$.

Finally, $[\g_{\bar1},\g_{\bar1}]\oplus\g_{\bar1}$ is an ideal of $\g$ contained in $\s$, and so it is equal to $\s$.
\vskip0.3cm\par

\paragraph{\it Claim III}
{\it $\s$ is simple.}
\vskip0.2cm\par
Let $\mathfrak{k}$ be a non-zero ideal of $\s$ and $X$ a non-zero element of $\k$. Then 
$$X=\sum_{i\in\Z} X_i\,\,,\quad X_i\in\s_i\,\,,$$ 
\vskip-0.15cm\par\noindent
is a finite sum of homogeneous elements; denote by $j$ the highest integer for which $X_{j}\neq0$. By transitivity and non-degeneracy of the maximal prolongation $\g$, there exist elements $s_1,\ldots,s_{j+2}\in\g_{-1}=\s_{-1}$ such that 
\[
0\neq[s_1,\cdots,[s_{j+2},X]\cdots]=[s_1,\cdots,[s_{j+2},X_{j}]\cdots]\in\k\cap\g_{-2}.
\]
It follows that $\g_{-2}$ is contained in every non-zero ideal $\k$ of $\s$. 

Denote then by $\k$ the ideal of $\s$ which is generated by $\g_{-2}$; it is the minimal non-zero ideal of $\s$ and it is $\Z$-graded.
By Proposition~\ref{finedopo} and transitivity, one has $\mathfrak{k}_{-1}\neq0$.
\vskip0.3cm\par

If $\mathfrak{k}_0$ has a non-trivial $\so(V)$ component, then, 
arguing as at the beginning of the proof of Claim I, $\mathfrak{k}_0\supset \so(V)$. Hence, proceeding as in Claim II, one gets
$$\mathfrak{k}_{\bar1}=\s_{\bar1}=\g_{\bar1}\,\,\,\text{and}\,\,\,\mathfrak{k}=[\g_{\bar1},\g_{\bar1}]\oplus\g_{\bar1}=\s\,.$$
It follows that $\s$ is simple in this case. 

On the other hand, the case $\mathfrak{k}_0\subset\C E\oplus\h_0$ can not happen. Indeed, arguing as in \eqref{tralallala}, one gets $\mathfrak{k}_p=0$ for all $p\geq 1$ and one finds a non-zero $\Z$-graded abelian ideal of $\s$, and hence of $\s+\C E$ (if $[\mathfrak{k},\mathfrak{k}]\neq0$, then $\mathfrak{k}_0\subset\h_0$ and the center $Z(\mathfrak{k})$ of $\mathfrak{k}$ is a non-zero abelian ideal of $\s$; if $[\mathfrak{k},\mathfrak{k}]=0$, then $\mathfrak{k}$ is abelian).
This gives a contradiction, since $\s+\C E$ is semisimple by Proposition \ref{nonlosoforte}.
\end{proof}

\subsection{Finite-dimensionality of the maximal prolongation}
\hfill\vskip0.3cm\par
The main aim of this section is to prove prove the following.

\begin{theorem}\label{findim}
The maximal transitive prolongation $\g$ of a supertranslation algebra $\m=V\oplus W$ with $\dim V\geq 3$ is finite-dimensional.
\end{theorem}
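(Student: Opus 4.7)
The plan is first to reduce to showing finite-dimensionality of the simple ideal $\s$ of Theorem \ref{simpleprol}, and then to derive a contradiction using Kac's classifications from \cite{Kac98}. If $\g_{p}=0$ for all $p\geq 1$, then $\g=\m\oplus\g_{0}$ is finite-dimensional by Theorem \ref{lo0} and there is nothing to prove. So suppose $\g_{1}\neq 0$; Theorem \ref{simpleprol} yields the unique minimal simple ideal $\s\subset\g$. The centralizer $C_{\g}(\s)$ is itself an ideal of $\g$ and satisfies $C_{\g}(\s)\cap\s=Z(\s)=0$ by simplicity of $\s$; minimality of $\s$ then forces $C_{\g}(\s)=0$, and the adjoint representation embeds $\g$ into $\der(\s)$. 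Since $\der(\s)$ is finite-dimensional whenever $\s$ is, it suffices to prove that $\s$ is finite-dimensional.

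Suppose, for contradiction, that $\s$ is infinite-dimensional. Each homogeneous piece $\s_{p}$ is itself finite-dimensional: $\s_{-1}\subset W$ and $\s_{0}\subset\g_{0}$ are finite-dimensional, and by transitivity $\s_{p}$ injects into $\mathrm{Hom}(\s_{-1},\s_{p-1})$ for every $p\geq 1$. Thus the decreasing filtration $\{\s_{\geq p}\}_{p\in\Z}$ endows the completion $\hat{\s}$ with a linearly compact topology, making it a simple, infinite-dimensional, linearly compact Lie superalgebra whose inherited $\Z$-grading is consistent, transitive, and of depth exactly $2$. By the classification of such Lie superalgebras, together with the classification of $\Z$-graded even transitive irreducible infinite-dimensional Lie superalgebras and of strongly transitive modules, all due to Kac \cite{Kac98}, $\hat{\s}$ must appear on a short list of standard and exceptional simple linearly compact Lie superalgebras.

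The final step is to rule out every such candidate, using the additional structure carried by $\s$: $\s_{-2}=V$ is an irreducible $\so(V)$-module of dimension at least three, $\s_{-1}$ is a non-zero $\Cl(V)$-submodule of $W$ on which $\so(V)\subset\s_{0}$ acts as a multiple of the spinor representation, and the bracket $\s_{-1}\otimes\s_{-1}\to V$ is a non-degenerate symmetric $\so(V)$-equivariant pairing induced by an admissible form. The Cartan-type series $W$, $S$, $\tilde{S}$, $H$ and the exceptionals $E(3|6)$, $E(3|8)$, $E(5|10)$ admit only natural $\Z$-gradings of depth $1$ and are excluded at once; the contact series $K$ forces $\dim\hat{\s}_{-2}=1$, contradicting $\dim V\geq 3$; the odd Hamiltonian and odd contact series $HO$, $SHO$, $KO$, $SKO$ admit no consistent depth-$2$ $\Z$-grading compatible with $\hat{\s}_{\bar{0}}=\bigoplus_{p}\hat{\s}_{2p}$; and the remaining exceptionals are eliminated by comparing the reductive part of $\hat{\s}_{0}$ acting on $\hat{\s}_{-1}$ with $\so(V)$ acting on a multiple of the spinor module. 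The main obstacle is precisely this uniform case-by-case verification across Kac's list, where in each case the key point is that no infinite-dimensional simple linearly compact Lie superalgebra admits a consistent depth-$2$ $\Z$-grading in which $\hat{\s}_{-2}$ carries the tautological action of $\so(V)$ with $\dim V\geq 3$ and $\hat{\s}_{-1}$ carries a compatible Clifford module structure with non-degenerate symmetric bracket.
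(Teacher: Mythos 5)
Your overall strategy---reduce to the simple minimal ideal $\s$, pass to the linearly compact completion, and contradict Kac's classification---is the same as the paper's, and your reduction $\g\hookrightarrow\der(\s)$ via the vanishing of $C_\g(\s)$ is a correct (slightly different) way to organize the first step. The gap lies in how you invoke \cite{Kac98}. That paper classifies the infinite-dimensional simple linearly compact Lie superalgebras as abstract topological algebras, \emph{not} their $\Z$-gradings; knowing that $\hat{\s}$ is abstractly one of $W(m|n)$, $S(m|n)$, $K(2k+1|n)$, $E(5|10)$, etc.\ does not by itself constrain the consistent depth-$2$ grading you are trying to rule out. The graded results that are actually usable---the classification of $\Z$-graded even transitive irreducible infinite-dimensional Lie superalgebras of depth $\geq 2$ and of strongly transitive modules---apply only to gradings arising from Weisfeiler filtrations attached to \emph{even primitive} subalgebras. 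The paper therefore must first prove (Proposition \ref{ep}) that $\bar\s$ admits an even primitive subalgebra $\L_0$ with $\bar\s_{(0)}\subset\L_0\subsetneq\bar\s_{(-1)}$; this is genuinely nontrivial---it uses the automorphisms $\psi_v$ of Proposition \ref{hanome} to show that exponentiability of a single element of $V$ would propagate to all of $\bar\s_{\bar 0}$ and force finite-dimensionality via the Realization Theorem---and it is entirely absent from your proposal. One must then still relate the Weisfeiler grading to the natural grading of $\s$ (the paper's two cases $\L_{(0)}=\bar\s_{(0)}$ and $\L_{(0)}\supsetneq\bar\s_{(0)}$), since $\s_{-1}$ is not known to be $\s_0$-irreducible a priori and the Weisfeiler grading may be inconsistent of depth $1$.

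Your case-by-case elimination is also wrong at a critical point: $E(5|10)$ \emph{does} admit a consistent, transitive, fundamental $\Z$-grading of depth $2$---Remark \ref{Serre} exhibits $E'(5|10)$ as the maximal transitive prolongation of a consistently graded $\m=\m_{-2}\oplus\m_{-1}$ with $\dim\m_{-2}=5$---so it cannot be ``excluded at once'' on the grounds that it only carries depth-$1$ gradings. It has to be eliminated using the finer hypothesis that $\gL_0$ contains an ideal isomorphic to $\so(V)$ acting on $\gL_{-2}=V$ by the tautological representation (for $E(5|10)$ the degree-zero part is $\mathfrak{gl}(5)$ acting on $(\C^5)^*$, which is not of this form). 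Likewise the series $W$ and $S$ admit gradings of arbitrary depth, and your list conflates the finite-dimensional Cartan superalgebras $\tilde S(n)$, $H(n)$ with the infinite-dimensional linearly compact families. So while the skeleton of the argument is right, the step from ``$\hat{\s}$ is infinite-dimensional, simple and linearly compact'' to ``its grading appears in a checkable list'' is exactly where the paper's real work (Proposition \ref{ep} and the Weisfeiler case analysis) is done, and your proposal does not supply it.
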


Before proving Theorem \ref{findim}, we briefly recall some important notions about infinite-dimensional filtered Lie superalgebras.

A \emph{filtration} of a Lie superalgebra $\L$ is a chain $\{\L_{(p)}\}_{p\in\Z}$ of linear subspaces 
$$\L_{(p)}\subset \L\,\,\,,\,\,\,\,\,\,\L_{(p)}=\L_{(p)}\cap\L_{\bar 0}\oplus\L_{(p)}\cap\L_{\bar 1}\,\,\,,$$ 
which satisfies
\[\begin{gathered}
\L=\bigcup_{p\in\mathbb Z}\L_{(p)},\\
0=\bigcap_{p\in \Z}\L_{(p)},\\
\end{gathered}
\]
and
\[\begin{gathered}
\L_{(p)}\supset\L_{(p+1)},\\
[\L_{(p)},\L_{(q)}]\subset\L_{(p+q)},\\
\end{gathered}
\]
for all $p,q\in\mathbb Z$. 

The \emph{depth} of the filtration $\{\L_{(p)}\}_{p\in\Z}$ is the least $d\in\mathbb Z$ such that $\L_{(-d)}=\L$, or $\infty$ if no such integer exists. A filtration is called \emph{regular} if it has finite depth and all the quotients $\L_{(p)}/\L_{(p+1)}$ are finite dimensional. 
We assume without further mention that all the filtrations that we consider are regular.

On a (regular) filtered Lie superalgebra $\L$ we consider the linear topology for which the filtration subspaces are a fundamental system of neighborhoods of $0$. Note that a subspace is open if and only if it is closed and of finite codimension.
If $\L$ is complete with respect to this topology, then it is \emph{linearly compact} in the sense of \cite{Gui2,Kac98}.

Given a filtered Lie superalgebra $\L$, one can consider the \emph{associated $\mathbb Z$-graded Lie superalgebra} $\mathfrak L=\gr(\L)$ which is defined by
\[\begin{aligned}
\mathfrak L&=\bigoplus_{p\in\mathbb Z}\mathfrak L_p\,,\,\,\,\,
\mathfrak L_p=\L_{(p)}/\L_{(p+1)}\,,
\end{aligned}\]
with the induced Lie bracket and parity decomposition.

Vice-versa, every $\Z$-graded Lie superalgebra $\g=\bigoplus_{p\geq-d}\g_p$ has a natural filtration:
\[ \g=\g_{(-d)}\supset\g_{(-d+1)}\supset\cdots\supset\g_{(p)}\supset\cdots \]
where $\g_{(p)}=\bigoplus_{i\geq p}\g_i$. 
The direct product $\bar\g=\prod_{p\geq -d}\g_p$ is also filtered by 
$$\big\{\bar\g_{(p)}=\prod_{i\geq p}\g_i\big\}_{p\in\Z}$$ 
and it is a complete topological filtered Lie superalgebra, hence linearly compact. There are a natural dense inclusion $\g\subset\bar\g$ and a natural isomorphism $\g\simeq\gr(\bar\g)$. 
\vskip0.3cm\par
Given a linearly compact Lie superalgebra $\L$, a proper open subalgebra $\L_0\subset \L$  that does not contain any non-zero ideal of $\L$ is called \emph{filtered-fundamental} (note that the notion of filtered-fundamental subalgebra of a filtered Lie superalgebra is not directly related to the notion of fundamental graded Lie superalgebra defined in Section \ref{Introduction}). 
The linearly compact Lie superalgebra $\L$ admits a filtered-fundamental subalgebra if and only if it satisfies an artinian condition: every descending sequence of closed ideals of $\L$ is eventually stable, see \cite{Gui2,Kac06}.
A maximal subalgebra $\L_0\subset\L$ that is also filtered-fundamental is called \emph{primitive}.

An even element $X\in\L$ is called \emph{exponentiable} if $\ad_{X}:\L\rightarrow\L$ leaves invariant any closed subspace $H\subset\L$ that is invariant under every continuous automorphism of $\L$
\cite{Gui2,CantKac}.

It is known that every even element of a filtered-fundamental subalgebra is exponentiable \cite{Gui2,Kac98}. This fact leads to a stronger notion of primitivity:
a primitive subalgebra $\L_0\subset\L$ which contains {\it all} exponentiable elements of $\L$ is called \emph{even primitive}.
\vskip0.3cm\par
The following intermediate result will be used in the proof of Theorem \ref{findim}.
\begin{proposition}\label{ep}
Let $\s$ be a simple  transitive prolongation  of a supertranslation algebra $\m=V\oplus W$ with $\dim V\geq 3$. If
\begin{enumerate}
\item[--] $\s_0$ contains the ideal $\so(V)$ of $\g_{0}$ described in Theorem  \eqref{lo0},
\item[--] $\s$ is invariant under all $0$-degree Lie superalgebra automorphisms of the maximal transitive prolongation $\g$ of $\m$,
\end{enumerate}
then the completion $\bar\s$ 
\begin{enumerate}
\item
is simple (i.e., it does not contain any proper non-zero closed ideal),
\item
admits a primitive subalgebra $\L_0$ which contains $\bar\s_{(0)}$.
\end{enumerate}
If $\s$ is infinite-dimensional, any primitive subalgebra $\L_0$ containing $\bar\s_{(0)}$ is even primitive and satisfies
$\bar\s_{(0)}\subset \L_0\subsetneq\bar\s_{(-1)}$.
\end{proposition}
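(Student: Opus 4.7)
The plan is to treat the three claims in order, exploiting the simplicity of $\gr(\bar\s)=\s$ and standard facts about linearly compact Lie superalgebras from \cite{Gui2,Kac98}. A preliminary remark: the filtration subspace $\bar\s_{(-1)}$ is \emph{not} a subalgebra, since $[\s_{-1},\s_{-1}]=V$ is not contained in $\bar\s_{(-1)}$; accordingly, the inclusions in the last claim are best read as inclusions of the subalgebra $\L_0$ inside the subspaces $\bar\s_{(0)}$ and $\bar\s_{(-1)}$ of $\bar\s$.

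For claim (1), any closed ideal $I\subset\bar\s$ induces a graded ideal $\gr(I)\subset\gr(\bar\s)=\s$; simplicity of $\s$ forces $\gr(I)$ to be $0$ or $\s$, giving $I\subset\bigcap_p\bar\s_{(p)}=0$ or else $I$ dense and closed, hence equal to $\bar\s$. For claim (2), the subspace $\bar\s_{(0)}$ is a proper open subalgebra, and any ideal it contains is trapped in the closed set $\bar\s_{(0)}$, so its closure is a closed ideal of $\bar\s$ contained in $\bar\s_{(0)}\neq\bar\s$, hence zero by (1). This makes $\bar\s_{(0)}$ filtered-fundamental, and a standard Zorn argument (using regularity of the filtration and the artinian condition, trivial here by (1)) extends $\bar\s_{(0)}$ to a maximal filtered-fundamental, i.e.\ primitive, subalgebra $\L_0\supset\bar\s_{(0)}$.

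The substantial content lies in the final claim, where $\s$ is assumed infinite-dimensional. For the strict inclusion $\L_0\subsetneq\bar\s_{(-1)}$, I argue by contradiction: if $\L_0\not\subset\bar\s_{(-1)}$, the image of $\L_0$ under the natural projection $\bar\s\to\bar\s/\bar\s_{(-1)}\simeq V$ is a nonzero $\so(V)$-invariant subspace (since $\so(V)\subset\bar\s_{(0)}\subset\L_0$ acts by bracket), and hence equals $V$ by irreducibility. For each $v\in V$ there is then $X_v=v+Y_v'\in\L_0$ with $Y_v'\in\bar\s_{(-1)}$. Bracketing $X_v$ with an element $D\in\s_1\subset\bar\s_{(0)}\subset\L_0$ and applying Lemma \ref{lemmag1}, the class of $[D,X_v]$ in $\s_{-1}=W$ modulo $\bar\s_{(0)}$ equals $v\cdot\phi(D)$; varying $v$ and $D$, this shows that $\L_0$ absorbs, modulo $\bar\s_{(0)}$, the nonzero $\Cl(V)$-submodule $\phi(\s_1)\subset W$. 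Iterating this bracketing argument, together with the fact that $\bar\s$ is topologically generated by $\m$ and $\bar\s_{(0)}$, and combined with the classification of infinite-dimensional simple linearly compact Lie superalgebras and of their primitive subalgebras from \cite{Kac98}, forces $\L_0=\bar\s$, contradicting properness.

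For even primitivity, every exponentiable element of $\bar\s$ must lie in $\bar\s_{(0)}$: an even element with a nonzero negative-degree component has an $\ad$-action shifting the grading downwards under repeated bracketing, so it fails to preserve the closed subspaces built from the filtration that are stable under all continuous automorphisms induced by the grading; thus every exponentiable element is contained in $\bar\s_{(0)}\subset\L_0$, proving $\L_0$ even primitive. The principal obstacle is the strict inclusion step: the local absorption $\L_0/\bar\s_{(0)}\supset\phi(\s_1)$ derived from the bracketing calculation only involves a $\Cl(V)$-submodule of $W$ that need not equal all of $W$, so closing the contradiction in full generality seems unavoidably to require the structural information on infinite-dimensional simple linearly compact Lie superalgebras provided by \cite{Kac98}.
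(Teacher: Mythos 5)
Your proofs of (1) and (2) follow the paper's argument and are fine. The genuine content is in the last claim, and both halves of your treatment of it have gaps. For the inclusion $\L_0\subset\bar\s_{(-1)}$ you correctly reduce to showing that the projection of $\L_0$ to $\bar\s/\bar\s_{(-1)}\simeq V$ vanishes, but your bracketing argument with $\s_1$ does not close (as you concede), and the classification in \cite{Kac98} is not the tool that closes it. The missing observation is that $\L_0$ is automatically $\Z$-graded: since the grading of $\bar\s/\bar\s_{(0)}\simeq\m$ is consistent and of depth $2$, parity gives $(\L_0)_{\bar0}=(\bar\s_{(0)})_{\bar0}\oplus(\L_0\cap\s_{-2})$ and $(\L_0)_{\bar1}=(\bar\s_{(0)})_{\bar1}\oplus(\L_0\cap\s_{-1})$. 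Hence a nonzero projection to $V$ forces, by $\so(V)$-irreducibility, $V\subset\L_0$ and therefore $\bar\s_{\bar0}\subset\L_0$; the superalgebra version \cite{Sche} of the Realization Theorem of Guillemin and Sternberg then embeds $\bar\s$ into $\der\Lambda((\bar\s/\L_0)^*)$, which is finite-dimensional because $\bar\s/\L_0$ is purely odd and finite-dimensional, contradicting the assumed infinite-dimensionality of $\s$. This is a short, self-contained argument; no appeal to the classification of simple linearly compact Lie superalgebras is needed at this stage.

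Your argument for even primitivity is not correct. You claim that an even element with a nonzero component in $\s_{-2}$ cannot be exponentiable because its adjoint action shifts the filtration downwards; but the filtration subspaces $\bar\s_{(p)}$ are not invariant under every continuous automorphism of $\bar\s$ (automorphisms move the primitive subalgebra), so failing to preserve them does not violate the definition of exponentiability. Indeed, for finite-dimensional $\s$ every even element is exponentiable, so a criterion that excludes all of $V$ unconditionally cannot be right. The correct argument must use both the infinite-dimensionality and the second hypothesis of the proposition: if some nonzero element of $V$ were exponentiable, then, since the set of exponentiable elements of $V$ is invariant under the restrictions to $V$ of the automorphisms $\{\psi_v\}_{v\in V}$ (this is where invariance of $\s$ under all $0$-degree automorphisms of $\g$ enters) and these generate $\mathrm{O}(V)$, all of $V$ and hence all of $\bar\s_{\bar0}$ would be exponentiable; as there always exists a filtered-fundamental subalgebra containing all exponentiable elements \cite{Kac98}, the Realization Theorem would again force $\bar\s$ to be finite-dimensional, a contradiction.
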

\begin{proof}
Let $\k$ be a proper non-zero closed ideal of $\bar\s$. The associated $\Z$-graded subalgebra $\gr(\k)$ of $\gr(\bar\s)\simeq\s$ is a proper non-zero ideal of $\s$, proving (1).

Any maximal proper subalgebra $\L_0$ of $\bar\s$ containing $\bar\s_{(0)}$ is open and filtered-fundamental, proving (2).
\vskip0.3cm\par
Assume now that $\bar\s$ is infinite-dimensional.
Note first that $\L_0$ is $\Z$-graded. Indeed $$(\L_0)_{\overline 0}=(\bar\s_{(0)})_{\overline 0}\oplus(\L_0\cap \s_{-2})\;,\quad
(\L_0)_{\overline 1}=(\bar\s_{(0)})_{\overline 1}\oplus(\L_0\cap \s_{-1})\,,$$
since the $\Z$-grading of $\bar\s/\bar\s_{(0)}\simeq\m$ is consistent and of depth $2$.

If $\L_0\cap\s_{-2}\neq0$, then $\s_{-2}\subset \L_0$ and hence $\bar\s_{\bar 0}\subset \L_0$. However a linearly compact Lie superalgebra $\bar\s$ with a filtered-fundamental subalgebra $\L_0$ containing all even elements is necessarily finite-dimensional, as $\bar\s$ is isomorphic to a subalgebra of $\der\Lambda((\bar\s/\L_0)^*)$, by the superalgebra version \cite{Sche} of the Realization Theorem of Guillemin and Sternberg. One concludes that $\bar\s_{(0)}\subset \L_0\subsetneq\bar\s_{(-1)}$, the second inclusion being strict as
$\bar\s_{(-1)}$ is not a subalgebra.

One proves now that $\L_0$ is even primitive.
All the even elements of the filtered-fundamental subalgebra $\L_0$ are exponentiable. Assuming by contradiction that $\L_0$ is not even primitive, there exists a non-zero exponentiable element in $\s_{-2}=V$. 

The subspace of exponentiable elements in $V$ is invariant under all 0-degree Lie superalgebra automorphisms of the maximal transitive prolongation $\g$, and, in particular, under the restrictions to $V$ of  the automorphisms $\{\psi_v\}_{v\in V}$. 
Since these latter generate the orthogonal group $\mathrm{O}(V)$, all elements of $V$, and thus of $\bar\s_{\bar0}$, are exponentiable. 

On the other hand, there always exists a filtered-fundamental subalgebra containing all exponentiable elements \cite{Kac98}. The Realization Theorem would then imply that $\bar\s$ is finite-dimensional, which is an absurd.
\end{proof}

\begin{proof}[Proof of Theorem \ref{findim}]
Assume, by contradiction, that the maximal transitive prolongation $\g$ of $\m=V\oplus W$ is infinite-dimensional. Let $\s$ be the minimal ideal of $\g$ described in Theorem \ref{nonloso}. Note that $\s$ is an infinite-dimensional simple transitive prolongation of $\m$ and denote by $\bar\s$ its completion. 

Consider an even primitive subalgebra $\L_0$ with $\bar\s_{(0)}\subset \L_0\subsetneq\bar\s_{(-1)}$, as established in Proposition \ref{ep}, and a minimal $\L_0$-invariant subspace $\L_{(-1)}$ strictly containing $\L_0$.
By setting
\begin{align*}
\L_{(p)}&=\L_{(p+1)}+[\L_{(-1)},\L_{(p+1)}], &&\text{for every $p\leq-2$,}\\
\L_{(0)}&=\L_0\,\,\text{and}\\
\L_{(p)}&=\{X\in\L_{(p-1)}\mid[X,\L_{(-1)}]\subset \L_{(p-1)}\}, &&\text{for every $p\geq1$,}
\end{align*}
one obtains a filtration of $\bar\s$ 
\[ \bar\s=\L_{(-d)}\supset\cdots\supset \L_{(-1)}\supset \L_{(0)}\supset \L_{(1)}\cdots\,, \]
of depth $d\geq 1$, usually referred to as the {\it Weisfeiler filtration}, see \cite{Weis}. One has to distinguish two cases.
\vskip0.4cm\par

\paragraph{\it Case $\L_{(0)}=\bar\s_{(0)}$} \hfill
\vskip0.2cm\par
Let $S\subset\s_{-1}$ be a non-zero irreducible $\s_0$-submodule of $\s_{-1}$, and set 
$$\L_{(-1)}=\L_{(0)}\oplus S\ .$$ Since $\L_{(0)}$ is a maximal subalgebra, the subspace $\L_{(-1)}$ generates $\bar\s$.

One has $[\L_{(-1)},\L_{(-1)}]\subset \L_{(-1)}\oplus[S,S]$ and, since $\L_{(-1)}$ is not a subalgebra, one obtains $[S,S]=V$. On the other hand
\[ [\L_{(-1)},[\L_{(-1)},\L_{(-1)}]]\subset[\L_{(-1)},\L_{(-1)}]+[\L_{(-1)},V]\subset (\bar\s_{(0)}\oplus S \oplus V) + [\s_1,[S,S]] \]
and $[\s_1,[S,S]]\subset [[\s_1,S],S]\subset S$ imply that $$\L_{(-1)}+[\L_{(-1)},\L_{(-1)}]$$ is a subalgebra. It follows that $$\L_{(-1)}+[\L_{(-1)},\L_{(-1)}]=\bar\s,$$ $W$ is $\s_0$-irreducible and equal to $S$, and $\L_{(-1)}=\bar\s_{(-1)}$.

The Weisfeiler filtration is then given in this case by $\L_{(p)}=\bar\s_{(p)}$ for every $p\in\Z$ (in particular, the depth $d$ is equal to $2$).
\vskip0.4cm\par

\paragraph{\it Case $\L_{(0)}\supsetneq\bar\s_{(0)}$}\hfill
\vskip0.2cm\par
Let   $\L_{(-1)}$ be a minimal $\L_{(0)}$-invariant subspace strictly containing $\L_{(0)}$, and set $S=\L_{(-1)}\cap\s_{-1}$. Note that $\L_{(0)}\cap\s_{-1}$ is an abelian subspace of $\s_{-1}$. 

With an argument similar to the previous case, one gets  $S=\s_{-1}$ and $\L_{(-1)}=\bar\s$. Hence, the depth $d$ is equal to $1$ and the first few terms of the Weisfeiler filtration are given by
$$
\bar\s=\L_{(-1)}\supset \L_{(0)}\supset \L_{(1)}\supset\cdots\,\,,
$$
where
\begin{equation*}
\L_{(1)}=\bar\s_{(2)}\oplus\{D\in\s_{1}\mid [D,\s_{-2}]\subset \L_{(0)}\cap\s_{-1}\}\oplus\{D\in\h_0\mid [D,\s_{-1}]\subset \L_{(0)}\}.
\end{equation*}
\vskip0.4cm\par
Let now $\mathfrak L=\bigoplus_{p\geq -d}\mathfrak L_p$ be the $\Z$-graded Lie superalgebra associated to the Weisfeiler filtration $\{\L_{(p)}\}_{p\in\Z}$. In the terminology of \cite{Kac98}, $\gL$ is a \emph{$\Z$-graded even transitive irreducible infinite-dimensional} Lie superalgebra.
Moreover, again by \cite{Kac98}, the $\gL_0$-module $\mathfrak L_{-1}$ is \emph{strongly transitive}, i.e., it is a faithful irreducible $\gL_0$-module such that $[\gL_0,X]=\gL_{-1}$ for all non-zero even elements $X\in\gL_{-1}$. 

To conclude the proof of the theorem, one uses the classification \cite{Kac98} of such Lie superalgebras and modules, distinguishing again the two cases above.
\vskip0.4cm\par

\paragraph{\it Case $\L_{(0)}=\bar\s_{(0)}$}\hfill
\vskip0.2cm\par
The $\Z$-graded Lie superalgebra $\gL$ coincides with $\s$. Thus the $\Z$-grading is consistent and the non-positive part of $\gL$ satisfies
\begin{enumerate}
\item[$(1)$]
$\gL_{-p}=0$ for every $p\geq 3$;
\item[$(2)$]
$\dim \gL_{-2}=\dim V\geq 3$;
\item[$(3)$]
$\gL_0$ contains an ideal isomorphic to $\so(V)$ acting on $\gL_{-2}= V$ via the tautological representation.
\end{enumerate}
The even transitive irreducible infinite-dimensional consistently $\Z$-graded Lie superalgebras of depth $d\geq 2$ are listed in \cite[Thm.~5.3]{Kac98} and described in detail in \cite{CK99}.

A case by case verification shows that an $\gL$ satisfying the above properties $(1)-(3)$ does not exist.
\vskip0.4cm\par

\paragraph{\it Case $\L_{(0)}\supsetneq\bar\s_{(0)}$}\hfill
\vskip0.2cm\par
The $\Z$-grading of $\gL$ is not consistent and the non-positive part of $\gL$ satisfies
\begin{enumerate}
\item[$(1)$]
$\gL_{-p}=0$ for every $p\geq 2$;
\item[$(2)$]
$(\gL_{-1})_{\bar0}$ has dimension greater or equal than $3$;
\item[$(3)$]
$(\gL_0)_{\bar0}$ contains an ideal isomorphic to $\so(V)$ acting on $(\gL_{-1})_{\bar0}= V$ via the tautological representation.
\end{enumerate}
The strongly transitive modules with a non-zero even component are listed in \cite[Thm~3.1]{Kac98} (and corrected in \cite{CKadd}).

A case by case verification shows again that an $\gL$ satisfying the above properties $(1)-(3)$ does not exist.
\end{proof}

\section{Classification of simple and maximal prolongations.}
\label{sec2}
In Section \ref{secfin}, we proved that the maximal transitive prolongation $$\g=\bigoplus_{p\geq -2}\g_{p}$$ of a supertranslation algebra $\m=V\oplus W$ with $\dim V\geq 3$ is a finite-dimensional Lie superalgebra (Theorem \ref{findim}) which is semisimple if $\g_{1}\neq 0$ (Theorem \ref{nonloso}). In the latter case, $\g$ contains a unique minimal ideal $\s$ which is a simple prolongation of $\m$ (Theorem \ref{nonloso}). 

This section contains our main classification results: we first classify all possible simple prolongations of $\m$ (Theorem \ref{classimple}) and then derive the maximal transitive prolongations containing each of them (Theorem \ref{teorematotale}). 
\begin{remark}
In the Lie algebra case, if $\m$ is a negatively graded fundamental Lie algebra whose maximal transitive prolongation $\g$ is finite-dimensional, 
any prolongation $\s$ of $\m$ that is simple necessarily coincides with $\g$ \cite{MN}. The corresponding statement is not true in the Lie superalgebra case
and one has to separately consider simple and maximal prolongations.
\end{remark}

Finite-dimensional simple Lie superalgebras $\g=\g_{\bar0}\oplus\g_{\bar1}$ are classified, see \cite{Kac77a} and references therein, and split into two main families:
{\it classical} Lie superalgebras, for which the adjoint action of $\g_{\bar 0}$ on $\g_{\bar1}$ is completely reducible, and
{\it Cartan} Lie superalgebras $W(n)$ (for $n\geq 3$), $S(n)$ (for $n\geq 4$), $\tilde S(n)$ (for $n\geq 4$ and even), $H(n)$ (for $n\geq 5$), that is finite-dimensional Lie superalgebras analogue to simple Lie algebras of vector fields.
\begin{remark}
The simple Lie superalgebras $W(2)$, $S(3)$, $\tilde{S}(2)$ and $H(4)$ are isomorphic to the classical Lie superalgebras $\mathfrak{sl}(1|2)\simeq \mathfrak{osp}(2|2)$, $\mathfrak{spe}(3)$, $\mathfrak{osp}(1|2)$ and $\mathfrak{psl}(2|2)$ respectively. In our conventions, they are not Cartan Lie superalgebras.
\end{remark}
Classical Lie superalgebras in turn split into the so-called {\it basic} Lie superalgebras $\mathfrak{sl}(m+1|n+1)$ (for $m<n$), $\mathfrak{psl}(n+1|n+1)$ (for $n\geq 1$), $\mathfrak{osp}(2m+1|2n)$ (for $n\geq 1$), $\mathfrak{osp}(2|2n-2)$ (for $n\geq 2$),
$\mathfrak{osp}(2m|2n)$ (for $m\geq 2$, $n\geq 1$), $\mathfrak{osp}(4|2;\alpha)$ (for $\alpha\neq 0, \pm 1, \infty$), $\mathfrak{ab}(3)$, $\mathfrak{ag}(2)$,
for which there exists a non-degenerate even invariant supersymmetric bilinear form, and two {\it strange} families $\mathfrak{spe}(n)$ (for $n\geq 3$) and $\mathfrak{psq}(n)$ (for $n\geq 3$).

Note that some authors use different conventions to 
denote some of the above classical Lie superalgebras, cf. \cite{Kac77a, Sche}. 
Our conventions are consistent with those used in, e.g., \cite{Serga, Leites}. 
\vskip0.3cm\par
We first prove that the Cartan and strange Lie superalgebras do not appear in our classification and then deal with basic Lie superalgebras.

\subsection{Cartan type and strange Lie superalgebras}\hfill
\vskip0.3cm\par
We briefly recall the description of Cartan Lie superalgebras and their possible $\Z$-gradings \cite{Kac77b}.

Let 
$$W(n)=\{\sum_{\alpha=1}^{n}P^{\alpha}(\xi^1,\dots,\xi^n)\frac{\partial}{\partial \xi^\alpha}\mid P^{\alpha}\in\Lambda((\mathbb C^{n})^{*}),\,1\leq \alpha\leq n\}$$ 
be the algebra of derivations of the Grassmann algebra 
$\Lambda((\mathbb C^{n})^{*})$ generated by $n$ elements $\xi^1,\dots,\xi^n\in(\mathbb C^{n})^{*}$.
Given an $n$-tuple of integers $(k_1,\dots,k_n)$, the $\Z$-grading of $W(n)$ of type $(k_1,\dots,k_n)$ is defined by assigning degrees 
$$
\deg(\xi^\alpha)=k_\alpha\,,\;\;\deg(\frac{\partial}{\partial \xi^\alpha})=-k_\alpha\ .
$$
Up to isomorphism, every $\Z$-grading of $W(n)$ is of this form. 
Note that a grading of type $(k_1,\dots,k_n)$ is consistent precisely if all $k_\alpha$'s are odd.

The grading of type $(1,\dots,1)$ is usually called the {\it principal grading} and, in this case, the subalgebra 
$$W(n)_0=\langle \xi^\alpha\frac{\partial}{\partial \xi^\beta}\mid 1\leq \alpha,\beta\leq n \rangle$$ 
of $0$-degree
elements is identifiable with $\mathfrak{gl}(n)$. The Lie subalgebra 

\begin{equation}
\label{slmc}
\mathfrak{sl}(n)=\langle \xi^\alpha\frac{\partial}{\partial \xi^\beta}, \xi^\alpha\frac{\partial}{\partial \xi^\alpha}-\xi^\beta\frac{\partial}{\partial \xi^\beta} \mid 1\leq \alpha\neq \beta\leq n \rangle
\end{equation}
is a Levi factor of the even part $W(n)_{\bar{0}}$; it is $\Z$-graded in every grading of type $(k_1,\dots,k_n)$.

We denote by $S(n)$ the subalgebra of divergence free derivations of $\Lambda((\mathbb C^{n})^{*})$, where $\operatorname{div}(\sum_{\alpha} P^{\alpha}\frac{\partial}{\partial \xi^\alpha})=\sum_{\alpha}(-1)^{p(P^{\alpha})}\frac{\partial P^{\alpha}}{\partial \xi^{\alpha}}$, and by $$\tilde{S}(n)=(1+\xi^1\cdots\xi^n)S(n)$$ the unique non-trivial simple deformation of $S(n)$ for $n$ even (see \cite{Kac77a} for more details). 

Finally, the Hamiltonian Lie superalgebra $H(n)$ is the derived ideal of the superalgebra preserving the symplectic form
$\omega_n=\sum_{i=1}^{n}d\xi^{i}d\xi^{n+1-i}$, i.e. 
$$H(n)=\bigg\langle\sum_{\alpha=1}^{n}\frac{\partial f}{\partial \xi^\alpha} \frac{\partial}{\partial \xi^{n+1-\alpha}}+\frac{\partial f}{\partial \xi^{n+1-\alpha}}
\frac{\partial}{\partial \xi^\alpha}\mid f\in\Lambda^{k}((\mathbb C^{n})^{*})\,,1\leq k\leq n-1 \bigg\rangle\, .$$

Up to isomorphism, all $\Z$-gradings of the simple Lie superalgebras $S(n)$, $\tilde{S}(n)$ and $H(n)$ are induced by $\Z$-gradings of $W(n)$.
More precisely, they are all obtained as follows (see \cite{Kac77g}, where there is a misprint in the $H(n)$-case):
\begin{itemize}
\item[i)] every grading of type $(k_1,\dots,k_n)$ induces a $\Z$-grading of $S(n)$,
\item[ii)] every grading of type $(k_1,\dots,k_n)$ with $\sum_{i=1}^{n} k_i=0$ induces a $\Z$-grading of $\tilde{S}(n)$,
\item[iii)] every grading of type $(k_1,\dots,k_n)$ with $k_i+k_{n+1-i}=k_j+k_{n+1-j}$ for all $1\leq i,j\leq n$ induces a $\Z$-grading of $H(n)$.
\end{itemize}
A $\Z$-grading of $S(n)$, $\tilde{S}(n)$ or $H(n)$ is consistent if and only if it is induced by a consistent $\Z$-grading of $W(n)$.

The subalgebra \eqref{slmc} is also a Levi factor of $S(n)_{\bar{0}}$ and $\tilde{S}(n)_{\bar{0}}$.
On the other hand, the intersection $\mathfrak{sl}(n)\cap H(n)$ gives the Levi factor 
\begin{equation}
\label{somc}
\so(n)=\langle \xi^\alpha\frac{\partial}{\partial \xi^{n+1-\beta}}-\xi^{\beta}\frac{\partial}{\partial \xi^{n+1-\alpha}}\mid 1\leq \alpha<\beta\leq n \rangle
\end{equation}
of $H(n)_{\bar{0}}$ and it is $\Z$-graded in every grading of $H(n)$ of type $(k_1,\dots,k_n)$.

\begin{proposition}
Let $\m=V\oplus W$ be a supertranslation algebra satisfying $\dim V\geq 3$ and $\s=\bigoplus_{p\geq -2}\s_{p}$ a simple prolongation of $\m$. If
$\s_0$ contains an ideal isomorphic to $\so(V)$, acting via the tautological representation on $\m_{-2}= V$ and a multiple of the spinor representation on $\m_{-1}= W$, then $\s$ is not a Lie superalgebra of Cartan type.
\end{proposition}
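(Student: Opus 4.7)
The plan is to argue by contradiction. Assume $\s$ is of Cartan type, so $\s\in\{W(n),S(n),\tilde S(n),H(n)\}$ for some $n$, endowed with a $\Z$-grading making it a simple prolongation of $\m$. Since the grading of $\m$ is consistent and $\s$ extends $\m$ in non-negative degrees, the induced $\Z$-grading on $\s$ is consistent. By the classification recalled in (i)--(iii) above, this grading is induced by a grading of type $(k_1,\dots,k_n)$ on the ambient $W(n)$ with every $k_\alpha$ odd.

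The next step is to turn the depth-$2$ condition $\s_{-p}=0$ for $p\geq 3$ into explicit numerical constraints on the $k_\alpha$'s. The derivation $\partial/\partial\xi^\alpha\in \s_{-k_\alpha}$ immediately forces $k_\alpha\leq 1$ for each $\alpha$, and an analogous analysis of the higher monomial derivations $\xi^{\beta_1}\cdots\xi^{\beta_j}\partial/\partial\xi^\alpha$ (using depth $2$ to discard terms of degree $\leq -3$) rules out large negative $k_\alpha$'s, with the few exceptional configurations that survive being disposed of by a direct dimension count against $\dim V\geq 3$ and the divisibility constraint $\dim \mathbb S\mid \dim W$. After this reduction one is left with $k_\alpha\in\{-1,1\}$ for every $\alpha$. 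Setting $p=|\{\alpha:k_\alpha=1\}|$ and $q=|\{\alpha:k_\alpha=-1\}|$, one can then describe $\s_{-2}$, $\s_{-1}$ and the Levi subalgebra of $\s_0$ explicitly: in the types $W$, $S$, $\tilde S$ the Levi subalgebra is $\mathfrak{sl}(p)\oplus\mathfrak{sl}(q)$ acting on $\s_{-2}\cong(\mathbb C^p)^{*}\otimes\mathbb C^q$ by the external tensor product of standard and costandard representations, while the symplectic constraint $k_i+k_{n+1-i}=\mathrm{const.}$ for $H(n)$ forces $n=2p$ and reduces the Levi subalgebra to a single $\mathfrak{sl}(p)$.

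To conclude, I would exploit the hypothesis that $\so(V)$ is an ideal of $\s_0$ acting tautologically on $V=\s_{-2}$. Since $\so(V)$ is almost simple, it must be identified with a simple factor (or, when $D=4$, with a sum of two $\mathfrak{sl}(2)$-factors) of the Levi subalgebra computed above. This pins down $(p,q)$ and forces one of the exceptional low-dimensional coincidences $\so(3)\cong\mathfrak{sl}(2)$, $\so(4)\cong\mathfrak{sl}(2)\oplus\mathfrak{sl}(2)$, $\so(6)\cong\mathfrak{sl}(4)$ (and analogous identifications in the $H(n)$-case). For each of these finitely many surviving configurations, I would compute $\s_{-1}$ as an $\so(V)$-module using the explicit description from the previous step, and verify that it is never a multiple of the spinor representation of $\so(V)$ of the required dimension. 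The main obstacle is this last verification: although each single case is elementary, the reduction to a finite list and the module-theoretic comparison of the tensor-type pieces inside $\s_{-1}$ with the spinor pieces of $W$ must be performed carefully to close off all possibilities.
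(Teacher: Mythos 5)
Your overall strategy (restrict to consistent gradings of type $(k_1,\dots,k_n)$ with all $k_\alpha$ odd, use depth $2$ to force $k_\alpha\in\{-1,1\}$, then compare $\s_{-1}$ with a multiple of $\mathbb S$) is viable, but the middle step contains a genuine error that breaks the case analysis. The formulas $\s_{-2}\cong(\C^p)^{*}\otimes\C^q$ and ``Levi subalgebra of $\s_0$ equal to $\mathfrak{sl}(p)\oplus\mathfrak{sl}(q)$'' only account for the monomial derivations $\partial/\partial\xi^\alpha$ and $\xi^\beta\partial/\partial\xi^\alpha$; as soon as several indices carry $k_\alpha=-1$, the cubic and higher monomials $\xi^{\beta_1}\cdots\xi^{\beta_j}\partial/\partial\xi^\alpha$ also land in degrees $0,-1,-2$ (and in degrees $\leq-3$ when $p\geq1$ and $q\geq2$, which is really how depth $2$ cuts down the list). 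The configuration you lose is precisely the type $(-1,\dots,-1)$, for which your formula gives $\s_{-2}=(\C^0)^{*}\otimes\C^n=0$, so you would discard it against $\dim V\geq3$ --- but for $H(5)$ this grading has depth $5-3=2$, is consistent and fundamental, has $\s_{-2}\cong\Lambda^4((\C^5)^{*})\cong\C^5$ with the ideal $\so(5)\subset\s_0$ acting tautologically, and can only be excluded by the spinor hypothesis, since $\s_{-1}\cong\Lambda^3(V)$ is not a multiple of $\mathbb S$. For the same reason your claim that the constraint $k_i+k_{n+1-i}=\mathrm{const.}$ forces $n=2p$ is false: the constant may be $-2$, which allows odd $n$. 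So, as written, the proof dismisses a genuinely surviving case for a wrong reason and never performs the one module-theoretic check that the statement actually requires.

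For comparison, the paper avoids enumerating grading types at the outset: it takes the $\Z$-graded Levi decomposition $\s_{\bar0}=\l\oplus\r$ with $\l=\mathfrak{sl}(n)$ (types $W$, $S$, $\tilde S$) or $\l=\so(n)$ (type $H$), observes that depth $2$ and consistency leave only $\l=\l_0$ or $\l=\l_{-2}\oplus\l_0\oplus\l_2$, and in the first case the ideal hypothesis forces $\l=\so(V)$ and the grading to be of type $(-1,\dots,-1)$ (whence the $H(5)$ check above), while in the second case $\l\cong\so(D+2)$ and one verifies directly that no permissible grading of the required depth exists. If you wish to keep the enumeration-by-type approach, you must compute the graded pieces from all monomials, not only the linear ones, before identifying $\so(V)$ inside $\s_0$ and testing $\s_{-1}$ against the spinor representation.
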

\begin{proof}
Let $\s=W(n)$ (for $n\geq3$), $S(n)$ (for $n\geq4$), $\tilde{S}(n)$ (for $n\geq4$ and even) or $H(n)$ (for $n\geq 5$). The $\Z$-grading of $\s$ is induced by a grading of $W(n)$ of type $(k_1,\dots,k_n)$ and
there exists a $\Z$-graded Levi decomposition $$\s_{\bar{0}}=\l\oplus\r$$ of the even part of $\s$, where $\r$ denotes the radical of $\s_{\bar 0}$ and the simple Levi factor $\l$ is the one in \eqref{slmc} or \eqref{somc}. 

Since the $\Z$-grading of $\s$ has depth $2$, there are two possibilities for the $\Z$-grading of $\l$: either $\l=\l_0$ or $\l=\l_{-2}\oplus\l_0\oplus\l_2$ with $\l_{-2}\neq 0$. One has to treat the two cases separately.
\vskip0.4cm\par

\paragraph{\it Case $\l=\l_0$}\hfill
\vskip0.3cm\par
The subalgebra $\l$ is also a Levi factor of $\s_0$. Being simple, it coincides with $\so(V)$. This implies that $\s=W(4)$, $S(4)$, $\tilde{S}(4)$ and $\dim V=6$, or $\s=H(n)$ and $\dim V=n\geq 5$. The condition $\l\subset\s_0$ also implies that the $n$-tuple $(k_1,\dots,k_n)$ is an integer multiple of $(1,\dots,1)$. The $\Z$-grading of $\s$ is fundamental, forcing $(k_1,\dots,k_n)=\pm(1,\dots,1)$, and of depth $2$, leaving only the possibility 
$(k_1,\dots,k_n)=(-1,\dots,-1)$.  

This $\Z$-grading has depth $3$ for $W(4)$. For $S(4)$, it has depth $2$ but $\dim\s_{-2}=10\neq6$. For $\tilde{S}(4)$, the $\Z$-grading is not permissible. For $H(n)$, it has depth $n-3$ and, in the special case $H(5)$, the $\so(V)$-module $\s_{-1}$ is isomorphic to $\Lambda^3(V)$, which is not a multiple of the spinor module.
\vskip0.4cm\par

\paragraph{\it Case $\l=\l_{-2}\oplus\l_0\oplus\l_2$ with $\l_{-2}\neq 0$}\hfill\vskip0.3cm\par 
If $\l_{-2}\subsetneq V$, then $\r_{-2}\neq 0$ and this would imply $\r_{-2}=V$, a contradiction. Hence $\l_0$ acts irreducibly and conformally on $\l_{-2}=V$. Since  $\l_{2}\simeq V^*$,
one can see that $\l_0\simeq \mathfrak{co}(V)$ and $\l$ is isomorphic to $\so(D+2)$, where $D=\dim V$. 

This happens precisely when $\l=\mathfrak{sl}(4)\simeq\so(6)$ and $\s=W(4)$, $S(4)$, $\tilde{S}(4)$ or when $\l=\so(n)$ and $\s=H(n)$.

In the former case, the conditions that $\s$ has depth $2$ and $\dim V=4$ rule out all permissible $\Z$-gradings of $\s=W(4)$, $S(4)$, $\tilde{S}(4)$.

In the latter case, it is straightforward to check that $H(n)$ with $n\geq 5$ does not admit any consistent $\Z$-grading of depth $2$ with $H(n)_{2}\neq 0$.
\end{proof}

Having dealt with Lie superalgebras of Cartan type, we turn now to the two families of strange Lie superalgebras.

\begin{proposition}
Let $\m=V\oplus W$ be a supertranslation algebra satisfying $\dim V\geq 3$ and $\s=\bigoplus_{p\geq -2}\s_{p}$ a simple prolongation of $\m$. If
$\s_0$ contains an ideal isomorphic to $\so(V)$, acting via the tautological representation on $\m_{-2}= V$ and a multiple of the spinor representation on $\m_{-1}= W$, then $\s$ is not a strange Lie superalgebra
$\mathfrak{spe}(n)$ or $\mathfrak{psq}(n)$ (for $n\geq3$).
\end{proposition}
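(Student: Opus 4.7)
The strategy is to show that neither $\mathfrak{spe}(n)$ nor $\mathfrak{psq}(n)$, for $n\geq 3$, admits a consistent $\Z$-grading of depth $2$ with $\s_{-2}\neq 0$; this already yields the statement, independently of the refined $\so(V)$-hypothesis. In both cases $\s_{\bar 0}\simeq\mathfrak{sl}(n)$, and I would treat the two families separately according to the $\s_{\bar 0}$-module structure of the odd part.

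For $\s=\mathfrak{psq}(n)$ the odd part $\s_{\bar 1}\simeq\mathfrak{sl}(n)$ is the adjoint module, which is irreducible for $n\geq 3$. I would first verify that every even derivation of $\s$ is inner: a derivation $d$ with $d|_{\s_{\bar 0}}=0$ commutes with the $\s_{\bar 0}$-action on $\s_{\bar 1}$, so by Schur's lemma $d|_{\s_{\bar 1}}=c\cdot\mathrm{id}$, and applying $d$ to the non-trivial supersymmetric anticommutator $\s_{\bar 1}\otimes\s_{\bar 1}\to\s_{\bar 0}$ forces $c=0$; since $\mathfrak{sl}(n)$ has no outer derivations, the claim follows. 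Any $\Z$-grading of $\s$ is therefore induced by some inner $\ad_H$ with $H\in\s_{\bar 0}$, which produces identical eigenspace decompositions of $\mathfrak{sl}(n)$ on both $\s_{\bar 0}$ and $\s_{\bar 1}$. Consistency (even degrees on $\s_{\bar 0}$, odd degrees on $\s_{\bar 1}$) then forces every eigenvalue to vanish, so the grading is trivial and $\s_{-2}=0$, a contradiction.

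For $\s=\mathfrak{spe}(n)$, write $\s_{\bar 1}=\mathfrak{p}^+\oplus\mathfrak{p}^-$ with $\mathfrak{p}^+\simeq S^2(\C^n)$ and $\mathfrak{p}^-\simeq\Lambda^2((\C^n)^*)$. I would exploit the existence of an outer even derivation $\delta$ giving the natural grading (zero on $\s_{\bar 0}$, $\pm 1$ on $\mathfrak{p}^\pm$), so that every $\Z$-grading is induced by $H=a\delta+H_1$ for some $a\in\Z$ and some diagonal $H_1\in\mathfrak{sl}(n)$ with integer weights $(h_1,\dots,h_n)$ of zero sum. Consistency forces $a$ odd and $h_i-h_j$ even, while depth $2$ on $\s_{\bar 0}$ with $\s_{-2}\neq 0$ forces the $h_i$ to take exactly two values differing by $2$ (the integrality and zero-sum conditions additionally force $n$ even with equal multiplicities). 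The three sums $h_i+h_j$ then realise $\{-2,0,2\}$, so $\mathfrak{p}^-$ acquires non-zero components at the three odd integers $\{-a-2,-a,-a+2\}$; for any odd $a$, at least one of these is $\leq-3$, contradicting the depth-$2$ hypothesis.

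The main technical obstacle is the $\mathfrak{spe}(n)$ case: one must parameterise all consistent $\Z$-gradings explicitly in terms of $\delta$ and $H_1$, verify that the three distinct values of $h_i+h_j$ are actually realised (ruling out degenerate partitions $k\in\{0,1,n-1,n\}$), and handle the integrality and zero-sum constraints that force $n$ even. The $\mathfrak{psq}(n)$ case is conceptually shorter, relying on the preliminary triviality of outer even derivations and on the coincidence of the $\s_{\bar 0}$- and $\s_{\bar 1}$-eigendecompositions.
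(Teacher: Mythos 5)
Your treatment of $\mathfrak{psq}(n)$ is correct and is actually a genuinely different (and more self-contained) route than the paper, which simply asserts that $\mathfrak{psq}(n)$ admits no consistent $\Z$-grading: your Schur-type argument that all even derivations are inner, combined with the coincidence of the $\ad_H$-eigenvalue multisets on $\s_{\bar 0}$ and $\s_{\bar 1}$, does supply a proof of that assertion.

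The $\mathfrak{spe}(n)$ half, however, has a genuine gap: the stronger statement you aim for --- that $\mathfrak{spe}(n)$ has no consistent depth-$2$ grading with $\s_{-2}\neq 0$, independently of the $\so(V)$-hypothesis --- is false. Take $n=4$ and the \emph{inner} grading element $\ad_{H_1}$ with $H_1=\mathrm{diag}(\tfrac32,-\tfrac12,-\tfrac12,-\tfrac12)\in\mathfrak{sl}(4)$ (and $a=0$): the degrees on $\s_{\bar0}=\mathfrak{sl}(4)$ are $h_i-h_j\in\{0,\pm2\}$, on $S^2(\C^4)$ they are $h_i+h_j\in\{3,1,-1\}$, and on $\Lambda^2((\C^4)^*)$ they are $-(h_i+h_j)\in\{\pm1\}$, giving a consistent $\Z$-grading of depth $2$ with $\dim\s_{-2}=3$ and $\s_0\simeq\mathfrak{gl}(3)$. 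The error in your parametrisation is the assumption that $a$ and the weights $h_i$ are integers: only the \emph{eigenvalues} $h_i-h_j$ and $\pm(a+h_i+h_j)$ must be integers, and it is precisely the ``degenerate partition'' $k=1$ (which you propose to rule out) that is realised by such non-integral data and destroys the conclusion that one of $\mathfrak{p}^{\pm}$ reaches degree $\leq-3$ (when $k=1$ the extreme value $2v_1$ of $h_i+h_j$ does not occur in $\Lambda^2$). There is also a smaller sign slip: even with $k=n-k$, for $a<0$ it is $\mathfrak{p}^+$, not $\mathfrak{p}^-$, that acquires a component in degree $\leq-3$. To close the gap you must use the hypothesis you set aside: the ideal $\so(V)\subset\s_0$ acting tautologically on $\s_{-2}=V$ forces $\s_{\bar0}=\mathfrak{sl}(n)$ to be $\Z$-graded as $V\oplus\mathfrak{co}(V)\oplus V^*$, hence $n=4$, $\dim V=4$ and $H_1\sim\mathrm{diag}(1,1,-1,-1)$ with $a$ odd; then both $\mathfrak{p}^+$ and $\mathfrak{p}^-$ realise all three degrees $\pm a+\{-2,0,2\}$ and one of them lies below $-2$. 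This is in substance what the paper does, except that it delegates the last step to Kac's classification of consistent $\Z$-gradings of $\mathfrak{spe}(4)$ with $\mathfrak{spe}(4)_0\simeq\mathfrak{co}(4)$.
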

\begin{proof}
Assume $\s=\mathfrak{spe}(n)=\mathfrak{spe}(n)_{\bar0}\oplus \mathfrak{spe}(n)_{\bar1}$, where 
$$
\mathfrak{spe}(n)_{\bar0}\simeq\mathfrak{sl}(n)\,\,,\,\,\mathfrak{spe}(n)_{\bar1}\simeq \Pi(S^2(\C^n)\oplus\Lambda^2((\C^n)^*))\ .
$$ The even part is $\Z$-graded $\s_{\bar0}=\s_{-2}\oplus\s_0\oplus\s_2$ and $\s_{0}$ contains $\so(V)$ as an ideal acting via the tautological representation on $\s_{-2}=V$. The only possibility is that $n=4$ and $\mathfrak{sl}(4)\simeq\so(6)$ is the Cartan prolongation of $(\C^4,\mathfrak{co}(4))$. 

However the classification in \cite{Kac77g} implies that all consistent $\Z$-gradings of $\mathfrak{spe}(4)$ with $\mathfrak{spe}(4)_0\simeq \mathfrak{co}(4)$ have depth at least $3$.

Finally, $\mathfrak{psq}(n)$ does not admit any consistent $\Z$-grading.
\end{proof}
\vskip0.3cm\par
\subsection{Basic Lie superalgebras}\hfill\label{ebbeneebbene}
\vskip0.3cm\par

We briefly recall some notions about basic Lie superalgebras, their Dynkin diagrams and their $\mathbb{Z}$-gradings.

A simple Lie superalgebra $\s=\s_{\overline{0}}\oplus\s_{\overline{1}}$ is called {\it basic} if the even part $\s_{\overline{0}}$ is a reductive Lie algebra and there exists an even non-degenerate
invariant supersymmetric bilinear form $B:\s\otimes\s\rightarrow\C$. 

There are four families $\mathfrak{sl}(m+1|n+1)$ (for $m<n$) and $\mathfrak{psl}(n+1|n+1)$ (for $n\geq 1$), $\mathfrak{osp}(2m+1|2n)$ (for $n\geq 1$), $\mathfrak{osp}(2|2n-2)$ (for $n\geq 2$), $\mathfrak{osp}(2m|2n)$ (for $m\geq 2$, $n\geq 1$), a family $\mathfrak{osp}(4|2;\alpha)$ (for $\alpha\neq 0,\pm 1,\infty$) of deformations of $\mathfrak{osp}(4|2)$
and the exceptional cases $\mathfrak{ab}(3)$ and $\mathfrak{ag}(2)$; the list can be found in \cite{Kac77a}.

The form $B$ is unique up to constant and it coincides with the Killing form of $\s$, except for the cases $\mathfrak{psl}(n+1|n+1)$, $\mathfrak{osp}(2m+2|2m)$ and $\mathfrak{osp}(4|2;\alpha)$.

For later use, we give in Table \ref{Kactab} the description of the even Lie subalgebra $\s_{\overline{0}}$ of $\s$ and its representation on the odd part $\s_{\overline{1}}$. 
\vskip0.2cm
{\small 
\begin{table}[H]
\begin{centering}
\begin{tabular}{|c|c|c|}
\hline
$\s$ & $\s_{\overline{0}}$ & $\s_{\overline{1}}$\\
\hline
\hline
$\begin{gathered}\mathfrak{sl}(m+1|n+1)^{\phantom{T}}\\ m<n\end{gathered}$ & $\mathfrak{sl}(m+1)\oplus\mathfrak{sl}(n+1)\oplus\C Z$ & $\C^{m+1}\otimes(\C^{n+1})^{*}\oplus(\C^{m+1})^{*}\otimes\C^{n+1}$ \\
\hline
$\begin{gathered}\mathfrak{psl}(n+1|n+1)^{\phantom{T}}\\ n\geq 1 \end{gathered}$ & $\mathfrak{sl}(n+1)\oplus\mathfrak{sl}(n+1)$ & $\C^{n+1}\otimes(\C^{n+1})^{*}\oplus(\C^{n+1})^{*}\otimes\C^{n+1}$\\
\hline
$\begin{gathered}\mathfrak{osp}(2m+1|2n)^{\phantom{T}}\\ n\geq 1 \end{gathered}$ & $\mathfrak{so}(2m+1)\oplus\mathfrak{sp}(2n)$ & $\C^{2m+1}\otimes\C^{2n}$\\
\hline
$\begin{gathered}\mathfrak{osp}(2|2n-2)^{\phantom{T}}\\ n\geq 2 \end{gathered}$ & $\mathfrak{so}(2)\oplus\mathfrak{sp}(2n-2)$ & $\C^{2}\otimes\C^{2n-2}$\\
\hline
$\begin{gathered}\mathfrak{osp}(2m|2n)^{\phantom{T}}\\ m\geq 2, n\geq 1 \end{gathered}$ & $\mathfrak{so}(2m)\oplus\mathfrak{sp}(2n)$ & $\C^{2m}\otimes\C^{2n}$\\
\hline
$\begin{gathered}\mathfrak{osp}(4|2;\alpha)^{\phantom{T}}\\ \alpha\neq 0,\pm 1,\infty \end{gathered}$ & $\mathfrak{sl}(2)\oplus\mathfrak{sl}(2)\oplus\mathfrak{sl}(2)$ & $\C^{2}\otimes\C^{2}\otimes\C^2$\\
\hline
$\mathfrak{ab}(3)^{\phantom{T}}$ & $\mathfrak{so}(7)\oplus\mathfrak{sl}(2)$ & $\mathbb{S}\otimes\C^{2^{\phantom{T}}}$ \\
\hline
$\mathfrak{ag}(2)^{\phantom{T}}$ & $\mathrm{G}_2\oplus \mathfrak{sl}(2)$ & $\mathbb{C}^{7^{\phantom{T}}}\otimes\C^2$\\
\hline 
\end{tabular}
\end{centering}
\caption[]{\label{Kactab}}   
\vskip14pt
\end{table}}
\subsubsection*{Dynkin diagrams}\hfill\vskip0.2cm\par
Basic Lie superalgebras can be described by means of Cartan matrices, more precisely they are the quotients of indecomposable finite-dimensional {\it contragredient} Lie superalgebras by their center \cite{Kac77a, Kac77b}.
In all cases, the center is trivial with the exception of $\mathfrak{psl}(n+1|n+1)$, where the contragredient Lie superalgebra $\mathfrak{sl}(n+1|n+1)$ has a one-dimensional center. 

It is convenient to describe integer and sparse Cartan matrices by Dynkin diagrams. They were first introduced for Lie superalgebras in \cite{Kac77a, Serga}, however we will use the slightly different conventions given by \cite{Leites, BGL}. We recall here only the facts that we need and refer to those texts for more details.

Let $\g$ be an indecomposable finite-dimensional contragredient Lie superalgebra, $\t$ a Cartan subalgebra of $\g_{\bar0}$ and $\Delta=\Delta(\g,\t)$ the associated root system. Then $\g_{\bar0}$ and $\g_{\bar1}$ decompose into the direct sum of root spaces $\g^\alpha$ and a root $\alpha$ is called \emph{even} (resp. \emph{odd}) if $\g_{\bar0}^\alpha$ (resp. $\g_{\bar1}^\alpha$) is non-zero. Every root is either even or odd and the root spaces are one-dimensional except in the case $\g=\mathfrak{sl}(2|2)$, where all four odd roots have two-dimensional eigenspaces. 

Many properties of root systems of Lie algebras remain true for basic Lie superalgebras, see \cite[Proposition 5.3]{Kac77b}. In particular, any decomposition $\Delta=\Delta^+\cup-\Delta^+$ into positive and negative roots determines a system $$\Sigma=\{\alpha_1,\ldots,\alpha_r\}$$ of simple positive roots. Every positive root $\alpha\in\Delta^+$ can be written in a canonical way as a sum 
$$\alpha=\sum_{i=1}^{r}b_i\alpha_i,$$ with non-negative integer coefficients $b_i$.

The Weyl group of $\g_{\bar0}$ acts on the set of simple root systems. In contrast with the Lie algebra case this action is not transitive, and hence different simple root systems of the same 
basic Lie superalgebra may not be conjugated. 
To each orbit of the Weyl group one can associate a Dynkin diagram as follows.
\vskip0.2cm\par
Consider a Cartan matrix $(a_{ij})$ associated to $\Sigma$, see \cite{Leites, BGL}. Each simple root $\alpha_i$ corresponds to a node which is colored \emph{white} if $\alpha$ is even (in this case $a_{ii}=2)$, \emph{gray} if $\alpha$ is odd and $B$-isotropic (in this case $a_{ii}=0)$, or \emph{black} if $\alpha$ is odd and non-isotropic (in this case $a_{ii}=1$).

The $i$-th and $j$-th nodes of the diagram are not joined if $a_{ij}=a_{ji}=0$, otherwise they are joined by 
$\operatorname{max}(|a_{ij}|,|a_{ji}|)$-edges with an arrow pointing from $\alpha_i$ to $\alpha_j$ if $|a_{ij}|<|a_{ji}|$.




Finally, we mark the $i$-th node with the coefficient $b_{i,\text{max}}$ of the expression of the highest root 
$$\alpha_{\text{max}}=\sum_{i=1}^{r}b_{i,\text{max}}\alpha_i$$ as sum of simple roots.
 
A list of all possible Dynkin diagrams associated to basic Lie superalgebras is contained in \cite[Tables 1-5]{Leites}. For more details on how to recover the Cartan matrix from the Dynkin diagram, we refer the reader to \cite{Leites}.
\vskip0.2cm\par
\subsubsection*{Fundamental consistent $\Z$-gradings}\hfill
\vskip0.2cm\par
Let $\s$ be a finite-dimensional contragredient Lie superalgebra different from $\mathfrak{sl}(2|2)$, with a Cartan subalgebra $\t$, a root system $\Delta$ and a fixed simple root system $\Sigma$.

Let $\deg\alpha_i=0$ if $\alpha_i\in\Sigma$ is even and $\deg\alpha_i=1$ if $\alpha_i\in\Sigma$ is odd, and extend the definition to all roots by
\[\deg (\sum_{i=1}^{r} b_i\alpha_i)=\sum_{i=1}^{r} b_i\deg(\alpha_i). \]
The $\Z$-grading of $\g$ given by
$$
\g_0=\t\oplus\bigoplus_{\substack{\alpha\in\Delta\\\deg\alpha=0}}\g^\alpha\;,\qquad
\g_p=\bigoplus_{\substack{\alpha\in\Delta\\\deg\alpha=p}}\g^\alpha\;,
$$
is consistent and fundamental.

By \cite{Kac77g}, all possible consistent fundamental $\Z$-gradings of $\s$ are equivalent to one of this form, for some choice of $\Sigma$. In particular: 
\vskip0.3cm\par\noindent
{\it Every Dynkin diagram canonically describes a unique consistent and fundamental $\Z$-grading.}
\vskip0.3cm\par
The depth of $\s$ is equal to the degree of the highest root 
$$\deg(\alpha_{\text{max}})=\sum_{i=1}^{r}b_{i,\text{max}}\deg(\alpha_i)\ .$$
The subalgebra $\s_0$ is a reductive Lie algebra, the Dynkin diagram of its semisimple ideal is obtained from the Dynkin diagram of $\s$ by removing all gray and black nodes, and any line issuing from them.
\vskip0.2cm\par

\subsubsection*{Main classification result}\hfill\vskip0.2cm\par
We can now state and prove the following theorem.
\begin{theorem}
\label{classimple}
Let $\s$ be a basic Lie superalgebra satisfying the following properties:
\begin{enumerate}
\item $\s=\bigoplus_{p\geq -2}\s_{p}$ is a consistently $\Z$-graded Lie superalgebra of depth $2$ with $\dim\s_{-2}\geq 3$,
\item the $\Z$-grading is fundamental and transitive,
\item there exist an identification between $\s_{-2}$ and a vector space $V$ endowed with a non-degenerate symmetric bilinear form, and an ideal of $\s_{0}$
whose action on $\s_{-2}$ is equivalent to the tautological representation of $\so(V)$ on $V$,
\item the adjoint action of $\so(V)\subset \s_{0}$ on $\s_{-1}$ is equivalent to a direct sum of spinor or semi-spinor representations.
\end{enumerate}
Then $\s$ is isomorphic to one of the Lie superalgebras listed in Table \ref{primatab}, with the consistent $\Z$-grading determined by the corresponding Dynkin diagram. 
Therein, the symbol "$\cdots$" denotes a subdiagram corresponding to a Lie algebra $\mathfrak{sl}(\ell+1)$ of appropriate (possibly zero) rank $\ell$, whereas the symbol $E$ in the last column of the table is the grading element of $\mathfrak{s}$.
\end{theorem}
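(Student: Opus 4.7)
The plan is to enumerate all consistent fundamental $\mathbb Z$-gradings of depth $2$ of the basic Lie superalgebras listed in Subsection \ref{ebbeneebbene}, and then impose the representation-theoretic hypotheses (3) and (4) to prune the resulting list. The guiding principle is that, by the Dynkin-diagram description of consistent fundamental gradings recalled above, the depth of $\s$ equals $\sum_{i} b_{i,\max}\deg(\alpha_i)$, where even simple roots contribute $0$ and odd simple roots contribute $1$. Thus depth $2$ is realized in exactly two ways: either (i) a single odd simple root $\alpha_i$ with mark $b_{i,\max}=2$, all other odd simple roots being absent; or (ii) exactly two odd simple roots each with mark $1$. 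In either case the subspaces $\s_{-2}$ and $\s_{-1}$ are read off as direct sums of root spaces of the corresponding degree, while $\s_0$ is determined by the Dynkin subdiagram on the even nodes together with a central torus.

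The proof then reduces to a finite inspection. For each of the families $\mathfrak{sl}(m+1|n+1)$, $\mathfrak{psl}(n+1|n+1)$, $\mathfrak{osp}(2m+1|2n)$, $\mathfrak{osp}(2|2n-2)$, $\mathfrak{osp}(2m|2n)$, $\mathfrak{osp}(4|2;\alpha)$, $\mathfrak{ab}(3)$, $\mathfrak{ag}(2)$, I would iterate over the inequivalent Dynkin diagrams (as listed in \cite{Leites}), select the subsets of odd nodes compatible with (i) or (ii), and for each such choice verify four points. First, that $\s_0$ contains an ideal isomorphic to $\so(V)$ with $V = \s_{-2}$ acting via the tautological representation, which forces $\dim V\geq 3$ and eliminates low-rank cases. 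Second, that the $\s_0$-module $\s_{-1}$, obtained by restricting the $\s_{\overline 0}$-structure of Table \ref{Kactab}, decomposes as an $\so(V)$-module into a sum of spinor or semi-spinor representations. Third, transitivity, which can be checked directly from the action of a generator of the centre of $\s_0$ coming from the marked node(s). Fourth, fundamentality, which is automatic for gradings obtained from Dynkin diagrams.

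The main obstacle is the combinatorial exhaustion combined with recognizing when the tautological and spinor $\so(V)$-modules appear simultaneously. For the orthosymplectic series an orthogonal summand is already visible in $\s_{\overline 0}$, so the inspection is fairly direct, matching the $\mathfrak{osp}$ entries of Table \ref{primatab}. The subtler cases are $\mathfrak{sl}(m+1|n+1)$ and $\mathfrak{psl}(n+1|n+1)$, where $\so(V)$ must be recognized inside a simple factor $\mathfrak{sl}(k)$ of $\s_0$ via the exceptional isomorphism $\mathfrak{sl}(4)\simeq\so(6)$; this immediately forces $\dim V\in\{4,6\}$ and restricts the admissible family members, which is the delicate bookkeeping step. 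The exceptional cases $\mathfrak{ag}(2)$ and $\mathfrak{osp}(4|2;\alpha)$ are ruled out after inspecting their small number of Dynkin diagrams, while $\mathfrak{ab}(3)$ survives in a single grading producing $\dim V=5$ and $\s_{-1}\simeq\mathbb S\otimes\C^2$. Matching all the surviving candidates against Table \ref{primatab} then completes the classification.
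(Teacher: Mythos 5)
Your strategy is sound and would arrive at the same list, but it is organized differently from the paper's proof, and the difference matters for feasibility. You propose to enumerate first: list all consistent fundamental depth-$2$ gradings via Dynkin diagrams (your dichotomy --- one odd node with mark $2$ and no other odd nodes, or exactly two odd nodes each with mark $1$ --- is correct), and then prune by hypotheses (3) and (4). The paper instead begins with a structural step that does most of the elimination before any diagram is drawn: writing $\s_{\bar 0}=\bigoplus_i\s^i\oplus\mathfrak z$, it shows using the invariant form $B$ that the center satisfies $\mathfrak z=\mathfrak z_0$, and that the unique simple ideal $\s^1$ containing $\s_{-2}=V$ is graded as $\s^1_{-2}\oplus\s^1_0\oplus\s^1_2$ with $\s^1_0\simeq\mathfrak{co}(V)$, hence $\s^1\simeq\so(D+2)$. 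Reading Table \ref{Kactab} against this constraint, together with the requirement that $\s_{\bar 1}$ contain a half-spin factor for $\so(D+2)$ (with the triality caveat at $D=6$), immediately fixes the short list of candidate superalgebras --- in particular it pins the ``$4$'' in $\mathfrak{osp}(\cdot|4)$ and $\mathfrak{sl}(\cdot|4)$ --- and only then is the Dynkin-diagram inspection of \cite{Leites} invoked to identify the gradings. Your route has to confront the unbounded-rank families ($\mathfrak{sl}(m+1|n+1)$, $\mathfrak{osp}(2m+1|2n)$, $\mathfrak{osp}(2m|2n)$) head-on at the enumeration stage, where the number of inequivalent diagrams grows with rank; without something like the $\s^1\simeq\so(D+2)$ observation, the ``delicate bookkeeping'' you flag is genuinely heavy, though not impossible. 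One small correction: in the $\mathfrak{sl}$/$\mathfrak{psl}$ cases the exceptional isomorphism $\mathfrak{sl}(4)\simeq\so(6)$ identifies the simple factor of $\s_{\bar 0}$ containing $\s_{-2}$ (namely $\so(D+2)$), not $\so(V)$ itself, so only $\dim V=4$ occurs there, not $\dim V\in\{4,6\}$; the case $\dim V=6$ arises from $\mathfrak{osp}(8|2n)$, where $\s^1\simeq\so(8)$. Also, fundamentality and transitivity are hypotheses on $\s$ here rather than conditions to be verified, so your third and fourth checks can be dropped.
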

\begin{remark}
We recall that the simple Lie superalgebra $\mathfrak{psl}(n+1|n+1)$ (for $n\geq 1$) is the quotient of $\mathfrak{sl}(n+1|n+1)$ by its one-dimensional center.
Moreover, the derivations of $\mathfrak{sl}(n+1|n+1)$ and $\mathfrak{psl}(n+1|n+1)$ are all induced by the adjoint action of elements in $\mathfrak{gl}(n+1|n+1)$; in particular, the center of $\mathfrak{sl}(n+1|n+1)$ has degree zero for any $\mathbb{Z}$-grading of $\mathfrak{sl}(n+1|n+1)$ and  $\mathbb{Z}$-gradings of $\mathfrak{sl}(n+1|n+1)$ are in natural correspondence with those of $\mathfrak{psl}(n+1|n+1)$.

Hence the Dynkin diagram in the $\mathfrak{psl}(4|4)$-row of Table \ref{primatab} has to be understood as the Dynkin diagram of the contragredient Lie superalgebra $\mathfrak{sl}(4|4)$ and the consistent $\Z$-grading of $\mathfrak{psl}(4|4)$ as the one induced from $\mathfrak{sl}(4|4)$.
\end{remark}

\begin{sidewaystable}
\centering
\vskip13cm
\begin{tabular}{|c|c|c|c|c|}
\hline
$\s$ & Dynkin diagram & $\s_{-2}$ & $\s_{-1}$ & $\s_0$\\
\hline
\hline
&&&&\\[-3mm]
$\begin{gathered}\mathfrak{sl}(m+1|4)\\m\neq3\end{gathered}$&
\begin{tikzpicture}
\node[root]   (1)       [label=${{}^1}$]              {};
\node[xroot] (2) [right=of 1] {}[label=${{}^1}$] edge [-] (1);
\node[]   (3) [right=of 2] {$\;\cdots\,$} edge [-] (2);
\node[xroot]   (4) [right=of 3] {}[label=${{}^1}$] edge [-] (3);
\node[root]   (5) [right=of 4] {}[label=${{}^1}$] edge [-] (4);
\end{tikzpicture}&
$\C^ 4$ & $\mathbb{S}^+\otimes\C^{m+1}\oplus\mathbb{S}^-\otimes(\C^{m+1})^{*}$ &$\so(4)\oplus \C E\oplus \mathfrak{gl}(m+1)$
\\
\hline
&&&&\\[-3mm]

$\mathfrak{psl}(4|4)$&
\begin{tikzpicture}
\node[root]   (1)       [label=${{}^1}$]              {};
\node[xroot] (2) [right=of 1] {}[label=${{}^1}$] edge [-] (1);
\node[]   (3) [right=of 2] {$\;\cdots\,$} edge [-] (2);
\node[xroot]   (4) [right=of 3] {}[label=${{}^1}$] edge [-] (3);
\node[root]   (5) [right=of 4] {}[label=${{}^1}$] edge [-] (4);
\end{tikzpicture}&
$\C^ 4$ & $\mathbb{S}^+\otimes\C^{4}\oplus\mathbb{S}^-\otimes(\C^{4})^{*}$ &$\so(4)\oplus \C E\oplus \mathfrak{sl}(4)$
\\
\hline
&&&&\\[-3mm]
$\mathfrak{osp}(1|4)$&
\begin{tikzpicture}
\node[root]   (3) [label=${{}^2}$]      {} ;
\node[broot]   (4) [right=of 3] {} [label=${{}^2}$]     edge [-] (3);
\end{tikzpicture}&
$\C^3$ & $\mathbb{S}$ & $\so(3)\oplus\C E$
\\
\hline
&&&&\\[-3mm]
$\begin{gathered}\mathfrak{osp}(2m+1|4)\\ m\geq1\end{gathered}$&
\begin{tikzpicture}
\node[root]   (3) [label=${{}^2}$] {};
\node[xroot]   (4) [right=of 3] {} [label=${{}^2}$]  edge [-] (3);
\node[]   (5) [right=of 4] {$\;\cdots\,$} edge [-] (4);
\node[root]   (7) [right=of 5] {} [label=${{}^2}$]  edge [rdoublearrow] (5);
\end{tikzpicture}&
$\C^3$ & $\mathbb{S}\otimes\C^{2m+1}$ & $\so(3)\oplus\C E\oplus\so(2m+1)$
\\
\hline 
&&&&\\[-3mm]
$\mathfrak{osp}(2|4)$&
\begin{tikzpicture}
\node[root]   (3) [label=${{}^2}$] {} ;
\node[xroot]   (4) [above right=of 3] [label=right:${{}^1}$] {} edge [-] (3);
\node[xroot]   (5) [below right=of 3] [label=right:${{}^1}$] {} edge [-] (3) edge [doublenoarrow] (4);
\end{tikzpicture}&
$\C^3$ & $\mathbb{S}\otimes\C^2$ & $\so(3)\oplus\C E \oplus\so(2)$
\\
\hline 
&&&&\\[-3mm]
$\begin{gathered}\mathfrak{osp}(2m|4)\\ m\geq2\end{gathered}$&
\begin{tikzpicture}
\node[root]   (3)[label=${{}^2}$] {} ;
\node[xroot]   (4) [right=of 3][label=${{}^2}$] {} edge [-] (3);
\node[]   (6) [right=of 4] {$\;\cdots\quad$} edge [-] (4);
\node[root]   (7) [above right=of 6] [label=${{}^1}$]{} edge [-] (6);
\node[root]   (8) [below right=of 6] [label=${{}^1}$]{} edge [-] (6);
\end{tikzpicture}&
$\C^3$ & $\mathbb{S}\otimes\C^{2m}$ & $\so(3)\oplus\C E \oplus\so(2m)$
\\
\hline
&&&&\\[-3mm]
$\begin{gathered}\mathfrak{osp}(8|2n)\\ n\geq1\end{gathered}$&
\begin{tikzpicture}
\node[root]   (1)    [label=${{}^1}$]                 {};
\node[root] (2) [right=of 1] [label=${{}^2}$]{} edge [-] (1);
\node[root] (3) [right=of 2] [label=${{}^2}$]{} edge [-] (2);
\node[xroot] (4) [right=of 3] [label=${{}^2}$]{} edge [-] (3);
\node[]   (5) [right=of 4] {$\;\cdots\,$} edge [-] (4);
\node[root]   (6) [right=of 5] [label=${{}^1}$]{} edge [doublearrow] (5);
\end{tikzpicture}&
$\C^6$ & $\mathbb{S}^+\otimes \C^{2n}$ & $\so(6)\oplus\C E\oplus \mathfrak{sp}(2n)$
\\
\hline
&&&&\\[-3mm]
$\mathfrak{ab}(3)$&
\begin{tikzpicture}
\node[root]   (1)        [label=${{}^1}$]             {};
\node[xroot] (2) [right=of 1] {} [label=${{}^2}$]  edge [rtriplearrow] (1) edge [-] (1);
\node[root]   (3) [right=of 2] {} [label=${{}^3}$] edge [-] (2);
\node[root]   (4) [right=of 3] {} [label=${{}^2}$] edge [doublearrow] (3);
\end{tikzpicture}&
$\C^5$ & $\mathbb{S}\otimes\C^2$ & $\so(5)\oplus \C E\oplus\mathfrak{sl}(2)$\\
\hline 
\end{tabular}
\vskip0.7cm
\caption[]{\label{primatab}}
\end{sidewaystable}

\begin{proof}
First one looks at the even part $\s_{\bar0}$ of the $\Z$-graded Lie superalgebra $\s$. It is a reductive Lie algebra $\s_{\bar0}=[\s_{\bar0},\s_{\bar0}]\oplus\mathfrak{z}$ where $[\s_{\bar0},\s_{\bar0}]=\bigoplus_i\s^i$ is a direct sum of simple ideals and $\mathfrak z$ is the center of $\s_{\bar0}$. 

The $\Z$-grading of $\s$ induces $\Z$-gradings $\s^i=\s^i_{-2}\oplus\s^i_{0}\oplus\s^i_{2}$ on each simple factor and on the center $\mathfrak z=\bigoplus_{p\geq -1}\mathfrak z_{2p}$. 

By hypothesis (3), $\mathfrak z_{-2}=0$. 
The invariant bilinear form $B$ of $\s$ is non-degenerate on $\s_{\bar0}$ and hence on each $\s^i$ and on $\mathfrak z$. In all cases where $\mathfrak z$ is not zero, $B$ coincides with the Killing form of $\s$ (this follows from the discussion at the beginning of \S\ref{ebbeneebbene}, together with a direct inspection of Table \ref{Kactab}). It follows that the $\mathfrak z_p$ and $\mathfrak z_{-p}$ are dual to each other, and then $\mathfrak z=\mathfrak z_0$.

Let $D=\dim V$. By hypothesis (1) and (3), one can assume without loss of generality that $\s_{-2}\subset\s^1$ and $\s^i\subset\s_0$ for all $i\geq2$. In particular, the ideal $\so(V)\simeq \so(D)$ of $\s_0$ is contained in $\s^1$. 
The ideal $\s^1$ of $\s_{\bar0}$ has then a $\Z$-grading 
$$\s^1=\s^1_{-2}\oplus\s^1_0\oplus\s^1_2\,,$$
where
$$\s^1_{-2}=V\,\,,\,\,\s^1_0=(\so(V)\niplus\mathfrak e)\,\,,\,\,\s^1_{2}= V^*$$ 
and $\mathfrak e$ is an ideal of $\s^1_0$. It follows that $\s^1_0\simeq\mathfrak{co}(V)$ and $\s^1\simeq\so(D+2)$.

From the description of the even part of the basic Lie superalgebras given in Table \ref{Kactab}, $\s$ must be one of the following Lie superalgebras: $\mathfrak{osp}(2m+1|4)$, $\mathfrak{osp}(2|4)$ or $\mathfrak{osp}(2m|4)$ (for $m\geq 2$) for $D=3$; $\mathfrak{sl}(m+1|4)$ (for $m\neq 3$) or $\mathfrak{psl}(4|4)$ for $D=4$; $\mathfrak{ab}(3)$ for $D=5$; $\mathfrak{osp}(2m+1|2n)$ (for $m\geq 1$, $n\geq 1$) for $D=2m-1$; $\mathfrak{osp}(2m|2n)$ (for $m\geq 2$, $n\geq 1$) for $D=2m-2$. 

By hypothesis (4), the representation of $\so(V)$ on the odd part $\s_{\bar 1}$ contains at least one factor of half-spin type. This implies that also the representation of $\s^1\simeq\so(D+2)$ on $\s_{\bar 1}$
contains a factor of half-spin type (in the case $D=6$, since the semi-spinor and the tautological representations of $\so(8)$ are related by triality, this condition must hold true for \emph{some} identification $\s^1\simeq \so(8)$). 

By looking at Table \ref{Kactab}, one obtains exactly the simple Lie superalgebras $\s$ listed in the first column of Table \ref{primatab}.
\smallskip
\par
To conclude the proof, one first determines all $\mathbb{Z}$-gradings of depth $2$ of the $\s$ in the above list which satisfy hypotheses (1)--(3) and such that $$[\s_{0},\s_{0}]=\s^1_0\oplus\bigoplus_{i\geq 2}\s^i\ .$$
A case by case analysis of \cite[Tables 1-5]{Leites} reveals that the Dynkin diagrams satisfying the previous conditions are exactly those displayed in Table \ref{primatab}.

Finally, by using the explicit description in \cite{Kac77a} of the root systems associated to  the Dynkin diagrams of Table \ref{primatab}, it is a tedious but straightforward task to check that all the listed $\Z$-gradings also satisfy hypothesis (4). 
\end{proof}
We now explicitly describe the negatively graded parts $\s_{-2}\oplus\s_{-1}$ of the $\mathbb{Z}$-graded Lie superalgebras listed in Table \ref{primatab}.
In all cases except $\mathfrak{osp}(8|2n)$, the negatively graded part is isomorphic to a supertranslation algebra. 

For any Lie superalgebra $\s$ of Table \ref{primatab} (except $\mathfrak{osp}(8|2n)$), we now exhibit a non-degenerate bilinear form $\mcB$ on $W=\mathbb S\oplus\cdots\oplus\mathbb S$ satisfying
\begin{enumerate}
\item[(B1)]
$\mcB$ is $\so(V)$-invariant,
\item[(B2)]
$\mcB$ is symmetric ($\epsilon=1$) or anti-symmetric ($\epsilon=-1$), 
\item[(B3)]
for all $v\in V$ and $s,t\in W$, $\mcB(v\cdot s,t)=\epsilon\mcB(s,v\cdot t)$
\end{enumerate}
and such that the corresponding supertranslation algebra $\m=V\oplus W$ with the bracket \eqref{equazione_bracket} is identifiable with the negatively graded part $\s_{-2}\oplus\s_{-1}$ of $\s$.
\vskip0.3cm\par
\begin{example}$\mathfrak{sl}(m+1|4)$ (for $m\neq 3$) and $\mathfrak{psl}(4|4)$.\vskip0.3cm\par
\label{special}
As an $(\so(4)\oplus\mathfrak{sl}(m+1))$-module, $\s_{-1}$ is isomorphic to 
$$(\mathbb{S^+}\otimes \mathbb{C}^{m+1})\oplus(\mathbb{S^-}\otimes (\mathbb{C}^{m+1})^{*})$$
and the bracket $S^2(\s_{-1})\rightarrow V$ is induced by an $(\so(4)\oplus\mathfrak{sl}(m+1))$-equivariant map
$$
(\mathbb{S^+}\otimes \mathbb{C}^{m+1}) \otimes (\mathbb{S^-}\otimes (\mathbb{C}^{m+1})^{*})\rightarrow V\ .
$$
Hence, there exists an $\so(4)$-equivariant map $\tilde\Gamma:\mathbb{S^+}\otimes \mathbb{S^-}\rightarrow V$ such that
\[
[s^+\otimes c,s^-\otimes c^*]=\tilde\Gamma(s^+,s^-)c^*(c)\,,
\]
for any $s^{\pm}\in\mathbb{S^\pm}$, $c\in\mathbb{C}^{m+1}$ and $c^*\in(\mathbb{C}^{m+1})^{*}$.
By the results of \cite{AC}, the map $\tilde\Gamma$ is uniquely determined by a non-degenerate bilinear form $b:\mathbb{S}\otimes\mathbb{S}\rightarrow\mathbb{C}$ with invariants $(\tau,\sigma)=(-,-)$ via the usual formula $(\tilde\Gamma(s^+,s^-),v)=b(v\cdot s^+,s^-)$.

Identifying $\mathbb{C}^{m+1}$ and $(\mathbb{C}^{m+1})^{*}$, using a non-degenerate symmetric bilinear form $\delta$ on $\mathbb{C}^{m+1}$,
we define a Clifford multiplication $\s_{-2}\otimes \s_{-1}\rightarrow \s_{-1}$ by
\begin{equation}
\label{CliffA}
v\cdot (s\otimes c)=(v\cdot s)\otimes c\ ,
\end{equation}
and a non-degenerate $\so(4)$-invariant bilinear form $\mcB:\s_{-1}\otimes\s_{-1}\rightarrow\mathbb{C}$ by
\[
\mcB(s\otimes c,t\otimes d)=b(s,t)\delta(c,d)\ .
\]
Direct computations show that the Clifford multiplication \eqref{CliffA} and $\mcB$ satisfy conditions (B1), (B2), (B3).
\end{example}
\vskip0.3cm\par

\begin{example}$\mathfrak{osp}(2m+1|4)$ (for $m\geq 0$) and $\mathfrak{osp}(2m|4)$ (for $m\geq 1$).\vskip0.3cm\par
\label{orto}
This is the orthosymplectic case $\s=\mathfrak{osp}(N|4)$ with $N\geq 1$. In this case, $$\s_{-1}\simeq\mathbb{S}\otimes \mathbb{C}^{N}$$ as an $(\so(3)\oplus\mathfrak{so}(N))$-module and
the bracket is given by an $(\so(3)\oplus\mathfrak{so}(N))$-equivariant map
$
S^2(\mathbb{S})\otimes S^2(\mathbb{C}^{N})\rightarrow V
$.

There exists an $\so(3)$-equivariant map $\tilde\Gamma:S^2(\mathbb{S})\rightarrow V$ and a non-degenerate symmetric $\mathfrak{so}(N)$-invariant bilinear form $\delta$ on $\mathbb{C}^{N}$, given by the matrix with anti-diagonal entries equal to $1$,
such that
\[
[s\otimes c,t\otimes d]=\tilde\Gamma(s,t)\delta(c,d)\ ,
\]
for any $s,t\in\mathbb{S}$ and $c,d\in\mathbb{C}^{N}$. The map $\tilde\Gamma$ is uniquely determined by a non-degenerate bilinear form $b:\mathbb{S}\otimes\mathbb{S}\rightarrow\mathbb{C}$ with invariants $(\tau,\sigma)=(-,-)$, via the usual formula $(\tilde\Gamma(s,t),v)=b(v\cdot s,t)$.

We again define a Clifford multiplication by \eqref{CliffA} together with a non-degenerate $(\so(3)\oplus\mathfrak{so}(N))$-invariant bilinear form $\mcB=b\otimes \delta:\s_{-1}\otimes\s_{-1}\rightarrow\mathbb{C}$ which satisfy conditions (B1), (B2), (B3).
\end{example}
\vskip0.3cm\par
\begin{example}$\mathfrak{ab}(3)$.\vskip0.3cm\par
\label{effe}
In this exceptional case, $$\s_{-1}\simeq\mathbb{S}\otimes\C^2$$ as an $(\so(5)\oplus\mathfrak{sl}(2))$-module and
the bracket is given by an $(\so(5)\oplus\mathfrak{sl}(2))$-equivariant map
$
\Lambda^2(\mathbb{S})\otimes\Lambda^2(\mathbb{C}^{2})\rightarrow V
$.

Hence, there exists an $\so(5)$-equivariant map $\tilde\Gamma:\Lambda^2(\mathbb{S})\rightarrow V$ and a non-degenerate $\mathfrak{sl}(2)$-invariant bilinear form $\omega$ on $\mathbb{C}^2$ such that
\[
[s\otimes c,t\otimes d]=\tilde\Gamma(s,t)\omega(c,d)\ ,
\]
for any $s,t\in\mathbb{S}$ and $c,d\in\mathbb{C}^2$. 
The map $\tilde\Gamma$ is uniquely determined by a non-degenerate bilinear form $b:\mathbb{S}\otimes\mathbb{S}\rightarrow\mathbb{C}$ with invariants $(\tau,\sigma)=(+,-)$, via the usual formula $(\tilde\Gamma(s,t),v)=b(v\cdot s,t)$. 

We again define a Clifford multiplication by \eqref{CliffA} together with a non-degenerate $(\so(5)\oplus\mathfrak{sl}(2))$-invariant bilinear form $\mcB=b\otimes \omega:\s_{-1}\otimes\s_{-1}\rightarrow\mathbb{C}$ which satisfy conditions (B1), (B2), (B3).
\end{example}
\vskip0.3cm\par
In the $\mathfrak{osp}(8|2n)$ case the negatively graded part of $\s$ is not a supertranslation algebra, but it nevertheless admits a similar description in terms of semi-spinor modules.
\vskip0.3cm\par
\begin{example}$\mathfrak{osp}(8|2n)$ (for $n\geq 1)$.\vskip0.3cm\par
In this case, $$\s_{-1}\simeq\mathbb{S}^+\otimes \mathbb{C}^{2n}$$ as an $(\so(6)\oplus\mathfrak{sp}(2n))$-module and
the bracket is given by an $(\so(6)\oplus\mathfrak{sp}(2n))$-equivariant map
$
\Lambda^2(\mathbb{S}^+)\otimes\Lambda^2(\mathbb{C}^{2n})\rightarrow V
$.

There exists an $\so(6)$-equivariant map $\tilde\Gamma:\Lambda^2(\mathbb{S}^+)\rightarrow V$ together with a non-degenerate $\mathfrak{sp}(2n)$-invariant bilinear form $\omega$ on $\mathbb{C}^{2n}$ such that
\[
[s^+\otimes c,t^+\otimes d]=\tilde\Gamma(s^+,t^+)\omega(c,d)\ ,
\]
for any $s^+,t^+\in\mathbb{S}^+$ and $c,d\in\mathbb{C}^{2n}$.
\label{notfit}
\end{example}
\subsection{The maximal transitive prolongation}\hfill
\vskip0.3cm\par
So far we proved (Theorems \ref{nonloso}, \ref{findim}, \ref{classimple}) that the maximal transitive prolongation $\g$ of a supertranslation algebra $\m=V\oplus W$ with $\dim V\geq 3$ either satisfies $\g_p=0$ for all $p\geq 1$ or
\begin{enumerate}
\item[$-$] $\g$ is a finite-dimensional semisimple Lie superalgebra,
\item[$-$] $\g$ has a unique minimal ideal $\s$ which is a simple prolongation of $\m$,
\item[$-$] $\s$ is one of the $\Z$-graded Lie superalgebras listed in Table \ref{primatab}. 
\end{enumerate}
In this section, for each choice of $\s$ in Table \ref{primatab} we determine the corresponding maximal prolongation $\g$. It turns out that $\s=\g$ except in the case where $\s=\mathfrak{psl}(4|4)$ and $\g=\mathrm{der}(\mathfrak{psl}(4|4))\simeq\mathfrak{pgl}(4|4)$.

\begin{theorem}\label{teorematotale}
Let $\m=V\oplus W$ be a supertranslation algebra satisfying $\dim V\geq 3$, and $\g$ the maximal transitive prolongation of $\m$. If $\g_1\neq0$, then $\g$ is one of the Lie superalgebras listed in Table \ref{tabmax}. Therein, the symbol "$\cdots$" appearing in a Dynkin diagram of a Lie superalgebra denotes a subdiagram corresponding to a Lie algebra $\mathfrak{sl}(\ell+1)$ of appropriate (possibly zero) rank $\ell$.
\end{theorem}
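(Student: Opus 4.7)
The plan is to identify $\g$ with the full derivation superalgebra $\der(\s)$ of its minimal ideal $\s$ and then to compute $\der(\s)$ case by case. By Theorems \ref{simpleprol}, \ref{findim} and \ref{classimple}, the hypothesis $\g_1\neq 0$ forces $\g$ to be finite-dimensional and semisimple with a unique minimal ideal $\s$, which is one of the basic Lie superalgebras listed in Table \ref{primatab}, equipped with the consistent $\Z$-grading determined by the associated Dynkin diagram.

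First I would embed $\g$ into $\der(\s)$ via the adjoint action. The centralizer $C_{\g}(\s)$ is an ideal of $\g$ and, since $\s$ is simple and hence non-abelian, minimality of $\s$ forces $C_{\g}(\s)=0$. This gives an injection $\g\hookrightarrow\der(\s)$, and the $\Z$-grading of $\s$ extends in the obvious way to a compatible $\Z$-grading of $\der(\s)$.

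Next I would show that $\der(\s)$, with the induced grading, is itself a transitive prolongation of $\m$. Any derivation $D$ of degree $p<-2$ must kill both $\s_{-1}$ and $\s_{-2}$ for degree reasons, and the same conclusion holds in any degree: if $D\in\der(\s)_p$ with $[D,\s_{-1}]=0$, then fundamentality of $\m$ yields $[D,\s_{-2}]=[D,[\s_{-1},\s_{-1}]]=0$, and an induction on $q$ using the Leibniz rule and the transitivity of $\s$ (Theorem \ref{simpleprol}) gives $[D,\s_q]=0$ for all $q$, hence $D=0$. The same type of argument, applied to the inner derivations, shows that $\der(\s)_p=\s_p=\m_p$ for $p=-1,-2$. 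By the maximality property of $\g$ one thus obtains $\der(\s)\subseteq\g$, and combined with the embedding of the first step this gives $\g=\der(\s)$.

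The final step is the case-by-case determination of $\der(\s)$ for $\s$ running through Table \ref{primatab}. For every $\s$ in the list other than $\mathfrak{psl}(4|4)$, all derivations are inner (see \cite{Kac77a, Serga, Leites}), so $\der(\s)=\s$ and $\g=\s$ contributes the corresponding unchanged row of Table \ref{tabmax}. The sole exceptional case is $\s=\mathfrak{psl}(4|4)$: here one has $\der(\mathfrak{psl}(4|4))\simeq\mathfrak{pgl}(4|4)=\mathfrak{gl}(4|4)/\C\Id$, the extra summand being the outer derivation $d$ induced by any element of $\mathfrak{gl}(4|4)$ of non-zero supertrace. Since $d$ has degree $0$ in the induced $\Z$-grading and acts non-trivially on $\s_{-1}$ (with opposite eigenvalues on the two summands $\mathbb S^+\otimes\C^4$ and $\mathbb S^-\otimes(\C^4)^*$), it contributes an additional summand to $\g_0$ and yields $\g=\mathfrak{pgl}(4|4)$, completing the corresponding row of Table \ref{tabmax}. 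I expect the main technical obstacle to be the transitivity verification of $\der(\s)$ in the second step, which rests on a careful inductive use of the fundamentality of $\m$ together with the transitivity of $\s$ established in Theorem \ref{simpleprol}, and which is also what guarantees that the outer derivation $d$ is indeed captured by the maximal prolongation in the $\mathfrak{psl}(4|4)$ case.
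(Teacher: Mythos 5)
Your overall route is the same as the paper's: sandwich $\g$ between its minimal ideal $\s$ and $\der(\s)$, and then read off $\der(\s)$ from the classification of derivations of simple Lie superalgebras, with $\mathfrak{psl}(4|4)$ as the only case carrying an outer derivation. Where the paper obtains the inclusion $\g\subset\der(\s)$ by citing the socle description of semisimple Lie superalgebras from \cite{Kac77a, Cheng1995}, you derive it directly from the vanishing of the centralizer $C_\g(\s)$; that is a clean and more elementary substitute, and it is correct (a non-zero $C_\g(\s)$ would contain the unique minimal ideal $\s$, forcing $\s$ to be abelian). One small omission: you should record that $\s\neq\mathfrak{osp}(8|2n)$, since the negative part of that grading is not a supertranslation algebra (Example \ref{notfit}); otherwise Table \ref{tabmax} would fail to account for every entry of Table \ref{primatab}.

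The one step that does not work as stated is your general verification that $\der(\s)$, with the induced grading, is a prolongation of $\m$ --- specifically the negative-degree assertions $\der(\s)_p=0$ for $p<-2$ and $\der(\s)_p=\m_p$ for $p=-1,-2$. The inductive scheme ``$[[D,X],\s_{-1}]=[D,[X,\s_{-1}]]=0$, hence $[D,X]=0$'' is sound whenever $[D,X]$ lies in degree $\geq 0$ (transitivity of $\s$) or in degree $-1$ (non-degeneracy of $\Gamma$), but it gives nothing when $[D,X]$ lands in $\s_{-2}=V$, because $[V,\s_{-1}]=0$ in a supertranslation algebra; for instance, for $D\in\der(\s)_{-3}$ and $X\in\s_{1}$ the argument cannot exclude $[D,X]\neq0$. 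So transitivity of $\der(\s)$ in degrees $\geq0$ is fine, but the statements about its negative part do not follow from this induction alone. The gap is harmless in the end: the case-by-case facts you invoke in your last step (all derivations inner except for $\mathfrak{psl}(4|4)$, whose single outer derivation has degree $0$ and acts with opposite non-zero eigenvalues on the two summands of $\s_{-1}$) already give $\der(\s)_p=\s_p$ for all $p\neq0$, which is exactly what the negative-degree part requires, while for $p\geq0$ your induction legitimately verifies transitivity of $\mathfrak{pgl}(4|4)$ as a prolongation of $\m$ --- which is also the only place where the paper needs such a check.
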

{\small
\begin{table}[H]
\begin{centering}
\begin{tabular}{|c|c|c|c|c|c|}
\hline
$\g$ & Dynkin diagram & $\dim V$ & $\dim W$ & $N$ & $\h_0$\\
\hline
\hline
&&&&&\\[-3mm]

$\begin{gathered}\mathfrak{sl}(m+1|4)\\m\neq3\end{gathered}$&
\begin{tikzpicture}
\node[root]   (1)       [label=${{}^1}$]              {};
\node[xroot] (2) [right=of 1] {}[label=${{}^1}$] edge [-] (1);
\node[]   (3) [right=of 2] {$\;\cdots\,$} edge [-] (2);
\node[xroot]   (4) [right=of 3] {}[label=${{}^1}$] edge [-] (3);
\node[root]   (5) [right=of 4] {}[label=${{}^1}$] edge [-] (4);
\end{tikzpicture}&
$4$ & $4N$ & $m+1$ & $\mathfrak{gl}(m+1)$
\\
\hline
&&&&&\\[-3mm]

$\mathfrak{pgl}(4|4)$&
\begin{tikzpicture}
\node[root]   (1)       [label=${{}^1}$]              {};
\node[xroot] (2) [right=of 1] {}[label=${{}^1}$] edge [-] (1);
\node[]   (3) [right=of 2] {$\;\cdots\,$} edge [-] (2);
\node[xroot]   (4) [right=of 3] {}[label=${{}^1}$] edge [-] (3);
\node[root]   (5) [right=of 4] {}[label=${{}^1}$] edge [-] (4);
\end{tikzpicture}&
$4$ & $4N$ & $4$ & $\mathfrak{gl}(4)$
\\
\hline
&&&&&\\[-3mm]

$\mathfrak{osp}(1|4)$&
\begin{tikzpicture}
\node[root]   (3) [label=${{}^2}$]      {} ;
\node[broot]   (4) [right=of 3] {} [label=${{}^2}$]     edge [-] (3);
\end{tikzpicture}&
$3$ & $2$ & $1$ & $0$
\\
\hline
&&&&&\\[-3mm]

$\begin{gathered}\mathfrak{osp}(2m+1|4)\\ m\geq1\end{gathered}$&
\begin{tikzpicture}
\node[root]   (3) [label=${{}^2}$] {};
\node[xroot]   (4) [right=of 3] {} [label=${{}^2}$]  edge [-] (3);
\node[]   (5) [right=of 4] {$\;\cdots\,$} edge [-] (4);
\node[root]   (7) [right=of 5] {} [label=${{}^2}$]  edge [rdoublearrow] (5);
\end{tikzpicture}&
$3$ & $2N$ & $2m+1$ & $\so(2m+1)$
\\
\hline 
&&&&&\\[-3mm]

$\mathfrak{osp}(2|4)$&
\begin{tikzpicture}
\node[root]   (3) [label=${{}^2}$] {} ;
\node[xroot]   (4) [above right=of 3] [label=right:${{}^1}$] {} edge [-] (3);
\node[xroot]   (5) [below right=of 3] [label=right:${{}^1}$] {} edge [-] (3) edge [doublenoarrow] (4);
\end{tikzpicture}&
$3$ & $2N$ & $2$ & $\so(2)$
\\
\hline 
&&&&&\\[-3mm]

$\begin{gathered}\mathfrak{osp}(2m|4)\\ m\geq2\end{gathered}$&
\begin{tikzpicture}
\node[root]   (3)[label=${{}^2}$] {} ;
\node[xroot]   (4) [right=of 3][label=${{}^2}$] {} edge [-] (3);
\node[]   (6) [right=of 4] {$\;\cdots\quad$} edge [-] (4);
\node[root]   (7) [above right=of 6] [label=${{}^1}$]{} edge [-] (6);
\node[root]   (8) [below right=of 6] [label=${{}^1}$]{} edge [-] (6);
\end{tikzpicture}&
$3$ & $2N$ & ${2m}$ & $\so(2m)$
\\
\hline
&&&&&\\[-3mm]

$\mathfrak{ab}(3)$&
\begin{tikzpicture}
\node[root]   (1)        [label=${{}^1}$]             {};
\node[xroot] (2) [right=of 1] {} [label=${{}^2}$]  edge [rtriplearrow] (1) edge [-] (1);
\node[root]   (3) [right=of 2] {} [label=${{}^3}$] edge [-] (2);
\node[root]   (4) [right=of 3] {} [label=${{}^2}$] edge [doublearrow] (3);
\end{tikzpicture}&
$5$ & $4N$ & $2$ & $\mathfrak{sl}(2)$\\
\hline 
\end{tabular}
\end{centering}
\caption[]{\label{tabmax}}   
\vskip14pt\par
\end{table}}
\begin{proof}
By Theorem \ref{nonloso}, $\g$ is semisimple and contains a unique minimal ideal $\s$, which is a simple prolongation of $\m$ with $\so(V)\subset\s_0\subset\g_0$ and $\g_{\bar1}=\s_{\bar1}$. By Theorem \ref{findim}, $\g$ and $\s$ are finite-dimensional. It follows that $\s$ is one of the Lie superalgebras in Table \ref{primatab}, different from $\mathfrak{osp}(8|2n)$.

Recall that the {\it socle} of $\g$ is the sum of all non-zero minimal ideals of $\g$, and it is proved in \cite{Kac77a, Cheng1995} that it is of the form $\bigoplus_{i=1}^s (\s^i\otimes\Lambda(\mathbb C^{m_i}))$, where $\s^i$ is a simple Lie superalgebra and $m_i$ a non-negative integer, for every $i=1,\ldots,s$. 

The socle of the maximal transitive prolongation $\g$ of a supertranslation algebra $\m$ which satisfies $\dim V\geq 3$ and $\g_1\neq 0$ equals $\s$, and then $s=1$ and $m_1=0$ in this case.

Moreover, from the characterization of semisimple Lie superalgebras in \cite{Kac77a, Cheng1995} it follows that
$\s\subset\g\subset\mathrm{der}\,\s$.

To conclude, observe that $\s=\mathrm{der}(\s)$ for all Lie superalgebras in Table \ref{primatab}, with the exception of $\mathrm{der}(\mathfrak{psl}(4|4))\simeq \mathfrak{pgl}(4|4)$ \cite{Kac77a}. It is straightforward to check that the negatively graded part of $\mathrm{der}(\mathfrak{psl}(4|4))$ coincides with $\m$ and that $\mathrm{der}(\mathfrak{psl}(4|4))$  is a transitive prolongation of $\m$.
\end{proof}

\section{The classification}
\label{sec3}
\label{sec4}
\label{subsub1}
In this section we explicitly describe all the maximal transitive prolongations of supertranslation algebras, for all possible dimensions of $V$ and all $N\geq 1$. 
We include the cases $\dim V=1$ or $2$: Theorem \ref{findim} does not apply, and the maximal transitive prolongation turns out to be infinite-dimensional.

In the next Theorem, we denote by $K(m|n)$ the infinite-dimensional contact superalgebra in dimension $(m|n)$, with $m=2k+1$  (see, e.g., \cite{Kac98} for more details). By \cite[Prop. 3.1.3]{CK99}, $K(m|n)$ with its principal $\mathbb{Z}$-grading is the maximal transitive prolongation of its negatively graded part $$K(m|n)_{-2}\oplus K(m|n)_{-1}\,,$$
where
$$
K(m|n)_{-2}\simeq \C\,\,\,\,\text{and}\,\,\,\,K(m|n)_{-1}\simeq\C^{2k}\oplus\Pi\C^{n}\ .
$$
It is a simple Lie superalgebra.
\begin{theorem}
\label{N1c}
Let $V$ be a complex vector space with a non-degenerate symmetric bilinear form, $\mathfrak{m}=V\oplus W$ a supertranslation algebra and $\g=\bigoplus_{p\in\Z} \g_p$ the maximal transitive prolongation of $\m$.

If $\dim V=1$ or $2$, then $\g$ is infinite-dimensional and  
\begin{enumerate}
\item[$-$] $\mathfrak{g}=K(1|N)$ if $\dim V=1$,
\item[$-$] $\mathfrak{g}=K(1|N)\oplus K(1|N)$ if $\dim V=2$. 
\end{enumerate}
If $\dim V\geq 3$, then $\g$ is finite-dimensional, $\mathfrak{g}_0$ is as in Theorem \ref{lo0}, and either $\g_p=0$ for all $p \geq 1$ or
\begin{itemize}
\item[$-$] $\g=\mathfrak{osp}(N|4)$, $\dim V=3$ with  $\m$ as in  Example \ref{orto},
\item[$-$] $\mathfrak{g}=\mathfrak{pgl}(N|4)$, $\dim V=4$ with $\m$ as in  Example \ref{special},
\item[$-$] $\mathfrak{g}= \mathfrak{ab}(3)$, $\dim V=5$ with $\m$ as in  Example \ref{effe},
\end{itemize}
and with $\Z$-gradings as described in Table \ref{tabmax}. \\
\end{theorem}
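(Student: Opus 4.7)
The plan is to assemble the classification from the structural results established earlier in the paper, treating separately the low-dimensional cases $\dim V = 1, 2$ which fall outside the scope of those results.

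For $\dim V \geq 3$, the theorem is essentially a repackaging. Theorem \ref{findim} gives finite-dimensionality, Theorem \ref{lo0} gives the stated description of $\g_0$, and Theorem \ref{nonloso} provides the dichotomy between $\g_p = 0$ for all $p \geq 1$ and the case where $\g$ is semisimple with a unique simple minimal ideal $\s$ that is a prolongation of $\m$. In the latter case, Theorem \ref{teorematotale} identifies $\g$ as one of the entries of Table \ref{tabmax}; what remains is to group those entries by $\dim V$ and verify that the corresponding supertranslation algebras are realised by Examples \ref{special}, \ref{orto} and \ref{effe}. This is a direct inspection: the rows with $\dim V = 3$ give $\mathfrak{osp}(N|4)$ for arbitrary $N \geq 1$, the rows with $\dim V = 4$ unify into $\mathfrak{pgl}(N|4)$ under the convention $\mathfrak{pgl}(N|4) = \mathfrak{sl}(N|4)$ for $N \neq 4$, and the row with $\dim V = 5$ yields $\mathfrak{ab}(3)$.

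For $\dim V = 1$, the spinor module $\mathbb S$ is one-dimensional and any supertranslation bracket $\Gamma \colon S^2(W) \to V$ reduces to a non-degenerate symmetric bilinear form on $W = \mathbb{S}^N$ with values in $V \simeq \mathbb C$. One observes that $\m = V \oplus W$ is canonically isomorphic to the negative part of the contact Lie superalgebra $K(1|N)$ in its principal $\Z$-grading. The identification $\g \simeq K(1|N)$ then follows from the cited fact \cite[Prop.~3.1.3]{CK99} that $K(1|N)$ is itself the maximal transitive prolongation of its own negative part.

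For $\dim V = 2$, the strategy is to reduce to the previous case. I would exploit a null basis $\{e_+, e_-\}$ of $V$ with $(e_+, e_-) = 1$ and $(e_\pm, e_\pm) = 0$; the spinor module splits into one-dimensional semi-spinor components $\mathbb S = \mathbb S^+ \oplus \mathbb S^-$, with $\mathbb S^+$ annihilated by $e_-$ and $\mathbb S^-$ annihilated by $e_+$. Setting $W^{\pm} = (\mathbb S^{\pm})^N$, a short computation using property (B3) of the admissible form $\mcB$ together with the fact that $\mcB$ pairs opposite chiralities (a consequence of the classification of admissible forms in dimension two and of $\sigma\tau = 1$) yields
\[
\Gamma(W^+, W^+) \subset \mathbb C e_-, \qquad \Gamma(W^-, W^-) \subset \mathbb C e_+, \qquad \Gamma(W^+, W^-) = 0.
\]
Hence $\m$ decomposes as an ideal direct sum $\m = \m' \oplus \m''$, each summand being a supertranslation algebra of $\dim = 1$ type, and $\g \simeq K(1|N) \oplus K(1|N)$ follows from the previous paragraph.

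The main obstacle I expect is the last step in the $\dim V = 2$ argument: one must justify that the maximal transitive prolongation of a direct sum of fundamental $\Z$-graded Lie superalgebras is itself the direct sum of the maximal transitive prolongations of the summands. I would argue via transitivity: given the maximal transitive prolongation $\g$ of $\m' \oplus \m''$, define $\g' = \{D \in \g \mid [D, \m''] = 0\}$ and symmetrically $\g''$, verify that each is a transitive prolongation of the corresponding summand, and show via transitivity applied to the brackets $[D, (\m')_{-1}]$ and $[D, (\m'')_{-1}]$ that any $D \in \g_p$ ($p \geq 0$) decomposes as $D = D' + D''$ with $D' \in \g'$ and $D'' \in \g''$. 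Maximality then forces $\g \simeq K(1|N) \oplus K(1|N)$. This verification, while routine, is the only genuinely new piece of reasoning not already contained in Sections \ref{sec1}--\ref{sec2}.
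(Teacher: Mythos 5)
Your proposal is correct and follows essentially the same route as the paper: the case $\dim V\geq 3$ is assembled from Theorems \ref{lo0}, \ref{nonloso}, \ref{findim} and \ref{teorematotale} exactly as you describe, the case $\dim V=1$ is the identification $\m\simeq K(1|N)_{-2}\oplus K(1|N)_{-1}$ together with \cite[Prop. 3.1.3]{CK99}, and the case $\dim V=2$ proceeds by splitting $\m$ into two ideals each isomorphic to $K(1|N)_{<0}$ (your null-basis/semi-spinor decomposition is precisely the paper's $\so(2)$-weight decomposition). The only divergence is the final step, where the paper simply cites \cite[Prop. 3.3]{MN} for the fact that the maximal transitive prolongation of a direct sum of fundamental graded Lie (super)algebras is the direct sum of the maximal prolongations of the summands, a fact you propose to verify by hand; be aware that this verification genuinely requires the non-degeneracy of the bracket on each summand (the statement fails, e.g., for an abelian $\m$ concentrated in degree $-1$), which is exactly what forces a degree-zero derivation to preserve each of $W^{+}$ and $W^{-}$ and, inductively, what makes your decomposition $D=D'+D''$ work in positive degrees.
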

\begin{proof}
The cases $\dim V\geq 3$ are a direct consequence of Theorem \ref{lo0} and Theorem \ref{teorematotale}. One needs to prove the statement for $\dim V=1, 2$.

If $\dim V=1$, one has 
$$\mathfrak{m}_{-1}=\Pi\mathbb{C}^{N}\,\,\,\,\text{and}\,\,\,\,\mathfrak{m}_{-2}=\mathbb{C}\ .$$ 
Clearly $\m\simeq K(1|N)_{-2}\oplus K(1|N)_{-1}$, thus $\mathfrak{g}\simeq K(1|N)$.

If $\dim V=2$, one can identify $\so(2)=\C$ and there exist decompositions
$$\m_{-1}=(\Pi\C^{N})_+\oplus(\Pi\C^{N})_-\,\,\,\,\text{and}\,\,\,\,\m_{-2}=\C_{+}\oplus\C_{-}\,\,,$$ with action of $\so(2)$ given by:
\begin{align*}
\lambda \cdot s_\pm=\pm\lambda s_\pm,\quad\lambda\cdot v_\pm=\pm2\lambda v_\pm\quad (\lambda\in\mathfrak{so}(2),\ s_\pm\in (\Pi\C^{N})_\pm,\ v_\pm\in \C_\pm).
\end{align*}
Then $\m=\m_+\oplus\m_-\simeq K(1|N)_{<0}\oplus K(1|N)_{<0}$ as a direct sum of ideals. By \cite[Prop. 3.3]{MN}, whose proof remains unchanged in the Lie superalgebra case, the statement follows also for $\dim V=2$.
\end{proof}

\section{Comparison with the Lie algebra case}
\label{concl}
We classified the maximal transitive prolongations $\g$ of supertranslation algebras $\m$ in Theorem \ref{N1c}. If $\dim V\geq 3$, the Lie superalgebra $\g$ is finite-dimensional and either $\g_p=0$ for all $p\geq 1$ or $\g$ is isomorphic to $\mathfrak{osp}(N|4)$, $\mathfrak{pgl}(N|4)$, $\mathfrak{ab}(3)$.

The analogous problem in the Lie algebra setting was solved in \cite{AlSa}. In that case, the dimension of the pseudogroup of automorphisms of a manifold $M$ endowed with an extended Poincar\'e structure $\mathcal{D}$ is bounded by $\dim \g$ and equality is obtained precisely when $M$ is locally isomorphic to the maximally homogeneous model $\overline{\mathbb M}$ \cite{N2, AS}. 

Tanaka's results have never been proved in the superalgebra setting, although it is plausible that an appropriate version should hold true for supermanifolds.
One of the problems is the lack of an estabilished notion of ``super-pseudogroup of automorphisms of a supermanifold''.
Hence, the geometric implications of finite-dimensionality of $\g$ must be considered rigorously true only at an infinitesimal level.
\medskip

Table \ref{tabellina} contains the list of maximal prolongations $\g$ of extended translation algebras with $\dim V\geq 3$ and $\g_{1}\neq 0$ \cite[Theorem 3.1]{AlSa}.
Comparison with Table \ref{tabmax} reveals very clear analogies with the Lie superalgebra case for $\dim V=3,4$. The analogy extends to the Lie algebra $\mathrm{F}_4$ and the Lie superalgebra $\mathfrak{ab}(3)$, however in this case the dimension of $V$ differs.

{\small
\begin{table}[H]
\begin{centering}
\begin{tabular}{|c|c|c|c|c|c|}
\hline
$\g$ & Dynkin diagram & $\dim V$ & $\dim W$ & $N$ & $\h_0$\\
\hline
\hline
&&&&&\\[-3mm]

$\begin{gathered}\mathfrak{sl}(m+1)\\ m\geq4\end{gathered}$&\begin{tikzpicture}
\node[root]   (1)                     {};
\node[crossroot] (2) [right=of 1] {} edge [-] (1);
\node[]   (3) [right=of 2] {$\;\cdots\,$} edge [-] (2);
\node[crossroot]   (4) [right=of 3] {} edge [-] (3);
\node[root]   (5) [right=of 4] {} edge [-] (4);
\end{tikzpicture}&
$4$ & $4N$ & $m-3$ &$\mathfrak{gl}(m-3)$
\\
\hline
&&&&&\\[-3mm]

$\begin{gathered}\mathfrak{sp}(2m)\\ m\geq3\end{gathered}$&
\begin{tikzpicture}
\node[root]   (1)                     {};
\node[crossroot] (2) [right=of 1] {} edge [-] (1);
\node[]   (3) [right=of 2] {$\;\cdots\,$} edge [-] (2);
\node[root]   (5) [right=of 3] {} edge [doublearrow] (3);
\end{tikzpicture}&
$3$ & $2N$ & $2(m-2)$ & $\mathfrak{sp}(2(m-2))$
\\
\hline
&&&&&\\[-3mm]

$\mathrm F_4$&
\begin{tikzpicture}
\node[root]   (1)                     {};
\node[root] (2) [right=of 1] {} edge [-] (1);
\node[root]   (3) [right=of 2] {} edge [rdoublearrow] (2);
\node[crossroot]   (4) [right=of 3] {} edge [-] (3);
\end{tikzpicture}&
$7$ & $8$ & $1$ & $0$
\\
\hline 
&&&&&\\[-3mm]

$\mathrm E_6$&
\begin{tikzpicture}
\node[crossroot]   (1)                     {};
\node[root] (2) [right=of 1] {} edge [-] (1);
\node[root]   (3) [right=of 2] {} edge [-] (2);
\node[root]   (4) [right=of 3] {} edge [-] (3);
\node[crossroot]   (5) [right=of 4] {} edge [-] (4);
\node[root]   (6) [below=of 3] {} edge [-] (3);
\end{tikzpicture}&
$8$ & $16$ & $1$ & $\mathfrak{so}(2)$
\\
\hline
\end{tabular}
\end{centering}
\label{tabellina}
\vskip14pt\par
\end{table}}
It would be interesting to find a $\Z$-graded contragredient Lie superalgebra in correspondence with the Lie algebra $\mathrm{E_6}$ of Table \ref{tabellina} and to look for applications to supergravity theories.
We remark that it cannot be a finite-dimensional or (twisted) affine Lie superalgebra, as it follows from the classification of contragredient Lie superalgebras
of finite Gelfand-Kirillov growth, see \cite{vdl, HS}. Dynkin diagrams with shape $\mathrm{E}_6$ appeared in the context of almost affine Kac-Moody Lie superalgebras in \cite{Leites}
but no extensive review of their properties is known to us.


\begin{thebibliography}{99}
\bibitem[1]{AC} D.\ V.\ Alekseevsky and V.\ Cort\'es, {\it Classification of $N$-(super)-extended Poincar\'e algebras and bilinear invariants of the spinor representation of $\mathrm{Spin}(p,q)$}, Comm. Math. Phys. {\bf 183} (1997), 477--510. 
\bibitem[2]{AS} D.\ V.\ Alekseevsky and A.\ Spiro, {\it Prolongations of Tanaka structures and regular CR structures}, Selected Topics in Cauchy-Riemann Geometry (ed.\ S.\ Dragomir), (2001), 3--37.
\bibitem[3]{AlSa} A.\ Altomani and A.\ Santi, {\it Tanaka structures modeled on extended Poincar\'e algebras}, Indiana Univ. Math. Journal {\bf 63} (2014), 91--117.
\bibitem[4]{BGL} S.\ Bouarroudj, P.\ Grozman, D.\ Leites, {\it Classification of finite dimensional modular Lie superalgebras with indecomposable Cartan matrix}, 
Symmetry Integrability Geom. Methods Appl. {\bf 5} (2009), 63 pp. 
\bibitem[5]{BGLS} S.\ Bouarroudj, P.\ Grozman, D.\ Leites, I.\ Shchepochkina, {\it Minkowski superspaces and superstrings as almost real-complex supermanifolds}, Theoret. and Math. Phys. {\bf 173} (2012), 1687--1708.
\bibitem[6]{Kac06} N.\ Cantarini, V.\ Kac, {\it Infinite-dimensional primitive linearly compact Lie superalgebras}, Adv. Math. {\bf 207} (2006), 328--419.
\bibitem[7]{CantKac} N.\ Cantarini, V.\ Kac, {\it Automorphisms and forms of simple infinite-dimensional linearly compact Lie superalgebras}, Int. J. Geom. Methods Mod. Phys. {\bf 3} (2006), 845--867.
\bibitem[8]{Leites} D.\ Chapovalov, M.\ Chapovalov, A.\ Lebedev, D.\ Leites, {\it The classification of almost affine (hyperbolic) Lie superalgebras}, J. Nonlinear Math. Phys. {\bf 17} (2010), 103--161.
\bibitem[9]{Cheng1995} S.-J.\ Cheng, {\it Differentiably simple Lie superalgebras and representations of semisimple Lie superalgebras}, J. Algebra {\bf 173} (1995), 1--43. 
\bibitem[10]{CKadd} S.-J. Cheng, V.\ Kac, {\it Generalized Spencer cohomology and filtered deformations of $\Z$-graded Lie superalgebras}, Adv. Theor. Math. Phys. {\bf 2} (1998), 1141--1182. Addendum: 
Adv. Theor. Math. Phys. {\bf 8} (2004), 697--709.
\bibitem[11]{CK99} S.-J.\ Cheng, V.\ Kac, {\it Structure of some $\Z$-graded Lie superalgebras of vector fields}, Transform. Groups {\bf 4} (1999), 219--272. Errata: N.\ Cantarini, S.-J.\ Cheng, V.\ Kac, Transform. Groups {\bf 9} (2004), 399--400.
\bibitem[12]{DeligneS} P.\ Deligne, {\it Notes on spinors}, in {\it Quantum fields and strings: a course for mathematicians. Vol.
1}, AMS, Providence, 1999.
\bibitem[13]{QFS} P.\ Deligne, J.\ Morgan, {\it Notes on supersymmetry (following Joseph Bernstein)}, in {\it Quantum fields and strings: a course for mathematicians. Vol.
1}, AMS, Providence, 1999.
\bibitem[14]{FP} D.\ Z.\ Freedman, A.\ Van Proeyen, {\it Supergravity}, Cambridge University Press, 2012.
\bibitem[15]{Gal} A.\ Galaev, {\it Irreducible complex skew-Berger algebras}, Diff. Geom. Appl. {\bf 27} (2009), 743--754.
\bibitem[16]{GL} P.\ Grozman, D.\ Leites, {\it An unconventional supergravity}, in 
{\it Noncommutative structures in mathematics and physics (Kiev, 2000)} (eds.\ S.\ Duplij and J.\ Wess), Proc. NATO,
Kluwer 2001, pp. 41--47.
\bibitem[17]{Gui2} V.\ Guillemin, {\it Infinite dimensional primitive Lie algebras}, J. Differential Geometry {\bf 4} (1970), 257--282 
\bibitem[18]{GS} V.\ Guillemin, S.\ Sternberg, {\it An algebraic model of transitive differential geometry}, Bull. Amer. Math. Soc. {\bf 70} (1964), 16--47.
\bibitem[19]{HS} C.\ Hoyt, V.\ Serganova, {\it Classification of finite-growth general Kac-Moody superalgebras}, Comm. Algebra {\bf 35} (2007), 851--874. 
\bibitem[20]{Kac77a} V.\ Kac, {\it Lie superalgebras}, Advances in Math. {\bf 26} (1977), 8--96.
\bibitem[21]{Kac77b} V.\ Kac, {\it A sketch of Lie superalgebra theory}, Comm. Math. Phys. {\bf 53} (1977), 31--64.
\bibitem[22]{Kac77g} V.\ Kac, {\it Classification of simple $\Z$-graded Lie superalgebras and simple Jordan superalgebras}, Comm. Algebra {\bf 5} (1977), 1375--1400. Erratum: L.\ Hogben, V.\ Kac, Comm. Algebra {\bf 11} (1983), 1155--1156. 
\bibitem[23]{Kac98} V.\ Kac, {\it Classification of infinite-dimensional simple linearly compact Lie superalgebras}, Adv. Math. {\bf 139} (1998), 1--55.
\bibitem[24]{LM} H.\ B.\ Lawson, jr. and M.-L.\ Michelson, {\it Spin Geometry}, Princeton Univ. Press, 1989. 
\bibitem[25]{vdl} J.\ W.\ van de Leur, {\it A classification of contragredient Lie superalgebras of finite growth}, Comm. Algebra {\bf 17} (1989), 1815--1841. 
\bibitem[26]{L} J.\ Lott, {\it Torsion constraints in supergeometry}, Comm. Math. Phys. {\bf 133}  no. 3 (1990), 563--615.
\bibitem[27]{Ma} Y.\ Manin, {\it Gauge field theory and complex geometry}, Second edition, Springer-Verlag, Berlin, 1997.
\bibitem[28]{MN} C.\ Medori, M.\ Nacinovich, {\it Levi-Tanaka algebras and homogeneous CR manifolds}, Compositio Math. {\bf 109} (1997), 195--250.
\bibitem[29]{Nahm} W.\ Nahm, {\it Supersymmetries and their representations}, Nuclear Phys. B {\bf 135} (1977), 149.
\bibitem[30]{RKS} A.\ Roslyi, O.\ Khudaverdyan, A.\ Schwarz, {\it Supersymmetry and Complex Geometry},
Encyclopaedia of Mathematical Sciences - Several Complex Variables III
(ed. G. M. Khenkin), Springer, 1994.
\bibitem [31]{RS1} A.\ A.\ Rosly\v\i, A. S. Schwarz, {\it Geometry of $N = 1$ Supergravity},
Comm. Math. Phys. {\bf 95} (1984), 161--184.
\bibitem [32]{RS2} A. A. Rosly\v\i, A. S. Schwarz, {\it Geometry of $N = 1$ Supergravity (II)},
Comm. Math. Phys. {\bf 96} (1984), 285--309.
\bibitem[33]{SaHom} A.\ Santi, {\it Superization of homogeneous spin manifolds and geometry of homogeneous supermanifolds}, Abh. Math. Semin. Univ. Hambg. {\bf 80} (2010), 87--144. 
\bibitem[34]{SaS1} A.\ Santi and A.\ Spiro, {\it  Super Poincar\'e algebras, space-times and supergravities (I)}, Adv. Theor.
Math. Phys. {\bf 16} (2012), 1411--1441.
\bibitem[35]{SaS2} A.\ Santi and A.\ Spiro, {\it  Super Poincar\'e algebras, space-times and supergravities (II)}, J. Math. Phys. {\bf 53}, 032505 (2012).
\bibitem[36]{Schepo} I.\ Shchepochkina, {\it How to realize a Lie algebra by vector fields},  Theoret. and Math. Phys. {\bf 147} (2006), 821--838.
\bibitem[37]{Sche} M.\ Scheunert, {\it The theory of Lie superalgebras. An introduction.}, Lecture Notes in Mathematics {\bf 716}, Springer,  Berlin, 1979.
\bibitem[38]{Serga} V.\ Serganova, {\it Automorphisms of simple Lie superalgebras}, Math. USSR-Izv. {\bf 24} (1985), 539--551.
\bibitem[39]{N2} N.\ Tanaka, {\it On differential systems, graded Lie algebras and pseudogroups}, J.
Math. Kyoto Univ. {\bf 10} (1970), 1--82.
\bibitem[40]{Weis} B.\ Y. Weisfeiler, {\it Infinite-dimensional filtered Lie algebras and their connections with graded Lie algebras}, Funct. Anal. Appl. {\bf 2} (1968), 88--89.
\end{thebibliography}
\end{document}